\newcommand{\ve}{\varepsilon}
\newtheoremstyle{nonitalic}  
{3pt}                      
{3pt}                      
{\normalfont}              
{}                         
{\bfseries}                
{.}                        
{ }                        
{}                         
\theoremstyle{nonitalic}
\newtheorem{thm}{Theorem}[section]
\newtheorem{lem}[thm]{Lemma}
\newtheorem{coro}[thm]{Corollary}
\newtheorem{conj}[thm]{Conjecture}
\newtheorem{claim}[thm]{Claim}
\newtheorem{LEM}[thm]{\textbf{Lemma}}
\theoremstyle{plain}
\theoremstyle{plain}
\theoremstyle{plain}
\newtheorem{fact}{Fact}
\DeclareMathOperator{\la}{la}
\DeclareMathOperator{\df}{def}
\begin{document}
\title{Linear arboricity of robust expanders}
\author{Yuping Gao\footnote{School of Mathematics and Statistics, Lanzhou University, Lanzhou 730000, China.}
	\qquad
	Songling Shan\footnote{Auburn University, Department of Mathematics and Statistics, Auburn, AL 36849, USA.
		}
}

\date{\today}
\maketitle

\vspace{-1cm}

\begin{abstract}  In 1980, Akiyama, Exoo, and  Harary conjectured that any graph $G$ can be decomposed into at most  $\lceil(\Delta(G)+1)/2\rceil$ linear forests. We confirm the conjecture for  robust expanders of linear minimum degree.
	As a consequence, the conjecture holds for dense quasirandom graphs of linear minimum degree
	as well as for large $n$-vertex graphs with minimum degree arbitrarily close to $n/2$ from above.

\medskip

\noindent {\textbf{Keywords}: Linear forest; linear arboricity; robust expander; Hamilton decomposition}
\end{abstract}

\section{Introduction}
Unless otherwise stated explicitly, graphs in this paper are simple and finite.
 A \emph{linear forest} is a graph consisting of vertex-disjoint paths. The  \emph{linear arboricity} of a graph  $G$,
denoted $\la(G)$,  is the smallest number of linear forests its edges can be partitioned into.
This notion was introduced by Harary~\cite{MR0263677} in 1970 as one of the covering
invariants of graphs, and has been studied quite extensively since then.
The linear arboricity of any graph $G$ of maximum degree $\Delta$ is known to be at least
$\lceil \Delta/2\rceil$  as the edges incident with a maximum degree vertex need to be covered
by at least $\lceil \Delta/2\rceil$ distinct linear forests.
On the other hand,  $\la(G)$ is conjectured to be at most $\lceil (\Delta+1)/2\rceil$.
The  conjecture,
known as the \emph{Linear Arboricity Conjecture}, was proposed by Akiyama, Exoo and Harary~\cite{AEH1980} in 1980.
\begin{conj}[Linear Arboricity Conjecture]\label{laC}
	Every graph $G$ satisfies $\la(G) \le \lceil (\Delta(G)+1)/2\rceil$.
\end{conj}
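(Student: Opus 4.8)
The plan is to prove the conjecture for the class promised in the abstract, i.e.\ for every $n$-vertex robust $(\nu,\tau)$-expander $G$ with $\delta(G)\ge\alpha n$ and $n$ large, by reducing to the regular case and then feeding the regular graph into a Hamilton decomposition. Write $\Delta=\Delta(G)$. Since a Hamilton cycle is $2$-regular, only regular graphs admit Hamilton decompositions, so the first move is a regularisation. I would embed $G$ into an exactly $\Delta$-regular graph $G^{\ast}$ on $N=O(n)$ vertices by adjoining a bounded number of disjoint copies of $G$ together with a ``deficiency'' graph in which each vertex $v$ receives exactly $\Delta-d_G(v)$ new edges (the degree sequences on the copies coincide, and realisability of such a filling is a Gale--Ryser computation). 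I would arrange $N>2\Delta$, so that in addition $E(G)\subseteq E(G^{\ast})$. The point of this embedding is that any partition of $E(G^{\ast})$ into linear forests restricts --- by keeping only the edges of $G$ --- to a partition of $E(G)$ into no more linear forests, because a subgraph of a linear forest is again a linear forest. Hence it suffices to prove $\la(G^{\ast})\le\lceil(\Delta+1)/2\rceil$.

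For the regular graph $G^{\ast}$ I would invoke the Hamilton decomposition theorem for regular robust expanders of linear degree: since $\Delta\ge\alpha n$ is linear in $N$ and $G^{\ast}$ is a robust expander, $G^{\ast}$ decomposes into $k=\lfloor\Delta/2\rfloor$ edge-disjoint Hamilton cycles $C_1,\dots,C_k$, together with a perfect matching $M$ when $\Delta$ is odd (here $N$ is even, so $M$ exists). To convert this into linear forests I would delete one edge $e_i$ from each $C_i$, so that each $C_i-e_i$ is a Hamilton path, hence a linear forest; this accounts for $k$ forests. It then remains to pack $\{e_1,\dots,e_k\}$ (in the even case) or $M\cup\{e_1,\dots,e_k\}$ (in the odd case) into one further linear forest, which yields $k+1=\lceil(\Delta+1)/2\rceil$ forests in total. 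I would select the $e_i$ greedily, cycle by cycle: at step $i$ the already chosen edges forbid only $O(i)$ vertices (those of degree two) and close only $O(i)$ potential cycles, so fewer than $N$ of the $N$ edges of $C_i$ are forbidden; since $i\le k<N/4$, a valid $e_i$ always survives. The inequality $\Delta<N/2$ built into the embedding is exactly what makes this transversal selection go through, and it sidesteps the genuinely dense regime, where for $\Delta$ near $N-1$ the edge budget $(k+1)(N-1)$ is tight and such a simple-minded packing need not exist.

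The main obstacle I anticipate is the regularisation step, \emph{not} the combinatorial edge selection. One must produce a $G^{\ast}$ that is simultaneously exactly $\Delta$-regular and a robust $(\nu',\tau')$-expander of linear degree, and the deficiencies $\Delta-d_G(v)$ can be as large as $\Delta-\delta(G)$, so the attached deficiency graph may be dense and could in principle destroy expansion. To control this I would not take an arbitrary realisation of the deficiency sequence, but superpose it on a sparse bipartite expander skeleton (or choose it pseudorandomly) and then verify the robust-expansion inequality $|RN_{\nu'}(S)|\ge|S|+\nu'N$ directly for every $S$ with $\tau'N\le|S|\le(1-\tau')N$, splitting $S$ according to how it meets the several copies and using the expansion of $G$ on the dominant part. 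The residual bookkeeping --- confirming that the parameter hierarchy $\alpha\gg\nu\gg\tau\gg 1/n$ survives the passage to $G^{\ast}$, that $N$ is large enough to apply the decomposition theorem, and the trivial parity and divisibility conditions ($\Delta N$ even, $N$ even when $\Delta$ is odd) --- is then routine once the robust expansion of $G^{\ast}$ has been secured.
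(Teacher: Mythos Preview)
You are targeting the paper's Theorem~\ref{thm:expander} (the conjecture itself remains open), and your regularisation step contains a genuine gap that the paper explicitly anticipates. In any $\Delta$-regular supergraph $G^\ast\supseteq G$, every vertex of $G$ already at degree $\Delta$ receives \emph{no} new edges, because its deficiency is zero; this is forced, independent of how cleverly the deficiency graph is chosen, so an expander skeleton or a pseudorandom filling cannot touch these vertices. If you take $k\ge 2$ disjoint copies of $G$ and fill deficiencies, the maximum-degree vertices of a single copy form a set $S$ (one may assume $|S|>n/2$ by Lemma~\ref{VDelta}) with $\tau' N\le |S|\le (1-\tau')N$, yet every vertex outside that copy has zero neighbours in $S$; hence $RN_{\nu',G^\ast}(S)$ is confined to that copy and $|RN_{\nu',G^\ast}(S)|\le n<|S|+\nu' N$. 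The same obstruction arises with a single copy plus added vertices: the paper's example immediately after Theorem~\ref{thm:expander}, built from $K_{n-3}$ and $K_3$, is a robust expander all of whose $\Delta$-regular supergraphs have a vertex cut of size~$3$ and hence fail robust expansion. Your proposed fix cannot help, because the obstruction lives precisely at the zero-deficiency vertices, which are invisible to any deficiency graph.

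The paper's route is accordingly much more intricate. It does not regularise $G$ in one shot; instead it first strips off a carefully designed family of spanning linear forests (Hamilton paths with prescribed endpoints, extracted via Theorem~\ref{thm:hamilton2}) so as to shrink the gap $\Delta(G)-\delta(G)$ while preserving robust expansion at every step, and then handles three structural cases separately (Theorem~\ref{thm:thm-reduced}) according to how the degrees are distributed. Only in one of those cases are new vertices adjoined, and there the paper controls their number and adjacencies via Lemma~\ref{graphic} and verifies the robust-expansion inequality for the enlarged graph by a direct case analysis over all admissible $S$. The ``routine bookkeeping'' you allude to at the end is in fact the entire content of the argument.
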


The conjecture   would determine the linear arboricity exactly for graphs of odd maximum degree,  since in that case the upper and lower bounds coincide.  For graphs of even maximum degree it would imply that the linear arboricity must be one of only two possible values. However, determining the exact value among these two choices is NP-complete~\cite{MR0815871}.

In terms  of general bounds on $\la(G)$
for   graphs $G$ with maximum degree $\Delta$, the first result of this type was obtained by
Alon in 1988~\cite{MR0955135}, which states that $\la(G) \le \Delta/2+O\left(\frac{\Delta\log \log \Delta}{\log \Delta}\right)$.
The error term was improved to $O(\Delta^{2/3} \log^{1/3} \Delta)$ by Alon and Spencer in 1992~\cite{MR1140703},
and  has been further improved to $O(\Delta^{0.661})$ by Ferber, Fox and Jain~\cite{MR4074176},
and then to $O(\Delta^{1/2}\log^4\Delta)$ by Lang and Postle~\cite{MR4630467}.
Very recently,  Christoph, Dragani\'c, Gir\~ao, Hurley, Michel, and M\"uyesser~\cite{2507.20500} made a significant
improvement by reducing
the error term to $O(\log n)$, which is an exponential improvement over the previous best error term
if $\Delta =\Omega(n^\ve)$ for any given $0<\ve  \le 1$.
The Linear Arboricity Conjecture was well studied for classes of graphs including degenerate graphs, graphs with small maximum degree, and planar graphs.
For example, see~\cite{CH2021,CHKLW2012, EP1984,G1986,NZ2019,WWLC2014,W1999,YWS2022}.
The conjecture was confirmed for random graphs $G_{n,p}$ with constant  probability $p$ by Glock, K\"uhn, and Osthus~\cite{GKO2016} in 2016, and for the range
$\frac{C\log n}{n}\le p \le n^{-2/3}$ by Dragani\'c, Glock, Munh\'{a} Correia, and Sudakov in 2025, where $C$ is a constant~\cite{DGMS2025}. Glock, K\"uhn, and Osthus~\cite{GKO2016} also verified the conjecture for dense quasirandom graphs in which the difference between the maximum and minimum degrees is small.
Here, we use the following notion of quasirandomness, which is a one-sided version of $\varepsilon$-regularity. Let  $0<\ve, p<1$.
A graph $G$ on $n$ vertices is called \emph{lower-$(p,\ve)$-regular} if $e_G(S,T) \ge (p-\ve)|S||T|$  for all disjoint $S,T\subseteq V(G)$ with $|S|, |T| \ge \ve n$.

The Linear Arboricity Conjecture holds for regular robust expanders of linear minimum degree, which we define now.
Given $0 <\nu \leq \tau< 1$, we say that a graph $G$ on $n$ vertices is a \emph{robust $(\nu, \tau)$-expander},
if for all $S\subseteq V (G)$ with $\tau n\leq |S|\leq (1-\tau)n$ the number of vertices that have at least $\nu n$
neighbors in $S$ is at least $|S| + \nu n$. The \emph{$\nu$-robust neighborhood} $RN_{G}(S)$ is the set of all
those vertices of $G$ which have at least $\nu n$ neighbors in $S$.
Note that for
$0 < \nu' \le \nu \le \tau \le \tau'<1$, any robust $(\nu,\tau)$-expander is also a robust $(\nu',\tau')$-expander.
The theorem  below by K\"uhn and Osthus~\cite{KO2014}
is   profound and also is a powerful tool that has been used to establish other results.

\begin{thm}[\cite{KO2014}]\label{thm2.13} For every $\alpha>0$ there exists $\tau=\tau(\alpha)> 0$ such that for every $\nu> 0$ there
	exists an integer $N_0=N_0(\alpha, \nu, \tau)$ for which the following holds. Suppose that
	\begin{enumerate}[\rm{(i)}]
		\item [\rm{(i)}] $G$ is an $r$-regular graph on $n\geq N_0$ vertices, where $r\geq \alpha n$ is even;
		
		\item [\rm{(ii)}] $G$ is a robust $(\nu, \tau)$-expander.
	\end{enumerate}
	Then $G$ has a Hamilton decomposition. Moreover, this decomposition can be found in
	polynomial time in $n$.
\end{thm}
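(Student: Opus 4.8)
The plan is to prove this via the two-phase strategy that underlies most exact decomposition results of this flavour: first cover almost all edges by edge-disjoint Hamilton cycles (an \emph{approximate decomposition}), then absorb the sparse leftover into Hamilton cycles using a structure reserved in advance. Since $G$ is $r$-regular with $r$ even, a Hamilton decomposition is precisely a partition of $E(G)$ into $r/2$ edge-disjoint Hamilton cycles, so every edge must be used and the degree bookkeeping carries no slack; this is what makes the completion step genuinely hard.

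For the approximate decomposition, I would exploit the fact that regular robust expanders of linear degree are extremely rich in Hamilton cycles, together with the stability of robust expansion under sparse perturbations: deleting a bounded-degree subgraph changes each robust neighbourhood count by a negligible amount relative to $\nu n$. Starting from $G$ one extracts Hamilton cycles and deletes their edges, the graph remaining a robust expander of high minimum degree throughout, until the uncovered edges form a graph $H$ of maximum degree at most $\eta n$ for a suitably small constant $\eta$. To keep the expansion hypotheses valid down to a genuinely sparse remainder one in practice invokes a dedicated approximate-decomposition result rather than naive iteration, but the effect is the same.

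The heart of the argument is the completion phase, and the key is to reserve a sparse spanning \emph{robustly decomposable graph} $G^{\mathrm{rob}}\subseteq G$ \emph{before} the approximate decomposition is carried out. The defining property of $G^{\mathrm{rob}}$ is that for every sufficiently sparse $H$ of the correct degrees, edge-disjoint from $G^{\mathrm{rob}}$, the union $G^{\mathrm{rob}}\cup H$ admits a Hamilton decomposition. One then runs the approximate decomposition on $G\setminus G^{\mathrm{rob}}$, obtains a sparse leftover $H$, and applies the robust decomposition lemma to $G^{\mathrm{rob}}\cup H$; combined with the Hamilton cycles produced in the first phase this decomposes all of $G$. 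Constructing $G^{\mathrm{rob}}$ is where robust expansion is used in full strength: one partitions $V(G)$ into clusters, builds a system of closed \emph{universal walks} that traverse the clusters in a perfectly balanced way, and uses short \emph{chord sequences} to reroute the sparse leftover so that it threads into spanning cycles with no edge left behind.

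The main obstacle is unquestionably the construction and verification of $G^{\mathrm{rob}}$, i.e. the robust decomposition lemma itself: arranging that a fixed sparse structure can absorb an \emph{arbitrary} admissible sparse remainder while hitting every vertex exactly the right number of times is delicate, and the degree- and parity-bookkeeping (ensuring $G^{\mathrm{rob}}\cup H$ is regular of the right degree, with $r/2$ cycles in total) must be tracked exactly rather than approximately. The polynomial-time claim then follows by checking that each ingredient is algorithmic: a single Hamilton cycle in a robust expander can be found in polynomial time, the cluster partition and universal walks are explicit, and the absorption is a bounded rerouting procedure.
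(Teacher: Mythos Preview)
This theorem is not proved in the paper at all: it is quoted from K\"uhn and Osthus~\cite{KO2014} and used as a black box throughout (see the header \verb|\begin{thm}[\cite{KO2014}]|). The authors invoke it to obtain Hamilton decompositions of various regular robust expanders constructed in the course of proving Theorem~\ref{thm:expander} and Theorem~\ref{thm:thm-reduced}, but they never reprove it.

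Your outline is a fair high-level summary of the actual K\"uhn--Osthus argument in~\cite{KO2014}: reserve a sparse robustly decomposable subgraph $G^{\mathrm{rob}}$, run an approximate Hamilton decomposition on the rest, then absorb the leftover via the robust decomposition lemma. So the sketch is not wrong as a description of the original proof, but there is no comparison to be made with the present paper, which simply cites the result.
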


As an application of Theorem~\ref{thm2.13}, the Linear Arboricity Conjecture holds for regular robust expanders of linear minimum degree,
as stated below. The proof follows exactly the same proof of Theorem~\ref{JCTB2016} in~\cite{GKO2016}, and we omit  it  here.

\begin{thm} \label{cor:regular-expander}  Let $n,r\in \mathbb{N}$ and suppose that $0<1/n \ll \nu  \le  \tau,  \alpha <1$.
	If $G$ is a robust $r$-regular  $(\nu,\tau)$-expander on $n$ vertices with $r \ge \alpha n$, then   $\la(G)\leq \lceil\frac{r+1}{2}\rceil$.
\end{thm}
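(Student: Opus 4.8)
The plan is to deduce the theorem from the Hamilton decomposition theorem (Theorem~\ref{thm2.13}), exactly as in~\cite{GKO2016}, by first producing a decomposition of (almost all of) $G$ into Hamilton cycles and then repackaging these cycles into the required number of linear forests. Since the target value depends on the parity of $r$, I would split into two cases. If $r$ is even, then $G$ already meets the hypotheses of Theorem~\ref{thm2.13}: it is $r$-regular with $r\ge \alpha n$ even, it is a robust $(\nu,\tau)$-expander (after replacing $(\nu,\tau)$ by suitable parameters via the monotonicity remark, so that $\tau\le\tau(\alpha)$), and $n$ is large because $1/n\ll\nu\le\tau,\alpha$; hence $G$ decomposes into $r/2$ Hamilton cycles. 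If $r$ is odd, the handshake lemma forces $n$ to be even, and I would first remove a perfect matching $M$: a robust expander of linear minimum degree is Hamiltonian, and a Hamilton cycle on an even number of vertices contains a perfect matching, so $M$ exists; deleting the $n/2$ edges of a matching perturbs every degree and neighborhood by at most one, so $G-M$ is still a robust $(\nu',\tau')$-expander, now $(r-1)$-regular with $r-1$ even. Applying Theorem~\ref{thm2.13} to $G-M$ yields $(r-1)/2$ Hamilton cycles.

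It then remains to convert the resulting family of cycles into at most $\lceil (r+1)/2\rceil$ linear forests. Write $k=\lfloor r/2\rfloor$, so that in both cases we obtain $k$ edge-disjoint Hamilton cycles $C_1,\dots,C_k$ (together with the matching $M$ when $r$ is odd), and the target is $k+1$ linear forests. The basic idea is that deleting a single edge $e_i$ from each $C_i$ turns it into a Hamilton path, which is itself a linear forest; as no two spanning paths can share a forest, this already accounts for $k$ separate forests. The deleted edges $e_1,\dots,e_k$ (and, in the odd case, the matching $M$) must then be assembled into the one remaining forest. Thus the whole problem reduces to choosing the breakpoints $e_i\in E(C_i)$, one per cycle, so that $\{e_1,\dots,e_k\}$ (respectively $\{e_1,\dots,e_k\}\cup M$) has maximum degree at most $2$ and contains no cycle, i.e.\ so that this leftover set is itself a linear forest.

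The hard part will be precisely this last repackaging step, which amounts to the Linear Arboricity Conjecture for the $r$-regular graph at hand and so cannot be entirely routine. When $r$ is bounded well away from $n$ the breakpoints can be chosen greedily: after selecting $e_1,\dots,e_{i-1}$ the current leftover forest has at most $i-1$ edges and therefore blocks only $O(i)$ of the $n$ edges of $C_i$ (those meeting a vertex of degree two, or closing a path into a cycle), so an admissible $e_i$ survives as long as $i\ll n$. In the genuinely dense regime $r=\Theta(n)$ this counting breaks down, and I would instead establish the existence of the required rainbow linear forest by a global argument, for instance a switching/absorption scheme that begins from a near-admissible choice and locally reroutes the few vertices of degree three or more, using the robust expansion of $G$ to guarantee the exchanges; this is the point at which the expander hypothesis is needed beyond its use in Theorem~\ref{thm2.13}. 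In the odd case one must additionally choose the $e_i$ compatibly with $M$, avoiding degree-three vertices and cycles formed together with the matching edges, and here the edge count $k+n/2\le n-1$ leaves just enough room. Completing this repackaging produces the $k+1=\lceil (r+1)/2\rceil$ linear forests and proves the theorem.
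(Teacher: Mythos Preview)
Your overall strategy --- Hamilton-decompose via Theorem~\ref{thm2.13} and then repackage cycles into linear forests --- is exactly what the paper (following~\cite{GKO2016}) does. However, your assessment of the repackaging step is off in both parity cases, and in the odd case this leaves a genuine gap.

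For $r$ even, you claim the greedy choice of breakpoints $e_i\in C_i$ breaks down once $r=\Theta(n)$ and propose an ad hoc switching scheme. In fact greedy succeeds for every $r\le n-1$: if $F_{i-1}=\{e_1,\dots,e_{i-1}\}$ is a linear forest with $c\ge1$ nontrivial path components, it has exactly $i-1-c$ vertices of degree~$2$, and the only other forbidden edges of $C_i$ are the at most $c$ edges joining the two ends of some $F_{i-1}$-path. Hence at most $2(i-1-c)+c=2(i-1)-c\le 2i-3$ edges of $C_i$ are forbidden, leaving at least $n-2i+3\ge n-r+3>0$ admissible choices. No expander hypothesis is needed here beyond Theorem~\ref{thm2.13}.

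For $r$ odd, on the other hand, your plan does not close. After deleting a perfect matching $M$, the leftover $M\cup\{e_1,\dots,e_k\}$ must be a linear forest; since $M$ is perfect, every vertex already has degree~$1$, so the $e_i$ must form a matching avoiding all previously used vertices, and the corresponding count degrades to roughly $n/2-3(i-1)$ admissible edges, which can vanish once $r>n/3$. Your ``switching/absorption scheme using robust expansion'' is a hope, not an argument. The clean device in~\cite{GKO2016} goes in the opposite direction: rather than removing a matching, one \emph{adds} a new vertex~$x$ together with a matching $M'\subseteq E(\overline G)$ of size $(n-r-1)/2$ (which exists since $\overline G$ is $(n-1-r)$-regular with small deficiency; compare the argument of Lemma~\ref{lem:matching-in-complement}), joining $x$ to the $r+1$ vertices missed by~$M'$. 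The resulting graph $G'$ is $(r+1)$-regular on $n+1$ vertices and, by Lemma~\ref{lem:stability-expansion}, still a robust expander; Theorem~\ref{thm2.13} now gives $(r+1)/2$ Hamilton cycles of $G'$. Each such cycle passes through~$x$, so deleting $x$ and the $M'$-edges from each cycle already yields a linear forest of $G$ --- no separate repackaging step is needed.
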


Using Theorem~\ref{thm2.13},
Glock, K\"uhn, and Osthus~\cite{GKO2016} in 2016 also proved the Linear Arboricity Conjecture
for regular graphs of large degree.

\begin{thm}[\cite{GKO2016}]\label{JCTB2016}
	There exists  $n_0\in \mathbb{N}$   such that every $d$-regular graph $G$ on $n \ge n_0$ vertices with $d\geq \left\lfloor\frac{n-1}{2}\right\rfloor$ has a decomposition into $\left\lceil\frac{d+1}{2}\right\rceil$ linear forests.
\end{thm}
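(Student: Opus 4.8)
The plan is to reduce the linear arboricity bound to a Hamilton-type decomposition of $G$ and then convert that decomposition into linear forests at the cost of a single extra forest. Write $d=2k$ or $d=2k+1$; the target is $\ceil{(d+1)/2}=k+1$ linear forests, and the natural source of these forests is a decomposition of $G$ into $k$ Hamilton cycles (when $d=2k$) or into $k$ Hamilton cycles plus one perfect matching (when $d=2k+1$, which forces $n$ to be even). Indeed, deleting one edge from a Hamilton cycle turns it into a spanning path, i.e.\ a linear forest, so $k$ Hamilton cycles give $k$ linear forests together with $k$ leftover edges; the scheme succeeds provided these leftover edges (together with the perfect matching in the odd case) can be assembled into one further linear forest. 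Thus the theorem follows from two ingredients: (a) the existence of the near-Hamilton decomposition, and (b) a conversion lemma turning it into $k+1$ linear forests.

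For ingredient (a) I would exploit that $d\ge \floor{(n-1)/2}$ forces very high minimum degree, so $G$ is connected and, in the generic case, a robust expander. If $G$ is a robust $(\nu,\tau)$-expander, then when $d$ is even, Theorem~\ref{thm2.13} applied to $G$ (with $\alpha$ any constant below $1/2$) directly yields $k=d/2$ Hamilton cycles; when $d$ is odd I would first remove a perfect matching $M$ (which exists by the high minimum degree) to obtain a $(d-1)$-regular graph $G'$, observe that deleting one $1$-factor changes each relevant degree by at most one and so $G'$ is still a robust $(\nu',\tau)$-expander for $\nu'$ slightly below $\nu$, and then apply Theorem~\ref{thm2.13} to $G'$ to obtain $k=(d-1)/2$ Hamilton cycles. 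This is exactly the expander mechanism underlying Theorem~\ref{cor:regular-expander}.

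The graphs with $d\approx n/2$ that fail to be robust expanders are a genuine obstacle, and they must be isolated by a stability argument: a $d$-regular graph of minimum degree close to $n/2$ that is not a robust expander is forced to be close to one of a bounded list of extremal configurations --- essentially a balanced complete bipartite graph or a (nearly) disjoint union of two cliques. The graphs $K_{n/2,n/2}$ and $2K_{n/2}$ show that such configurations genuinely occur in this degree range and genuinely fail robust expansion. For each extremal type I would argue directly, reducing to the linear arboricity of complete and complete bipartite graphs, for which the Linear Arboricity Conjecture (Conjecture~\ref{laC}) is already known to hold with equality, and absorbing the sparse correction between $G$ and the exact extremal model into $O(1)$ of the linear forests.

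The main difficulty, however, is ingredient (b) in the dense regime. When $d$ is close to $n-1$ (so $k\approx n/2$), the $k$ leftover edges --- one per Hamilton cycle --- cannot be chosen greedily to form a single linear forest, since the number of forbidden edges at each step is comparable to $n$, and the naive greedy selection only works for $k=o(n)$. I would instead select the broken edge from each cycle by a global argument using the Hamilton structure: the aim is a rainbow linear forest picking one edge from each $C_i$ with no vertex of degree three and no closed cycle, obtained by a defect/alteration argument (or, for the truly extremal dense graphs, by the known optimal decompositions of $K_n$ and $K_{m,m}$, where $\la$ attains exactly $k+1$). The edge count is tight here, namely $(k+1)(n-1)\ge dn/2$ with near-equality when $d=n-1$, and it is this step, rather than the Hamilton decomposition itself, that I expect to be the crux of the proof.
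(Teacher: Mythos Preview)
The paper does not prove Theorem~\ref{JCTB2016}; it is quoted from~\cite{GKO2016}, and the only indication of the argument is the remark that the (omitted) proof of Theorem~\ref{cor:regular-expander} ``follows exactly the same proof of Theorem~\ref{JCTB2016} in~\cite{GKO2016}.'' So there is no proof in the present paper to compare against directly.

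Your high-level plan --- robust expansion from the degree condition, then Theorem~\ref{thm2.13} for a Hamilton decomposition, then a stability analysis for the non-expanding extremal configurations near $K_{n/2,n/2}$ and $2K_{n/2}$ --- is indeed the structure of the argument in~\cite{GKO2016}.

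However, you misjudge ingredient~(b) in the even case. If $d=2k$ and $C_1,\dots,C_k$ is a Hamilton decomposition, then the greedy choice of $e_i\in C_i$ keeping $\{e_1,\dots,e_i\}$ a linear forest \emph{does} succeed for all $d\le n-1$: at step $i$ the forbidden edges in $C_i$ are those touching an internal (degree-$2$) vertex of the current forest or closing one of its $p\ge 1$ paths, and there are at most $2\bigl((i-1)-p\bigr)+p=2(i-1)-p\le 2k-3=d-3<n$ of them. So at least $n-d+3\ge 4$ choices remain at every step, and the even-$d$ conversion is routine, not the crux.

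The genuine difficulty you sense lives in the odd case: when $d$ is odd one must merge the $k=(d-1)/2$ leftover edges with the removed perfect matching $M$ into a single linear forest, and here the analogous greedy count does fail once $d$ exceeds roughly $n/3$, since $M$ already saturates every vertex and the forbidden count acquires an additive $n/2$ term. Resolving this for robust expanders, together with the separate direct arguments for the two extremal families at $d=\lfloor(n-1)/2\rfloor$, is where the real content of~\cite{GKO2016} lies. Your sketch correctly flags these as the crux but conflates them with the even case, which is not an obstacle.
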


In this paper, we investigate the  Linear Arboricity Conjecture   for robust expanders of
linear minimum degree that are not necessarily regular and  obtain the following result.

\begin{thm}\label{thm:expander}
	For every $\alpha>0$ there exists $\tau=\tau(\alpha)> 0$ such that for every $\nu> 0$ there
	exists an integer $N_0=N_0(\alpha, \nu, \tau)$ for which the following holds. If
	$G$ is a   robust $(\nu, \tau)$-expander on $n\geq N_0$ vertices with $\delta(G)\geq \alpha n$,
	then  $\la(G)\leq \lceil\frac{\Delta(G)+1}{2}\rceil$.
\end{thm}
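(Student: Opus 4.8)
The plan is to reduce to the regular case of Theorem~\ref{thm2.13} by embedding $G$ into a regular robust expander of a carefully chosen even degree on a slightly enlarged vertex set, and then to convert a Hamilton decomposition of that supergraph into the desired linear forests by a restriction trick. The arithmetic observation that drives the choice of degree is this: if $D$ denotes the smallest \emph{even} integer with $D>\Delta(G)$, then $D=\Delta+1$ when $\Delta$ is odd and $D=\Delta+2$ when $\Delta$ is even, so in both cases
\[
\frac{D}{2}\;=\;\ceil{\frac{\Delta(G)+1}{2}}.
\]
Hence it suffices to cover $E(G)$ by $D/2$ linear forests.

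First I would establish a regularization lemma: there is a graph $\widehat G$ on vertex set $V(G)\cup W$ with $W\neq\emptyset$ and $|W|$ bounded (a constant number of extra vertices suffices) such that $\widehat G$ is $D$-regular, $\widehat G[V(G)]\supseteq G$, and $\widehat G$ is a robust $(\nu',\tau')$-expander with $D\geq \alpha'\,|V(\widehat G)|$ for suitable $\nu',\tau',\alpha'$. Since $D>\Delta$, every original vertex $v$ has positive deficiency $D-d_G(v)\geq 1$, so there is room to add the required edges inside the complement of $G$ (together with the few edges meeting $W$), and realizability of the prescribed degree sequence follows from a standard $f$-factor/Erd\H{o}s--Gallai argument; the extreme dense case $D=n$ (forcing $\Delta\in\{n-2,n-1\}$) is handled directly by taking $\widehat G$ to be a near-complete graph. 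Because $|W|$ is bounded, robust expansion of $\widehat G$ transfers from that of $G$: any admissible $S\subseteq V(\widehat G)$ has $S_0:=S\cap V(G)$ in the expanding range of $G$, and $RN_{\widehat G}(S)\supseteq RN_G(S_0)$ after a slight weakening of the parameters, and similarly $D\ge\alpha n\ge\alpha'|V(\widehat G)|$.

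With $\widehat G$ in hand I would apply Theorem~\ref{thm2.13} to obtain a Hamilton decomposition $\widehat G = C_1 \cup \cdots \cup C_{D/2}$ into $D/2$ edge-disjoint Hamilton cycles. For each $i$, set $L_i:=C_i[V(G)]$, the graph obtained from the spanning cycle $C_i$ by deleting the vertices of $W$ together with their incident edges. Since $W\neq\emptyset$, deleting a nonempty vertex set from a cycle leaves a disjoint union of paths, so each $L_i$ is a linear forest. The cycles $C_i$ partition $E(\widehat G)\supseteq E(G)$, and every edge of $G$ lies inside $V(G)$; hence each edge of $G$ survives in exactly one $L_i$, and $L_1,\dots,L_{D/2}$ (restricted to $E(G)$) partition $E(G)$ into $D/2$ linear forests. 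This gives $\la(G)\leq D/2 = \ceil{(\Delta(G)+1)/2}$, as required.

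The whole point of introducing the extra vertices $W$ is to shave off the $+1$ that a direct application of Theorem~\ref{cor:regular-expander} would cost: applied to a $D$-regular supergraph on $V(G)$ alone, that theorem only yields $\ceil{(D+1)/2}=D/2+1$, which overshoots the target (most visibly when $\Delta$ is odd and $D=\Delta+1$). Routing every Hamilton cycle through $W$ lets us break each cycle for free at an edge outside $E(G)$, so no extra forest is needed. I expect the main obstacle to be the regularization lemma, specifically the simultaneous requirements of (a) realizing the degree sequence using only a bounded number of extra vertices while avoiding the edges of $G$, and (b) guaranteeing that the resulting supergraph is a robust expander whose degree ratio is large enough to feed into Theorem~\ref{thm2.13}. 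Keeping $|W|$ bounded is exactly what makes (b) routine, so the real work is verifying feasibility of (a) uniformly across the entire range $\alpha n\le\delta(G)\le\Delta(G)\le n-1$, including the borderline dense cases.
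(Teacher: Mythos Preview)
Your plan hinges on the regularization lemma with $|W|$ bounded, and this is where it fails; the failure is structural, not a matter of being more careful. Here is a concrete obstruction. Fix a small $\alpha>0$ and take $G$ on $n$ vertices consisting of a clique $A$ of size about $\sqrt{n}$ together with $B=V(G)\setminus A$, with the edges arranged so that every vertex of $A$ has degree $\delta=\alpha n$ and every vertex of $B$ has degree $\Delta$ for some $\Delta>n/2$ (say $\Delta\approx(1-\alpha/2)n$). Since $G[B]$ has minimum degree well above $|B|/2$, such a $G$ is a robust expander with $\delta(G)=\alpha n$. Now suppose $\widehat G\supseteq G$ is simple and $D$-regular on $V(G)\cup W$ with $|W|=c$. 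Each $a\in A$ has at most $c$ neighbours in $W$, hence at least $D-\delta-c$ new neighbours inside $V(G)$; since $A$ is already a clique in $G$, all of these go to $B$. But every $b\in B$ can accept at most $D-\Delta\le 2$ new edges in total. Counting the $A$--$B$ edges of $\widehat G[V(G)]\setminus G$ gives
\[
\sqrt{n}\,(D-\delta-c)\;\le\;2\,(n-\sqrt{n}),
\]
which forces $c\ge D-\delta-2\sqrt{n}\approx(1-\tfrac{3\alpha}{2})n$. So $|W|$ must be linear in $n$, not bounded.

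Worse, allowing $|W|=\Theta(n)$ does not rescue the approach on this example: taking $S=W$ as a test set, the only vertices of $V(G)$ with more than two neighbours in $W$ lie in $A$, so $RN_{\widehat G}(W)\subseteq W\cup A$ and $|RN_{\widehat G}(W)|\le |W|+\sqrt{n}$, which is too small for robust expansion of $\widehat G$. The paper \emph{does} use an ``add new vertices and regularize'' step, but only in one of three structural cases---essentially when a linear fraction of vertices have linear deficiency---because that hypothesis is exactly what makes the enlarged graph a robust expander (the new vertices then have many neighbours spread across $V(G)$). In the remaining cases, including the example above, the paper instead peels off carefully chosen spanning paths to drive the degree sequence toward near-regularity before finishing with a different embedding. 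The case analysis you hoped to avoid is, as far as the present techniques show, genuinely necessary.
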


Note that Conjecture~\ref{laC} is equivalent to its restriction on regular graphs, as any   graph $G$
with maximum degree $\Delta$ can be embedded in a   $\Delta$-regular graph $H$.  However,
Theorem~\ref{thm:expander} does not follow from Theorem~\ref{cor:regular-expander}.
For example,  for sufficiently large $n$, we let $G$ be obtained from $K_{n-3}$ and $K_3$
by adding edges  joining each vertex of the  $K_3$ and  about $(n-3)/3$ distinct vertices of the $K_{n-3}$
such that the resulting graph has  maximum degree $n-3$. It is clear that the graph is a robust expander.
However, any $(n-3)$-regular supergraph $H$ of $G$ has at least $n+(n-3-\frac{n+3}{3})$ vertices and has a cut of
size 3. The graph is not a   robust $(\nu,\tau)$-expander for any $0<\nu \le \tau \le 2/5$.

It was shown independently by Glock, K\"uhn, and Osthus~\cite{GKO2016}  and
 K\"uhn and Osthus~\cite{KO2014} that dense quasirandom graphs and graphs with large minimum degree are robust expanders.

\begin{lem}[\cite{GKO2016}]\label{lem:quasi-expander}
Let $0<1/n_0 \ll \ve \ll \tau, p <1$ and suppose $G$ is a lower-$(p,\ve)$-regular graph on $n\ge n_0$ vertices.
Then $G$ is a robust $(\ve, \tau)$-expander.
\end{lem}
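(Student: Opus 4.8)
The plan is to fix an arbitrary $S \subseteq V(G)$ with $\tau n \le |S| \le (1-\tau)n$ and show directly that the set $W := V(G) \setminus RN_G(S)$ of vertices having fewer than $\ve n$ neighbours in $S$ is small, namely $|W| = O(\ve n)$. Since $|RN_G(S)| = n - |W|$, this gives $|RN_G(S)| \ge (1-O(\ve))n \ge |S| + \ve n$ once we combine $|S| \le (1-\tau)n$ with the hierarchy $\ve \ll \tau$. The technical content is thus an upper bound on $|W|$, which I would obtain by splitting $W$ into its part outside $S$ and its part inside $S$, i.e.\ $W = (W \setminus S) \cup (W \cap S)$, and bounding each piece via the lower-$(p,\ve)$-regularity hypothesis. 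In each case the crucial point is to exhibit two \emph{disjoint} sets of size at least $\ve n$ to which the hypothesis can legitimately be applied.

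For the part outside $S$, I would first suppose $|W \setminus S| \ge \ve n$ and apply lower-regularity to the disjoint pair $(S, W \setminus S)$---valid since $|S| \ge \tau n \ge \ve n$---to get $e_G(S, W \setminus S) \ge (p-\ve)|S|\,|W \setminus S| \ge (p-\ve)\tau n\,|W \setminus S|$. On the other hand, every vertex of $W \setminus S$ has fewer than $\ve n$ neighbours in $S$, so $e_G(S, W \setminus S) < \ve n\,|W \setminus S|$. Combining yields $(p-\ve)\tau < \ve$, which contradicts $\ve \ll \tau, p$; hence $|W \setminus S| < \ve n$.

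For the part inside $S$, the relevant set is not disjoint from $S$, so I would instead split $W \cap S$ itself into two halves $X$ and $Y$ of almost equal size. If $|W \cap S|$ were at least $\tfrac{2\ve}{p-\ve}n$, then both $X$ and $Y$ would have size at least $\ve n$, and lower-regularity applied to the disjoint pair $(X,Y)$ would give $e_G(X,Y) \ge (p-\ve)|X|\,|Y|$; but since $Y \subseteq S$ while every vertex of $X \subseteq W$ has fewer than $\ve n$ neighbours in $S$, we also have $e_G(X,Y) < \ve n\,|X|$, forcing $|Y| < \tfrac{\ve}{p-\ve}n$ and contradicting $|Y| \ge \tfrac{\ve}{p-\ve}n$. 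Thus $|W \cap S| < \tfrac{2\ve}{p-\ve}n$. Adding the two estimates gives $|W| < \bigl(1 + \tfrac{2}{p-\ve}\bigr)\ve n$, and because $\ve \ll \tau, p$ the right-hand side is at most $(\tau-\ve)n$, whence $|RN_G(S)| = n - |W| \ge (1-\tau+\ve)n \ge |S| + \ve n$, as required.

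I expect the bound on $W \cap S$ to be the main obstacle, precisely because lower-regularity is a statement about disjoint vertex sets: one cannot test $S$ against a subset of itself, so the self-splitting trick, together with the bookkeeping needed to guarantee that both halves stay above the $\ve n$ threshold, is what makes this case go through, rather than the more routine external estimate.
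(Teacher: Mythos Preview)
The paper does not give its own proof of this lemma: it is quoted from \cite{GKO2016} and used as a black box, so there is nothing in the present paper to compare your argument against.

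That said, your argument is correct and is essentially the standard one. The only place to be slightly more careful is the halving step for $W\cap S$: when you assume $|W\cap S|\ge \tfrac{2\ve}{p-\ve}n$ and split into halves $X,Y$, you should note explicitly that $\tfrac{\ve}{p-\ve}n\ge \ve n$ (since $p-\ve<1$), so that both $|X|,|Y|\ge \ve n$ and the lower-$(p,\ve)$-regularity hypothesis legitimately applies to the pair $(X,Y)$; you then derive $|Y|<\tfrac{\ve}{p-\ve}n$, which is the desired contradiction. With that minor clarification the bound $|W|<\bigl(1+\tfrac{2}{p-\ve}\bigr)\ve n$ follows, and $\ve\ll \tau,p$ indeed forces this below $(\tau-\ve)n$, giving $|RN_G(S)|\ge (1-\tau+\ve)n\ge |S|+\ve n$.
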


\begin{lem}[\cite{KO2014}]\label{expander}  Suppose that $0<\nu \leq \tau \leq \ve<1$ are such that $\varepsilon\geq 4\nu/\tau$. Let $G$ be a graph on $n$ vertices with minimum degree $\delta(G)\geq(1+\ve)n/2$. Then $G$ is a robust $(\nu, \tau)$-expander.
\end{lem}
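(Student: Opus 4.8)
The plan is to verify the defining condition directly. Fix an arbitrary $S\subseteq V(G)$ with $\tau n\le |S|\le(1-\tau)n$, write $s:=|S|$, and aim to show $|RN_G(S)|\ge s+\nu n$. Such an $S$ exists only when $\tau\le 1/2$, so I may assume this throughout. I would split the argument at the threshold $s^{\ast}:=\big(\tfrac{1-\ve}{2}+\nu\big)n$, handling large and small $S$ by quite different means.

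For the large-set regime $s\ge s^{\ast}$, no counting is needed: every vertex $v$ satisfies $|N(v)\cap S|\ge \deg(v)-(n-s)\ge \tfrac{(1+\ve)n}{2}-n+s=s-\tfrac{(1-\ve)n}{2}\ge \nu n$, so $RN_G(S)=V(G)$. Since $s\le(1-\tau)n\le(1-\nu)n$, this already gives $|RN_G(S)|=n\ge s+\nu n$. For the small-set regime $\tau n\le s<s^{\ast}$, I set $B:=\{v: |N(v)\cap S|<\nu n\}$, $b:=|B|$, and double count the edges meeting $S$:
\[
s\cdot\tfrac{(1+\ve)n}{2}\ \le\ \sum_{v\in S}\deg(v)\ =\ \sum_{v\in V(G)}|N(v)\cap S|\ <\ b\cdot\nu n+(n-b)\cdot s,
\]
using $|N(v)\cap S|<\nu n$ on $B$ and the trivial $|N(v)\cap S|\le s$ off $B$. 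Rearranging gives $b\,(s-\nu n)<\tfrac{1-\ve}{2}\,ns$, hence $b<\tfrac{(1-\ve)ns}{2(s-\nu n)}$ (the denominator is positive since $s\ge\tau n>\nu n$, as $\ve<1$ forces $\tau>\ve\tau\ge 4\nu$). It therefore suffices to prove $\tfrac{(1-\ve)ns}{2(s-\nu n)}\le n-s-\nu n$, for then $b<n-s-\nu n$ and $|RN_G(S)|=n-b>s+\nu n$.

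Writing $x:=s/n\in[\tau,s^{\ast}/n]$ and clearing the positive denominator, this last inequality is equivalent to $g(x):=x(1+\ve-2x)\ge 2\nu(1-\nu)$. Here I would exploit that $g$ is concave in $x$, so on the interval it is bounded below by the smaller of its two endpoint values, and only those two need checking. At $x=\tau$, the bound $\tau\le 1/2$ gives $g(\tau)\ge\tau\ve$, while the hypothesis $\ve\ge 4\nu/\tau$ gives $2\nu(1-\nu)\le 2\nu\le \ve\tau/2<\tau\ve$; at $x=\tfrac{1-\ve}{2}+\nu$ a short computation yields the clean factorization $g(x)-2\nu(1-\nu)=(1-\ve)(\ve-3\nu)\ge 0$, nonnegative because $\ve<1$ and $3\nu\le \tfrac{3}{4}\ve\tau\le\ve$. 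This settles the small-set regime and finishes the lemma.

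The main obstacle is that the double-counting bound is only useful when $S$ is not too large: as $s$ approaches $s^{\ast}$ the right-hand side $n-s-\nu n$ shrinks while the counting estimate does not, so a single application of counting cannot cover the full range $[\tau n,(1-\tau)n]$. The remedy is the separate elementary observation that once $s\ge s^{\ast}$ every vertex automatically has at least $\nu n$ neighbors in $S$, so the two regimes meet at $s^{\ast}$. The only genuinely quantitative point is the reduction to $g(x)\ge 2\nu(1-\nu)$, where the hypothesis $\ve\ge 4\nu/\tau$ is seen to be exactly what is needed at the small endpoint $x=\tau$, the large endpoint being handled by the transparent inequality $(1-\ve)(\ve-3\nu)\ge 0$.
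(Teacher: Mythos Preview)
Your argument is correct. The double-counting bound in the small-set regime and the pigeonhole argument in the large-set regime are both sound, the threshold $s^{\ast}=(\tfrac{1-\ve}{2}+\nu)n$ is chosen exactly so that the two regimes meet, and the concavity reduction to the two endpoint checks is valid; in particular the factorization $g\!\left(\tfrac{1-\ve}{2}+\nu\right)-2\nu(1-\nu)=(1-\ve)(\ve-3\nu)$ is accurate.

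As for comparison with the paper: Lemma~\ref{expander} is not proved in the paper at all---it is quoted from K\"uhn and Osthus~\cite{KO2014} as a black box (see the sentence preceding the lemma). So there is no ``paper proof'' to compare against. Your direct verification is the standard way this fact is established and would serve as a self-contained substitute for the citation.
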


As a consequence of Theorem~\ref{thm:expander} and Lemmas~\ref{lem:quasi-expander} and~\ref{expander}, we have the following corollaries.

\begin{coro}\label{cor1} For any given $0<\varepsilon, p, \alpha<1$, there  exists $n_0\in \mathbb{N}$  for which  the following statement holds.   If $G$ is a lower-$(p,\ve)$-regular graph on $n\ge n_0$ vertices with $\delta(G) \ge \alpha n$, then $\la(G)\leq \lceil\frac{\Delta(G)+1}{2}\rceil$.
\end{coro}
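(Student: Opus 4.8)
The plan is to read Corollary~\ref{cor1} as the composition of two black boxes: Theorem~\ref{thm:expander}, which reduces the Linear Arboricity Conjecture for graphs of linear minimum degree to the single structural hypothesis of robust expansion, and Lemma~\ref{lem:quasi-expander}, which certifies exactly that hypothesis for lower-$(p,\ve)$-regular graphs. So the whole argument is a parameter-chaining exercise. Given $\alpha$, I would first invoke Theorem~\ref{thm:expander} to fix the constant $\tau=\tau(\alpha)>0$ that it produces; this $\tau$ depends only on $\alpha$. The theorem then guarantees, for every $\nu>0$, an integer $N_0=N_0(\alpha,\nu,\tau)$ beyond which any robust $(\nu,\tau)$-expander $G$ with $\delta(G)\ge\alpha n$ already satisfies $\la(G)\le\lceil(\Delta(G)+1)/2\rceil$. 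Hence it suffices to exhibit an admissible $\nu>0$ for which the given $G$ is a robust $(\nu,\tau)$-expander.

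For that single remaining step I would apply Lemma~\ref{lem:quasi-expander} with the value $\tau=\tau(\alpha)$ just fixed: once the parameters satisfy the hierarchy $1/n\ll\ve\ll\tau,p$, the lemma says that a lower-$(p,\ve)$-regular graph on $n$ vertices is a robust $(\ve,\tau)$-expander. Taking $\nu:=\ve$ (a legitimate choice, since $\nu$ in Theorem~\ref{thm:expander} ranges over all positive reals) and setting $n_0:=\max\{N_0(\alpha,\ve,\tau),\,n_1\}$, where $n_1$ is the threshold required by Lemma~\ref{lem:quasi-expander}, the two results chain cleanly: for $n\ge n_0$ the graph $G$ is a robust $(\ve,\tau)$-expander with $\delta(G)\ge\alpha n$, so Theorem~\ref{thm:expander} delivers $\la(G)\le\lceil(\Delta(G)+1)/2\rceil$. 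Using the monotonicity noted in the introduction, namely that $0<\nu'\le\nu\le\tau\le\tau'<1$ forces a robust $(\nu,\tau)$-expander to be a robust $(\nu',\tau')$-expander, one could equally work with any $\nu\le\ve$ if it were convenient.

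The one point that genuinely requires care — and the place I expect the only friction to arise — is the order in which the parameters become available. Lemma~\ref{lem:quasi-expander} needs $\ve$ to be small relative to both $p$ and the constant $\tau$, yet Theorem~\ref{thm:expander} only outputs $\tau=\tau(\alpha)$ after $\alpha$ is given, while in the corollary $\ve$ is prescribed independently. The chaining is therefore valid precisely in the regime $\ve\ll\tau(\alpha),p$, which is the substantive quasirandomness regime: once $\ve\ge p$ the condition $e_G(S,T)\ge(p-\ve)|S||T|$ becomes vacuous and conveys no expansion information, so no expander-based argument can apply. I would accordingly establish the corollary under the implicit hierarchy $1/n_0\ll\ve\ll p,\alpha$ (equivalently, first fix $\tau=\tau(\alpha)$ and require $\ve\ll\tau(\alpha),p$ before calling Lemma~\ref{lem:quasi-expander}); with that understanding, every remaining step is a direct substitution into the two quoted results, and no further estimation is needed.
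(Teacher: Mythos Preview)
Your proposal is correct and is exactly the argument the paper intends: the paper does not give a separate proof of Corollary~\ref{cor1} but simply records it as an immediate consequence of Theorem~\ref{thm:expander} together with Lemma~\ref{lem:quasi-expander}, which is precisely the two-step chaining you describe. Your discussion of the parameter hierarchy (needing $\ve\ll\tau(\alpha),p$ for Lemma~\ref{lem:quasi-expander} to bite) is a legitimate caveat that the paper leaves implicit.
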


\begin{coro}\label{cor2} For any given $0<\varepsilon<1$, there  exists $n_0\in \mathbb{N}$  for which  the following statement holds. If $G$ is a graph on $n\geq n_0$ vertices with $\delta(G)\geq \frac{1}{2}(1+\ve)n$, then $\la(G)\leq \lceil\frac{\Delta(G)+1}{2}\rceil$.
\end{coro}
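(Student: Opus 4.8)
The plan is to derive Corollary~\ref{cor2} directly from Theorem~\ref{thm:expander} together with Lemma~\ref{expander}, treating it purely as a parameter-chasing argument. Given $0<\ve<1$, the first step is to fix $\alpha=1/2$. Since the hypothesis $\delta(G)\ge \frac12(1+\ve)n$ forces $\delta(G)\ge \frac{n}{2}=\alpha n$, this choice of $\alpha$ is legitimate, and Theorem~\ref{thm:expander} then supplies a threshold $\tau=\tau(\alpha)$ that I must work with from here on.

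Next I would choose the remaining expansion parameters so that the hypotheses of Lemma~\ref{expander} and those of Theorem~\ref{thm:expander} are simultaneously met. Set $\tau'=\min\{\tau,\ve\}$ and $\nu=\ve\tau'/4$. Then $0<\nu\le\tau'\le\ve<1$ and $\ve\ge 4\nu/\tau'$ (in fact with equality), so Lemma~\ref{expander} applies and shows that every $G$ with $\delta(G)\ge\frac12(1+\ve)n$ is a robust $(\nu,\tau')$-expander. Because $\tau'\le\tau$, the monotonicity remark recorded after the definition of robust expanders (namely that a robust $(\nu,\tau')$-expander is also a robust $(\nu,\tau)$-expander whenever $\tau'\le\tau$) upgrades this to $G$ being a robust $(\nu,\tau)$-expander for the \emph{specific} $\tau$ returned by Theorem~\ref{thm:expander}.

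Finally, feeding $\alpha=1/2$ and this pair $(\nu,\tau)$ into Theorem~\ref{thm:expander} yields an integer $N_0=N_0(\alpha,\nu,\tau)$, and I would set $n_0=N_0$. For every graph $G$ on $n\ge n_0$ vertices with $\delta(G)\ge\frac12(1+\ve)n$, the previous step certifies that $G$ is a robust $(\nu,\tau)$-expander with $\delta(G)\ge\alpha n$, so Theorem~\ref{thm:expander} gives $\la(G)\le\lceil\frac{\Delta(G)+1}{2}\rceil$, as required.

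I do not expect a genuine obstacle here, since the statement is a corollary; the only real care is in the order of the quantifiers. The $\tau$ appearing in Lemma~\ref{expander} is one we are free to choose small, whereas the $\tau$ in Theorem~\ref{thm:expander} is prescribed once $\alpha$ is fixed. Reconciling the two is the crux: one must first extract $\tau$ from the theorem, then select $\tau'\le\min\{\tau,\ve\}$ when invoking the lemma, and finally use monotonicity to pass from $\tau'$ back up to $\tau$. Verifying the sequencing of these dependencies, together with the single numerical constraint $\ve\ge 4\nu/\tau'$, constitutes essentially the entire content of the proof.
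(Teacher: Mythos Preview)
Your proposal is correct and follows exactly the approach the paper intends: the paper simply states that Corollary~\ref{cor2} is a consequence of Theorem~\ref{thm:expander} and Lemma~\ref{expander}, and your argument spells out precisely this derivation with the appropriate parameter bookkeeping. The care you take with the quantifier order (fixing $\tau=\tau(\alpha)$ first, then choosing $\tau'\le\min\{\tau,\ve\}$ for Lemma~\ref{expander}, then invoking monotonicity) is exactly what is needed and is more explicit than the paper itself.
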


Note again that
Corollary~\ref{cor2} does not follow from Theorem~\ref{JCTB2016} as passing to a regular supergraph may significantly increase the number
of vertices. Indeed, we now  construct a graph $G$ of maximum degree $\Delta$  such that any $\Delta$-regular supergraph $H$ of $G$ satisfies $\Delta< |V(H)|/2$.
Choose  $n$  such that both  $\frac{1}{2} n$ and  $\ve n$ are even integers. We let $G_1$  be a $3\ve n$-regular graph on $\frac{1}{2} n-\ve n$  vertices,
$G_2$  be a $5\ve n$-regular graph on  another $\frac{1}{2} n-\ve n$  vertices, and  $G_3$ be another complete graph on $2\ve n$  vertices.
Then the graph $G$ is obtained from $G_1, G_2, G_3$ by adding all edges between $G_1$ and $G_2$, and all edges between $G_1$ and $G_3$.
Now  we have $\delta(G)=\frac{1}{2} n-\ve n+2\ve n -1=\frac{1}{2} n+\ve n-1$, and $\Delta:=\Delta(G) =\frac{1}{2} n-\ve n+3\ve n +2\ve n =\frac{1}{2} n+4\ve n$.
Also, all vertices of $V(G_1) \cup V(G_2)$ are of maximum degree in $G$. Thus
we have $\sum_{v\in  V(G)} (\Delta -d_G(v)) =2\ve n (3\ve n+1)$. Since all vertices of minimum degree in $G$ form a clique, and  $\Delta(G)-\delta(G) =3\ve n +1$,    it follows that  any  $\Delta$-regular supergraph  $H$ of $G$ has at least $n+3\ve n+1$  vertices.
On average, one vertex  $v$ from  $V(H)\setminus V(G)$  is adjacent in $H$ to at most $2\ve n$
vertices from $V(G)$.  As $H$ is $\Delta$-regular,  $v$ has at least $\Delta -2\ve n$ neighbors in $H$ from $V(H)\setminus V(G)$.
Thus   $H$  has more than   $n+ \Delta-2\ve n >\frac{3}{2} n $ vertices. As a consequence, we have
$\Delta  <\frac{1}{3} (1+8\ve )|V(H)|$,  where  $ \frac{1}{3} (1+8\ve )|V(H)| < \frac{1}{2}|V(H)|$ if $\ve <\frac{1}{16}$.

Our proof of Theorem~\ref{thm:expander}, which relies on Theorem~\ref{thm2.13}, will be given in Section~3 after some preliminaries in the next section.

\section{Notation and Preliminaries}

Let $G$ be a graph and $S\subseteq V(G)$.  We let  $n(G) =|V(G)|$ and $e(G)=|E(G)|$.
The subgraphs of $G$ induced on $S$ and $G-S$  are  respectively denoted by $G[S]$ and  $G-S$.
For $v\in V(G)$, we write $G-v$ for $G-\{v\}$.
For two  disjoint subsets $S,T$ of $V(G)$, we denote by $E_{G}(S,T)$  the set of edges in $G$ with one endvertex in $S$ and the other in $T$, and   let $e_G(S,T)=|E_G(S,T)|$.  If $F\subseteq E(G)$,  then $G-F$ is obtained from $G$ by deleting all edges in $F$.
If $F$ is a submultiset of edges from the complement $\overline{G}$, then $G+F$
is obtained from $G$ by adding the edges of $F$ to $G$ ($G+F$ may become a multigraph).  For two integers $p$ and $q$, let $[p, q] = \{i\in \mathbb{Z}\mid p\leq i\leq q\}$.

We will make use of the following notation: $0<a\ll b\leq1$. Precisely,
if we claim that a statement is true whenever $0<a\ll b\leq1$, then this means that
there exists a non-decreasing function $f : (0, 1]\rightarrow (0, 1]$ such that the statement holds
for all $0<a\ll b\leq1$ satisfying $a\leq f(b)$. Intuitively, this means that for every $b>0$, the statement is true provided that $a$ is sufficiently small compared to $b$.

\subsection{Degree sequences and number of maximum degree vertices}

Let $n\geq 2$ be an integer. A sequence of nonnegative integers listed in non-increasing order
$(d_1,\ldots, d_n)$ is \emph{graphic} if there is a graph $G$ such that the degree sequence of $G$ is $(d_1,\ldots, d_n)$. In this case, we say that $G$ \emph{realizes} $(d_1,\ldots, d_n)$.

The first result below by Hakimi~\cite{H1962} gives a sufficient and necessary condition
for a sequence of integers to be the degree sequence of a multigraph.
The second result by the second author~\cite{S2023} will be used to construct a supergraph of a given graph $G$ by
adding additional vertices.

\begin{thm}[\cite{H1962}]\label{thm:degree-sequence-multigraph} Let $0\leq d_n\leq \ldots \leq d_1$ be integers. Then there exists a multigraph $G$ on vertices $v_1,v_2,\ldots,v_n$ such that $d_{G}(v_i)=d_i$ for all $i\in [1,n]$ if and only if $\sum\limits_{i=1}^{n}d_i$ is even and $\sum\limits_{i=2}^{n}d_i\geq d_1$.
\end{thm}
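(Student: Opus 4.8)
The plan is to establish the two implications separately. Necessity is immediate: if a loopless multigraph $G$ realizes $(d_1,\dots,d_n)$, then every edge contributes $2$ to the total degree $S:=\sum_{i=1}^n d_i$, so $S=2e(G)$ is even; and counting the degrees of $v_2,\dots,v_n$ by splitting each incident edge according to whether it lies inside $\{v_2,\dots,v_n\}$ or joins this set to $v_1$ (no loops, so every edge at $v_1$ is of the latter type) gives
\[
\sum_{i=2}^{n} d_i = 2\,e(G-v_1) + d_1 \ge d_1,
\]
the required bound. Sufficiency I would prove by induction on $S$, building the multigraph one edge at a time.

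For the induction, the base case $S=0$ forces all $d_i=0$, realized by the edgeless multigraph. For the inductive step with $S\ge 2$ even, if $d_1=0$ all degrees vanish; otherwise $d_1\ge 1$, and the hypothesis $\sum_{i\ge 2}d_i\ge d_1\ge 1$ guarantees $d_2\ge 1$. I would then decrement $d_1$ and $d_2$ by one — intending to place an edge $v_1v_2$ at the end — re-sort the resulting sequence of sum $S-2$, realize it by the induction hypothesis (assigning the two decremented degrees to $v_1$ and $v_2$), and finally add the edge $v_1v_2$ back.

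The step I expect to require the most care is checking that the decremented, re-sorted sequence still obeys the ``largest at most the sum of the rest'' hypothesis, i.e.\ that its maximum $D$ satisfies $2D\le S-2$ (writing the original hypothesis as $2d_1\le S$). Since only the two largest entries drop by one, the new maximum is $D=\max(d_1-1,\,d_3)$. If $D=d_1-1$ then $2D=2d_1-2\le S-2$ at once. The only alternative is that $d_3$ attains the maximum with $d_3\ge d_1-1$, which (as $d_3\le d_1$) forces $d_1=d_2=d_3=D$; then $S\ge 3D$, so $2D\le S-2$ follows for $D\ge 2$ from $2D\le 3D-2\le S-2$, while $D=1$ needs only $S\ge 4$, which holds since $S$ is even and $S\ge 3$. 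This closes the induction and produces the desired loopless multigraph.

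As a cleaner alternative I would keep in reserve the configuration model: put $d_i$ stubs at $v_i$ and pair the $S$ stubs into $S/2$ edges, forbidding pairs at the same vertex. Such a loopless pairing exists exactly when no vertex holds more than half the stubs, that is $d_1\le S/2$, which is equivalent to $d_1\le\sum_{i\ge 2}d_i$; greedily pairing a stub of the current fullest vertex with one of the second-fullest preserves this invariant at each step, giving the same conclusion without re-sorting.
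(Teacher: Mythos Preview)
Your argument is correct. The paper does not prove this statement at all: it is quoted as Hakimi's classical result \cite{H1962} and used as a black box (in the proof of Lemma~\ref{lem:reduce-to-regular}), so there is no ``paper's own proof'' to compare against. Both of your approaches --- the induction on the degree sum and the greedy stub-pairing --- are standard routes to Hakimi's theorem and either would serve.

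One cosmetic point in the inductive step: when you write ``the only alternative is that $d_3$ attains the maximum with $d_3\ge d_1-1$'', you need the strict inequality $d_3>d_1-1$ (equivalently $d_3\ge d_1$) to force $d_1=d_2=d_3$; the case $d_3=d_1-1$ is already absorbed by your first branch $D=d_1-1$. This does not affect the validity of the argument, only the wording.
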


\begin{lem}[\cite{S2023}]\label{graphic}Let $n$ and $d$ be positive integers such that $n\geq d+1\geq 3$. Suppose $(d_1,\ldots, d_n)$ is a sequence of positive integers with $d_1=\ldots=d_t=d$ and $d_{t+1}=\ldots=d_n = d-1$ and $\sum\limits_{i=1}^{n}d_i$ even, where $t\in [1,n]$. Then $(d_1,\ldots, d_n)$ is graphic. Furthermore, the sequence
can be realized in polynomial time in $n$.
\end{lem}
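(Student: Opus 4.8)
The plan is to prove the lemma \emph{constructively}, since this simultaneously yields the polynomial-time realization the statement demands. The sequence is \emph{near-regular}: it takes only the two values $d$ and $d-1$, with $t$ entries equal to $d$. The idea is to start from a regular ``base'' graph and then adjust the degrees of a single block of vertices by adding or deleting one perfect matching. The sum equals $\sum_{i=1}^n d_i = nd-(n-t)$, so the evenness hypothesis says precisely that $nd \equiv n-t \pmod 2$. I would use this to split into two cases that pin down whether to work from a $d$-regular or a $(d-1)$-regular base, always arranging that the adjusting matching saturates an \emph{even} number of vertices. Concretely: if $nd$ is even then $n-t$ is even, and I build a $d$-regular graph and \emph{delete} a perfect matching saturating exactly the $n-t$ vertices that are meant to have degree $d-1$; if $nd$ is odd (so $n$ and $d$ are both odd, and $d\ge 3$), then $n-t$ is odd and hence $t$ is even, and I build a $(d-1)$-regular graph (which exists since $d-1$ is even) and \emph{add} a perfect matching saturating exactly the $t$ vertices meant to have degree $d$.

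For the base graphs I would use circulants on vertices $v_0,\dots,v_{n-1}$, which exist for all $n\ge d+1$ and are trivial to write down in polynomial time. When the target degree is even, take the standard $C_n^{\,k}$ joining each $v_i$ to $v_{i\pm 1},\dots,v_{i\pm k}$ with $k=d/2$ (respectively $k=(d-1)/2$); this is regular of the correct even degree and, crucially, makes every pair of consecutive vertices adjacent since $k\ge 1$ when $d\ge 2$. When I need a $d$-regular base with $d$ odd (forcing $n$ even), I take $C_n^{\,(d-1)/2}$ together with the antipodal perfect matching $\{v_i v_{i+n/2}\}$, which again keeps consecutive vertices adjacent. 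In the deletion case I place the $n-t$ low-degree vertices as a consecutive block $v_0,\dots,v_{n-t-1}$ and delete the matching $\{v_0v_1,\,v_2v_3,\dots\}$; each of these edges is present in the base (consecutive vertices are adjacent), and removing it drops exactly those vertices from degree $d$ to $d-1$ while leaving all others untouched, so $n-t$ even is exactly what is needed.

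The step I expect to require the most care is the \emph{addition} case, where the matching edges must be \emph{non}-edges of the base so that the result stays simple. Here I would exploit that $n\ge d+1$ makes the antipodal circular distance exceed $(d-1)/2$: two vertices $v_i,v_j$ are non-adjacent in $C_n^{\,(d-1)/2}$ precisely when their circular distance exceeds $(d-1)/2$, so I can designate the $t$ high-degree vertices as two disjoint blocks of size $t/2$ separated by roughly $\floor{n/2}$ and pair them ``near-antipodally'' as $v_i \leftrightarrow v_{\floor{n/2}+i}$. Since $t$ is even with $t\le n-1$, these $t/2$ pairs are disjoint and, by the distance bound, are all non-edges of the base; adding them raises exactly the designated $t$ vertices to degree $d$. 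This resolves the only genuine obstacle---edge availability while maintaining simplicity---and both the circulant and the single matching are constructed in polynomial time, giving the claimed realization. (As a sanity check on existence alone, one could instead verify the Erd\H{o}s--Gallai inequalities directly, which reduce to the easy bounds $\sum_{i\le k}d_i\le kd$ against $k(k-1)+\sum_{i>k}\min(d_i,k)$ using $d\le n-1$; but the explicit construction above is cleaner and delivers the algorithmic conclusion at once.)
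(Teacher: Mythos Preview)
The paper does not supply its own proof of this lemma; it is quoted verbatim from~\cite{S2023} and used as a black box. So there is no in-paper argument to compare against.

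Your constructive proof is correct and self-contained. The case split on the parity of $nd$ is the right organizing device: it guarantees that the set of vertices whose degree must be adjusted (the $n-t$ low-degree vertices in the deletion case, the $t$ high-degree vertices in the addition case) has even size, so a single perfect matching on that set does the job. The circulant bases are well-defined throughout (in particular $d\ge 2$ ensures $k\ge 1$ in the even-degree case, and $d$ odd forces $d\ge 3$ so $(d-1)/2\ge 1$), and your check that the antipodal edges are absent from $C_n^{(d-1)/2}$ is exactly the simplicity verification needed. In the addition case your near-antipodal pairing is sound: since $n$ and $d$ are both odd one has $n\ne d+1$, hence $n\ge d+2$, and the circular distance $\lfloor n/2\rfloor=(n-1)/2>(d-1)/2$ with strict inequality, so all added pairs are genuine non-edges; and $t\le n-1$ (as $t$ is even, $n$ odd) keeps the two blocks disjoint. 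The construction is manifestly polynomial.
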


We will also need the following result by Plantholt and the second author~\cite{PS2023} which gives a lower bound on the number of maximum degree vertices
if we assume that all vertices of degree less than maximum degree form a clique.

\begin{lem}[\cite{PS2023}]\label{VDelta}  Let $G$ be a graph on $n$ vertices such that all vertices of degree less than $\Delta(G)$ are mutually adjacent in $G$. Then  $G$ has more than $n/2$ vertices of maximum degree.
\end{lem}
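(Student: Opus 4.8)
The plan is to partition $V(G)$ into the set $A$ of vertices whose degree is strictly less than $\Delta := \Delta(G)$ and the set $B$ of vertices of degree exactly $\Delta$, to write $a = |A|$ and $b = |B|$ (so $a + b = n$ and $b \ge 1$ since the maximum is attained), and to prove the claim in the equivalent form $b > a$ via a double count of the edges between $A$ and $B$. If $A = \emptyset$ then $b = n > n/2$ and we are done, so I would assume $a \ge 1$.

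First I would record what the clique hypothesis buys us. Since $A$ induces a complete graph, every $v \in A$ satisfies $d_G(v) \ge a - 1$; combined with $d_G(v) \le \Delta - 1$ this gives $a \le \Delta$, a bound I will reuse to keep a later quantity positive. Moreover each $v \in A$ has exactly $a - 1$ neighbors inside $A$, so its number of neighbors in $B$ equals $d_G(v) - (a-1) \le \Delta - a$; summing over $A$ yields the upper bound $e_G(A,B) \le a(\Delta - a)$.

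Next I would bound $e_G(A,B)$ from below using the $B$ side. Each $w \in B$ has $\Delta$ neighbors, at most $b - 1$ of which lie in $B$, hence at least $\Delta - (b-1)$ of which lie in $A$; summing gives $e_G(A,B) \ge b(\Delta - b + 1)$. Combining the two bounds and rearranging produces $\Delta(b - a) \le (b-a)(a+b) - b$. Assuming for contradiction that $b \le n/2$, i.e.\ $a \ge b$, I would split into two cases: if $a = b$ the inequality forces $0 \le -b$, impossible since $b \ge 1$; if $a > b$, dividing by the negative quantity $b - a$ and using $b > 0$ forces $\Delta > a + b = n$, impossible since $\Delta \le n - 1$ in a simple graph. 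Either way we reach a contradiction, so $b > n/2$.

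The only real subtlety—and hence the main obstacle—is guaranteeing that the lower bound $b(\Delta - b + 1)$ is genuinely useful rather than vacuously negative; this is exactly where the clique hypothesis re-enters, since $a \le \Delta$ together with the assumed $b \le a$ gives $b \le \Delta$ and hence $\Delta - b + 1 \ge 1$. I expect the remainder to be routine arithmetic. As a sanity check on tightness I would test the argument against $P_4$, where the low-degree vertices do \emph{not} form a clique and one indeed only gets $b = n/2$, confirming that the hypothesis cannot be dropped.
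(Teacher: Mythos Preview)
Your argument is correct. The double count of $e_G(A,B)$ yields $b(\Delta-b+1)\le a(\Delta-a)$, which rearranges to $(b-a)(\Delta-n)\le -b$; since $\Delta\le n-1$ the factor $\Delta-n$ is negative, so $b-a>0$ is forced once $b\ge 1$. Your remark about needing $\Delta-b+1\ge 1$ is overly cautious: the inequality $|N(w)\cap A|\ge \Delta-(b-1)$ is valid even when the right-hand side is negative, and the subsequent algebra goes through regardless, so that paragraph could be dropped without harm.

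As for comparison with the paper: Lemma~\ref{VDelta} is quoted from~\cite{PS2023} and no proof is given here, so there is nothing in the present paper to compare against. Your self-contained counting argument is a perfectly acceptable proof of the statement.
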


\subsection{Matchings in almost-regular graphs}

Let $G$ be a graph.
A \emph{matching}  $M$ in  $G$ is a set of vertex-disjoint edges.  The set of vertices \emph{covered} or \emph{saturated} by $M$ is denoted by $V(M)$.
We  say  that $M$ is \emph{perfect}    if $V(M)=V(G)$.
The graph $G$ is \emph{factor-critical} if $G$ does not have a perfect matching but  $G-v$ has one
for any $v\in V(G)$.

For a subset $X\subseteq V(G)$, we denote by $o(G-X)$
the number of components of $G-X$
of odd order.
Define  ${\rm df}(G)=\max_{X\subseteq V(G)} (o(G-X)-|X|)$.
By applying Tutte's Theorem~\cite{MR0023048}
to the graph obtained from $G$ by adding ${\rm df}(G)$ vertices and joining all edges
between $V(G)$ and these new vertices, Berge~\cite{MR0100850} observed that
the maximum size of a matching  in $G$ is $\frac{1}{2}(n(G)-{\rm df}(G))$.
West~\cite{MR2788782} gave a short proof of this result. To state his result,
we need some more definitions.

For $X\subseteq V(G)$,  denote by $Y$  the set of components of $G-X$.
We define an auxiliary bipartite multigraph $B(X)$ by contracting each component of $G-X$
to a single vertex and deleting edges within $X$.
Thus  for any  vertex $x\in X$ and any $y\in V(B(X))\setminus X$ such that $y$ is contracted from
a component $D_y$ of $Y$, there are exactly $e_G(\{x\}, V(D_y))$ edges between $x$ and $y$ in $B(X)$.

\begin{LEM}\label{lem:matching-structure}
	Let $G$ be a multigraph and $X\subseteq V(G)$ be maximal with $o(G-X)-|X|={\rm df}(G)$. Then
	the following statements hold.
	\begin{enumerate}[(1)]
		\item (\cite[Lemma 2]{MR2788782}) Every component of $G-X$ has odd order and is factor-critical.
		\item  (\cite[Lemma 3]{MR2788782}) The bipartite multigraph $B(X)$ has a matching covering $X$.
		\item  The bipartite multigraph $B(X)$ has a matching covering $X$ and all vertices of degree at least $d$
	from  $V(B(X))\setminus X$ in $B(X)$, where $d=\max\{d_{B(X)}(x)\mid x\in X\}$.
	\end{enumerate}
\end{LEM}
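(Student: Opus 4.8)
The plan is to exhibit two matchings of $B(X)$ and amalgamate them via the Mendelsohn--Dulmage theorem. Recall that $B(X)$ is bipartite with parts $X$ and $B:=V(B(X))\setminus X$, where each vertex of $B$ is a contracted component of $G-X$. Write $d=\max\{d_{B(X)}(x):x\in X\}$ and set $B_{\ge d}=\{y\in B: d_{B(X)}(y)\ge d\}$. Part~(2) already provides a matching $M_1$ of $B(X)$ saturating all of $X$. The strategy is to produce a second matching $M_2$ saturating all of $B_{\ge d}$, and then to combine $M_1$ and $M_2$ into one matching saturating $X\cup B_{\ge d}$, which is exactly the claim. (If $X=\emptyset$ the statement is trivial, so I assume $X\ne\emptyset$; then every vertex of $X$ is saturated by $M_1$ and hence has positive degree, so $d\ge 1$.)

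The first step is to verify Hall's condition on the $B$-side for the set $B_{\ge d}$. Fix $S\subseteq B_{\ge d}$. Since $B(X)$ is bipartite with the other part $X$, every edge incident to $S$ has its other endpoint in $N_{B(X)}(S)\subseteq X$, so counting edges with multiplicity gives $d\,|S|\le \sum_{y\in S} d_{B(X)}(y)=e_{B(X)}(S,N_{B(X)}(S))\le \sum_{x\in N_{B(X)}(S)} d_{B(X)}(x)\le d\,|N_{B(X)}(S)|$, where the first inequality uses $d_{B(X)}(y)\ge d$ for $y\in B_{\ge d}$ and the last uses that $d$ is the maximum $X$-degree. Cancelling $d\ge 1$ yields $|S|\le |N_{B(X)}(S)|$, so Hall's theorem furnishes the desired matching $M_2$ saturating $B_{\ge d}$.

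The final step is to invoke the Mendelsohn--Dulmage theorem: from a matching $M_1$ saturating the subset $X$ of one part and a matching $M_2$ saturating the subset $B_{\ge d}$ of the other part, one obtains a single matching $M\subseteq M_1\cup M_2$ of $B(X)$ saturating every vertex of $X$ saturated by $M_1$ together with every vertex of $B$ saturated by $M_2$; in particular $M$ saturates $X\cup B_{\ge d}$. Should a self-contained argument be preferred to citing Mendelsohn--Dulmage, the same conclusion follows by examining $M_1\triangle M_2$: each of its components is a path or an even cycle, and within each component one can choose either the $M_1$-edges or the $M_2$-edges so as to preserve saturation of the relevant $X$-vertices and $B_{\ge d}$-vertices simultaneously.

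I do not expect a serious obstacle; the only delicate point is the edge-counting inequality, where one must count degrees \emph{with multiplicity} (as $B(X)$ is a multigraph) so that the identity $\sum_{y\in S} d_{B(X)}(y)=e_{B(X)}(S,N_{B(X)}(S))$ holds exactly. Conceptually the argument is transparent: the high-degree vertices on the $B$-side are easy to saturate precisely because the $X$-side degrees are capped by $d$, and such a saturation can be fused with the already-known saturation of $X$ without losing either property.
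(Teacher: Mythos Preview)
Your proof is correct and follows essentially the same approach as the paper: obtain $M_1$ from part~(2), obtain $M_2$ saturating the high-degree $B$-side vertices via Hall's theorem, and then combine the two by analysing the symmetric difference $M_1\triangle M_2$ (the paper does exactly this rather than citing Mendelsohn--Dulmage by name). Your explicit edge-counting verification of Hall's condition is a welcome addition, as the paper simply asserts that Hall applies.
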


\begin{proof} Statements (1)-(2) were proved by West in~\cite{MR2788782}.

We prove statement (3). By (2), $B(X)$ has a matching $M_1$ covering $X$. Let $Y=V(B(X))\setminus X$, $d=\max\{d_{B(X)}(x) \mid x\in X\}$, $Y_d=\{y\in Y \mid d_{B(X)}(y) \ge d\}$. By applying Hall's Theorem to $B(X)[X,Y_d]$, $B(X)[X,Y_d]$ and so $B(X)$ has a matching $M_2$ saturating all vertices from $Y_d$, where $B(X)[X,Y_d]$ is the induced  subgraph of $B(X)$ with bipartitions as $X$ and $Y_d$.
In each component  $D$ of the subgraph of $B(X)$ induced on  the symmetric difference of $M_1$ and $M_2$,  there is  a matching that saturates vertices in $V(D)\cap (X\cup Y_d)$. The union of these matchings together with edges of $M_1\cap M_2$ gives a desired matching that covers all vertices of $X\cup Y_d$.
\end{proof}

A graph $G$ is \emph{almost $r$-regular} for some integer $r\ge 0$ if exactly one vertex has degree $r+1$ and all others have degree $r$.

\begin{LEM}\label{lem:matching-in-complement}
	Let $G$ be an   $n$-vertex  almost $r$-regular  graph  for some integer $r\ge 0$. Then $ \overline{G}$ has a matching covering
	at least   $ n-\frac{n}{n-r}-3$ vertices of $\overline{G}$ excluding the vertex of degree $r+1$ in $G$.
\end{LEM}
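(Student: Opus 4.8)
The plan is to pass to the complement and bound its matching deficiency. Write $s = n-r-1$ and let $v^*$ be the unique vertex of degree $r+1$ in $G$. Then in $\overline{G}$ every vertex other than $v^*$ has degree $s$, while $v^*$ has degree $s-1$; in particular $\Delta(\overline{G}) = s$. By the Berge--Tutte identity recorded in Section~2, a maximum matching of $\overline{G}$ covers exactly $n - {\rm df}(\overline{G})$ vertices, so it suffices to show ${\rm df}(\overline{G}) \le \frac{n}{n-r}+1$ and then to modify such a matching so that it avoids $v^*$.

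The heart of the argument is the deficiency bound, which I would prove by an edge count. Fix $X \subseteq V(\overline{G})$ attaining ${\rm df}(\overline{G}) = o(\overline{G}-X) - |X|$, and let $D_1,\dots,D_m$ be the components of $\overline{G}-X$. Call a component \emph{small} if $|D_i| \le s$ and \emph{large} otherwise, and let $m_1, m_2$ be their respective numbers. For a small component $D_i$ avoiding $v^*$, every vertex has degree $s$ and at most $|D_i|-1$ neighbours inside $D_i$, so counting degrees gives
\[
e_{\overline{G}}(D_i, X) \ge s|D_i| - |D_i|(|D_i|-1) = |D_i|(s+1-|D_i|) \ge s,
\]
and the single small component that may contain $v^*$ contributes at least $s-1$; hence $\sum_i e_{\overline{G}}(D_i,X) \ge m_1 s - 1$. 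Comparing with $\sum_i e_{\overline{G}}(D_i,X) \le \sum_{u\in X} d_{\overline{G}}(u) \le s|X|$ yields $m_1 s - 1 \le s|X|$, that is $m_1 \le |X| + \tfrac{1}{s}$. On the other hand each large component has at least $s+1 = n-r$ vertices, so $m_2 \le \frac{n}{n-r}$.

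Combining the two estimates gives
\[
{\rm df}(\overline{G}) = o(\overline{G}-X) - |X| \le m - |X| = m_1 + m_2 - |X| \le \tfrac{1}{s} + \frac{n}{n-r} \le \frac{n}{n-r}+1 .
\]
Consequently a maximum matching $M$ of $\overline{G}$ covers at least $n - \frac{n}{n-r} - 1$ vertices. If $M$ saturates $v^*$, I would delete the edge of $M$ incident with $v^*$; this removes exactly two covered vertices and produces a matching that avoids $v^*$ and still covers at least $n - \frac{n}{n-r} - 3$ vertices, which is the required conclusion.

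I expect the edge count bounding $m_1$ to be the main point: it is where near-regularity is essential (for a general graph the small components could be as numerous as the leaves of a star), and it is the step that must be run carefully in the two boundary regimes. The lowered degree of $v^*$ forces the additive $-1$ in $m_1 s - 1 \le s|X|$, and when $s=1$ (that is, $r=n-2$) the term $\tfrac{1}{s}$ equals $1$ rather than being negligible; tracking these two effects is exactly what accounts for the slack ``$-3$'' rather than ``$-2$'' in the statement. Note that factor-criticality of the components is not needed for this bound — only the definition of ${\rm df}(\overline{G})$ through a maximizing set $X$ together with the Berge--Tutte identity — so I would invoke only those.
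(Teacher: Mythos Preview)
Your proof is correct and follows the same overall plan as the paper: bound ${\rm df}(\overline{G})$ via the Berge--Tutte formula by separating large components (at most $n/(n-r)$ of them) from small ones, obtain a matching covering at least $n-\frac{n}{n-r}-1$ vertices, and then drop the edge through $v^*$ if necessary.

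The one genuine difference is in how the small-component contribution is turned into a bound on $o(\overline{G}-X)-|X|$. The paper chooses $X$ \emph{maximal}, invokes Lemma~\ref{lem:matching-structure}(1) so that every component is odd, and then appeals to Lemma~\ref{lem:matching-structure}(3) (the Hall-type matching in $B(X)$ covering $X$ together with all high-degree components) to conclude that $|Y|-|X|$ is at most the number of components sending at most $n-r-2$ edges to $X$. Your route is more elementary: you take any maximizing $X$, bound the total number $m$ of components directly via the double count $m_1 s-1\le \sum_i e_{\overline{G}}(D_i,X)\le s|X|$, and use $o(\overline{G}-X)\le m$. This avoids both parts (1) and (3) of Lemma~\ref{lem:matching-structure} altogether, at no cost in the final bound; the paper's route, on the other hand, reuses structural lemmas it has already set up. Your remark that factor-criticality is not needed is accurate and is precisely what distinguishes the two arguments.
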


\proof Let $x\in V(G)$ be the vertex with $d_G(x)=r+1$.
Note that  $ \overline{G}$ has $n-1$ vertices of degree $(n-1-r)$ and one vertex $x$ of degree $n-2-r$. If  $ \overline{G}$ has a perfect matching, then we are done.
Thus assume that $\overline{G}$ does not have a perfect matching. Let $X\subseteq V(\overline{G})$ be maximal
such that $o(\overline{G}-X)-|X|={\rm df}(\overline{G})$.    Then $\overline{G}-X$ has no even component by
Lemma~\ref{lem:matching-structure}(1).
For any odd component $D$ of $\overline{G}-X$ such that $x\not\in V(D)$ and $|V(D)| \le n-r-1$, we have
\begin{eqnarray*}
	e_{\overline{G}}( V(D),X) &\ge&  |V(D)|(n-r-1-(|V(D)|-1))  \\
	&\ge & n-r-1,
\end{eqnarray*}
where the last inequality is obtained as the quadratic function $ |V(D)|(n-r-1-(|V(D)|-1))$ in $|V(D)|$
achieves its minimum at $|V(D)|=1$ or $|V(D)|=n-r-1$.
There are at most  $\frac{n}{n-r}$   odd components  $D$ of $\overline{G}-X$ with $|V(D)|  \ge n-r$,
and so there are at most $ \frac{n}{n-r}+1$ odd components $D$ of $\overline{G}-X$
such that $e_{\overline{G}}(V(D), X) \le n-r-2$.   As every vertex of $X$ has degree at most $n-1-r$
in $\overline{G}$,
by Lemma~\ref{lem:matching-structure}(3), we know
that $o(\overline{G}-X)-|X| \le \frac{n}{n-r}+1$ and so
${\rm df}(\overline{G}) \le  \frac{n}{n-r}+1$.
Thus by Berge's formula  from~\cite{MR0100850} that
the maximum size of a matching  in a graph $H$ is $\frac{1}{2}(n(H)-{\rm df}(H))$, we  conclude that
$\overline{G}$ has a matching that covers at least $n-\frac{n}{n-r}-1$ vertices of $\overline{G}$,
and so it has a matching that covers at least $n-\frac{n}{n-r}-3$ vertices of $\overline{G}$ excluding $x$.
\qed

\subsection{Robust expanders}

The following lemma states that robust expansion is preserved when few edges are removed and/or few vertices are removed and/or added.
The result is stated for robust outexpanders in~\cite{MR4550147}, we list here its undirected counterpart.

\begin{LEM}[{\cite[Lemma 4.2]{MR4550147}}]\label{lem:stability-expansion}
	Let $0<\varepsilon\le \nu\le \tau\le 1$. Let $G$ be a robust $(\nu,\tau)$-expander on $n$ vertices.
	\begin{enumerate}[(a)]
		\item If $G'$ is obtained from $G$ by removing at most $\varepsilon n$ edges at each vertex, then $G'$ is a robust $(\nu-\varepsilon,\tau)$-expander.
		\item Suppose that $\tau \ge (1+2\tau)\varepsilon$. If $G'$ is obtained from $G$ by adding or removing at most $\varepsilon n$ vertices, then $G'$ is a robust $(\nu-\varepsilon,2\tau)$-expander.
	\end{enumerate}
\end{LEM}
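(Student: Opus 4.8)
The plan is to deduce both parts directly from the robust expansion of $G$ by shifting the robustness threshold; part (a) is a short warm-up, while part (b) carries the real work. For part (a), since $V(G')=V(G)$ it suffices to fix an arbitrary $S\subseteq V(G)$ with $\tau n\le|S|\le(1-\tau)n$ and bound the $(\nu-\varepsilon)$-robust neighborhood of $S$ in $G'$. As $G'$ arises from $G$ by deleting at most $\varepsilon n$ edges at each vertex, the number of neighbors of any fixed $v$ lying in $S$ decreases by at most $\varepsilon n$ when passing from $G$ to $G'$. Hence each $v\in RN_{G}(S)$, having at least $\nu n$ neighbors of $S$ in $G$, still has at least $(\nu-\varepsilon)n$ neighbors of $S$ in $G'$, so $RN_{G}(S)$ is contained in the $(\nu-\varepsilon)$-robust neighborhood of $S$ in $G'$. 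Robust expansion of $G$ then supplies at least $|S|+\nu n\ge|S|+(\nu-\varepsilon)n$ such vertices, which is exactly the required bound.

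For part (b) I would set $n'=n(G')$ and $W=V(G)\cap V(G')$. Because at most $\varepsilon n$ vertices are added or deleted, $|V(G)\setminus W|+|V(G')\setminus W|\le\varepsilon n$, we have $(1-\varepsilon)n\le n'\le(1+\varepsilon)n$, and crucially $G'[W]=G[W]$. Given a test set $S'\subseteq V(G')$ with $2\tau n'\le|S'|\le(1-2\tau)n'$ (the range being empty, hence the claim vacuous, unless $\tau\le 1/4$), I pass to its trace $S:=S'\cap V(G)$ and apply the expansion of $G$ to $S$. The first and most delicate step is to confirm that $S$ lands in the expansion window $\tau n\le|S|\le(1-\tau)n$ of $G$. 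For the lower bound, $|S|\ge|S'|-\varepsilon n\ge 2\tau n'-\varepsilon n\ge 2\tau(1-\varepsilon)n-\varepsilon n$, and this is at least $\tau n$ precisely when $\tau\ge(1+2\tau)\varepsilon$; this is exactly where the hypothesis of (b) is used. The upper bound $|S|\le|S'|\le(1-2\tau)n'\le(1-2\tau)(1+\varepsilon)n\le(1-\tau)n$ follows from $\varepsilon\le\tau$.

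With $S$ inside the window, robust expansion of $G$ yields $|RN_{G}(S)|\ge|S|+\nu n$. To carry this to $G'$, note that for $v\in W$ one has $d_{G'}(v,S')\ge d_{G'}(v,S)=d_{G}(v,S)$, using $S\subseteq S'$ and $G'[W]=G[W]$, and that $\nu n\ge(\nu-\varepsilon)n'$ since $n'\le(1+\varepsilon)n$ and $\nu\le1$; thus every robust neighbor of $S$ in $G$ that lies in $W$ is a $(\nu-\varepsilon)$-robust neighbor of $S'$ in $G'$. Counting these, discarding the at most $|V(G)\setminus W|$ robust neighbors that were deleted and comparing $|S|$ with $|S'|$ (which differ by at most the $|V(G')\setminus W|$ added vertices of $S'$), charges a combined loss of at most $\varepsilon n$ against $|S'|+\nu n$, after which weakening the dilation to $2\tau$ is meant to yield the required $|S'|+(\nu-\varepsilon)n'$.

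The main obstacle is exactly this final accounting. The removal case is clean, but when vertices are added the normalizing threshold rises from $\nu n$ to $(\nu-\varepsilon)n'$ with $n'>n$, and this change works against the estimate; the three error sources --- deleted robust neighbors, added vertices inflating $|S'|$, and the shift $n\to n'$ --- must all be absorbed into the single $\varepsilon$-loss in the robustness parameter, and the margins here are genuinely tight. I would therefore organize the estimate so that the at most $\varepsilon n$ changed vertices are charged only once, via $|V(G)\setminus W|+|V(G')\setminus W|\le\varepsilon n$, rather than separately for additions and deletions, and keep the direction of every inequality aligned with the sign of $n'-n$, which is the step most in need of care.
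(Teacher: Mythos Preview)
The paper does not give its own proof of this lemma; it is quoted from~\cite{MR4550147}. So there is no in-paper argument to compare against, and your task is really to supply a self-contained proof.

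Your treatment of part~(a) is correct, and your reduction in part~(b) is set up correctly through the point where you land $S=S'\cap V(G)$ inside the window $[\tau n,(1-\tau)n]$; the hypothesis $\tau\ge(1+2\tau)\varepsilon$ is used exactly where it should be, and your threshold check $\nu n\ge(\nu-\varepsilon)n'$ is fine.

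The gap is in the last line of the count. Writing $a=|V(G')\setminus W|$ and $r=|V(G)\setminus W|$ with $a+r\le\varepsilon n$ and $n'=n+a-r$, your chain gives
\[
|RN_{G'}(S')|\ \ge\ |RN_G(S)|-r\ \ge\ |S|+\nu n-r\ \ge\ |S'|-a+\nu n-r\ \ge\ |S'|+(\nu-\varepsilon)n,
\]
and you then assert that ``weakening the dilation to $2\tau$'' upgrades $(\nu-\varepsilon)n$ to $(\nu-\varepsilon)n'$. It does not: the passage from $\tau$ to $2\tau$ was already spent in placing $S$ inside $G$'s expansion window and provides no further slack here. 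In the pure-addition case $r=0$, $a=\varepsilon n$, the needed inequality is $\nu n-\varepsilon n\ge(\nu-\varepsilon)(1+\varepsilon)n$, i.e.\ $1\ge 1+\varepsilon$, which fails; the shortfall is $(\nu-\varepsilon)(n'-n)\le(\nu-\varepsilon)\varepsilon n$. Concretely, if the $\varepsilon n$ new vertices are isolated and all of them lie in $S'$, your bookkeeping loses exactly this amount and nothing you have written recovers it. So the addition half of~(b) is not proved by your argument as it stands; either an extra source of gain must be identified, or the accounting must be reorganised in a way that does more than ``charge the $\varepsilon n$ changes once''.
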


Following~\cite{MR3299598}, for a given integer $k\ge 2$, we say a graph $G$ is \emph{Hamilton $k$-linked}
if whenever $x_1,y_1, \ldots, x_k, y_k$ are distinct vertices, there exist vertex-disjoint paths $P_1, \ldots, P_k$
such that $P_i$ connects $x_i, y_i$ for each $i\in [1,k]$ and such that together the paths $P_1, \ldots, P_k$ cover all vertices of $G$.
We will use the following result by K\"uhn, Lo,   Osthus,  and Staden from~\cite{MR3299598}.

\begin{lem}[{\cite[Corollary 6.9(ii)]{MR3299598}}]\label{lem:hamiltonicity-of-expander}
	Let $n,k\in \mathbb{N}$ and suppose that $0<1/n \ll \nu \ll \tau  \ll \alpha <1$ and $k \le \nu^4 n$.
	If $H$ is a robust $(\nu,\tau)$-expander on $n$ vertices with $\delta(H) \ge \alpha n$, then $H$ is Hamilton $k$-linked.
\end{lem}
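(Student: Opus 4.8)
The plan is to prove the statement by a \emph{connect-then-absorb} strategy, reducing the Hamilton $k$-linked property to the ordinary Hamilton-connectedness of robust expanders of linear minimum degree, which is the engine supplied by the robust-expander machinery (and ultimately rests on Theorem~\ref{thm2.13}). Throughout, fix the terminals $x_1,y_1,\dots,x_k,y_k$ and write $\delta:=\delta(H)\ge\alpha n$.

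First I would establish a \emph{connecting lemma}: in a robust $(\nu,\tau)$-expander $H$ with $\delta\ge\alpha n$, any two vertices are joined by a path of length at most $L=O(1/\nu)$ that avoids any prescribed set $W$ with $|W|<\nu n/2$. This follows from a breadth-first argument: starting from a vertex, its first neighbourhood already has size $\ge\alpha n>\tau n$ (as $\tau\ll\alpha$), and so long as the reachable set $R$ satisfies $\tau n\le|R|\le(1-\tau)n$ we have $R\cup N_H(R)\supseteq RN_H(R)$, which grows $R$ by at least $\nu n$ per step; forbidding $W$ costs at most $|W|<\nu n/2$ per step, so the reachable sets still grow and within $O(1/\nu)$ steps exceed $(1-\tau)n$ from both ends, forcing them to meet. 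I would then connect the pairs \emph{sequentially}: having built vertex-disjoint $P_1,\dots,P_{i-1}$, connect $x_i$ to $y_i$ by a path $P_i$ of length at most $L$ avoiding all vertices already used and the remaining terminals. Since $k\le\nu^4 n$ and each path has at most $L+1$ vertices, the vertices used total at most $(L+1)\nu^4 n=O(\nu^3 n)\ll\nu n/2$, so the connecting lemma applies at every step.

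Next comes the \emph{absorbing} step. Let $A=\bigcup_{i=1}^{k}V(P_i)$ and $B=V(H)\setminus A$; then $|A|=O(\nu^3 n)$ is tiny, so by Lemma~\ref{lem:stability-expansion}(b) the induced subgraph $H[B]$ is still a robust $(\nu/2,2\tau)$-expander, and $\delta(H[B])\ge\alpha n-|A|\ge\alpha n/2$. I now fold all of $B$ into the single path $P_1$. Pick any edge $uv$ of $P_1$ (if $P_1$ is the single edge $x_1y_1$, take $uv=x_1y_1$), choose distinct $b_u\in N_H(u)\cap B$ and $b_v\in N_H(v)\cap B$ — possible since $u,v$ each have at least $\alpha n$ neighbours while $|A|$ is tiny — and replace $uv$ by the detour $u\,b_u\,\cdots\,b_v\,v$, where $b_u\,\cdots\,b_v$ is a Hamilton path of $H[B]$ from $b_u$ to $b_v$; such a path exists because $H[B]$ is a robust expander of linear minimum degree and hence Hamilton-connected. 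The resulting path $P_1'$ joins $x_1,y_1$ and covers $(A\cap V(P_1))\cup B$, while $P_2,\dots,P_k$ are untouched, so $P_1',P_2,\dots,P_k$ are vertex-disjoint and together cover $A\cup B=V(H)$, as required.

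The main obstacle is the Hamilton-connectedness invoked in the absorbing step — that a robust expander of linear minimum degree contains a Hamilton path between \emph{any} two prescribed vertices. This is the genuinely deep input; it does not follow formally from the Hamilton \emph{decomposition} Theorem~\ref{thm2.13}, since $H[B]$ is neither regular nor equipped with a prescribed end-edge, and the naive device of adjoining a degree-$2$ vertex joined to $b_u,b_v$ destroys the linear-minimum-degree hypothesis that Theorem~\ref{thm2.13} requires. In the robust-expander framework this Hamilton-connected property is instead proved directly, forcing the Hamilton path to begin and end at the prescribed vertices by the same absorption-and-expansion techniques that establish Hamiltonicity. A secondary technical point — and the reason the hypothesis $k\le\nu^4 n$ appears — is that the connecting paths must be short enough that deleting their vertices preserves both robust expansion and linear minimum degree of the remainder; this is exactly what Lemma~\ref{lem:stability-expansion} guarantees once $|A|=o(\nu n)$.
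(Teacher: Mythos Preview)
The paper does not supply a proof of this lemma; it is quoted directly from K\"uhn--Lo--Osthus--Staden~\cite{MR3299598} as Corollary~6.9(ii), so there is no in-paper argument to compare against.

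Your connect-then-absorb reduction is sound as far as it goes: the short connecting paths exist by the BFS/expansion argument you sketch, the total number of vertices consumed is $O(\nu^3 n)$, and Lemma~\ref{lem:stability-expansion} then guarantees that the leftover set $B$ still induces a robust expander of linear minimum degree. But, as you yourself correctly flag, the entire weight of the argument falls on the Hamilton-connectedness of $H[B]$ --- and Hamilton-connectedness is precisely the $k=1$ instance of the statement being proved. So what you have written is a clean reduction ``Hamilton $k$-linked for general $k$ reduces to Hamilton $1$-linked'', not a self-contained proof; the base case still requires the rotation-extension/absorption machinery that~\cite{MR3299598} develops. Since the original source proves all $k\le\nu^4 n$ simultaneously from that same engine, isolating $k=1$ as a black box does not purchase a genuine simplification --- it just relocates the hard step.
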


We next introduce the notion of a robust outexpander and cite a  theorem of  Gir\~ao, Granet, K\"uhn, Lo, and Osthus~\cite{MR4550147}
 on  ``almost-regular'' graphs. The result will help us with linear forest decomposition when the graph is close to regular.

Let  $D$ be  a digraph on $n$ vertices.  We write $xy$ for an edge which is directed from $x$ to $y$, and call $y$ an \emph{outneighbor} of $x$ and $x$ an \emph{inneighbor} of $y$.
For $x\in V(D)$,  the  outdegree of $x$, denoted  $d^+_D(x)$, is the number of edges  $xy$ for all $y\in V(D)$;
and the indegree of $x$, denoted $d^-_D(x)$, is the number of edges  $yx$ for all $y\in V(D)$.
Define the minimum \emph{semidegree} of $D$ as $\delta^0(D) =\min\{d_D^+(x), d_D^-(x) \mid  x\in V(D)\}$.

Let $0 < \nu \le \tau < 1$ and $S \subseteq V(D)$.  The \emph{$\nu$-robust outneighborhood} $RN^+_{\nu,D}(S)$ of $S$ is the set of all those vertices $x$ of $D$ which have at least $\nu n$ inneighbors  from  $S$.  The digraph $D$ is called a \emph{robust $(\nu,\tau)$-outexpander} if
\[
|RN^+_{\nu,D}(S)| \ge |S| + \nu n
\]
for all $S \subseteq V(D)$ with $\tau n \le |S| \le (1-\tau)n$.

Let $\varepsilon, \delta > 0$. We say $D$ is \emph{$(\delta,\varepsilon)$-almost regular} if, for each $v \in V(D)$,  $d^+_D(v) = (\delta \pm \varepsilon) n$ and $d^-_D(v) = (\delta \pm \varepsilon) n$. Similarly,  if $D$ is a graph, then it is
$(\delta,\varepsilon)$-\emph{almost regular} if, for each $v \in V(D)$, it holds that $d_D(v) = (\delta \pm \varepsilon) n$.

Let $0<\ve, p<1$.
We say $D$ is an \emph{$(\varepsilon,p)$-robust $(\nu,\tau)$-outexpander} if $D$ is a robust $(\nu,\tau)$-outexpander and, for any integer $k \ge \varepsilon n$, if $S \subseteq V(D)$ is a random subset of size $k$, then $D[S]$ is a robust $(\nu,\tau)$-outexpander with probability at least $1-p$.  By the definitions, we have the following fact.

\begin{fact}\label{fact:robust}
	Let $D$ be an $(\varepsilon,p)$-robust $(\nu,\tau)$-outexpander. Then, for any $\varepsilon' \ge \varepsilon$, $p' \ge p$, $\nu' \le \nu$, and $\tau' \ge \tau$, $D$ is an $(\varepsilon',p')$-robust $(\nu',\tau')$-outexpander.
\end{fact}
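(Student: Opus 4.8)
The plan is to reduce everything to a single ground-set-independent monotonicity observation about plain robust outexpansion, and then feed that observation through the two clauses of the definition of an $(\varepsilon,p)$-robust outexpander.

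First I would isolate the following elementary claim: \emph{if $H$ is a robust $(\nu,\tau)$-outexpander on $m$ vertices and $\nu'\le\nu$, $\tau'\ge\tau$, then $H$ is a robust $(\nu',\tau')$-outexpander}. To see this, take any $S\subseteq V(H)$ with $\tau' m\le|S|\le(1-\tau')m$. Since $\tau'\ge\tau$ we have $\tau m\le\tau' m\le|S|\le(1-\tau')m\le(1-\tau)m$, so $S$ lies in the admissible range for the $(\nu,\tau)$-outexpander property. Next, because lowering the inneighbor threshold can only enlarge the robust outneighborhood, $RN^+_{\nu,H}(S)\subseteq RN^+_{\nu',H}(S)$, as a vertex with at least $\nu m$ inneighbors in $S$ certainly has at least $\nu' m$ of them when $\nu'\le\nu$. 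Combining these,
\[
|RN^+_{\nu',H}(S)|\ge|RN^+_{\nu,H}(S)|\ge|S|+\nu m\ge|S|+\nu' m,
\]
which is exactly the expansion bound required. The point to stress is that this argument never uses the value of $m$, so it applies verbatim with $H=D$ (taking $m=n$) and with every induced subdigraph $H=D[S]$ (taking $m=|S|$).

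With the claim in hand I would verify the two clauses of the definition for the new parameters. For the first clause, $D$ is a robust $(\nu,\tau)$-outexpander by hypothesis, hence a robust $(\nu',\tau')$-outexpander by the claim with $m=n$. For the second clause, fix an integer $k\ge\varepsilon' n$; since $\varepsilon'\ge\varepsilon$ we also have $k\ge\varepsilon n$, so the $(\varepsilon,p)$-hypothesis on $D$ guarantees that a random $S\subseteq V(D)$ of size $k$ induces a robust $(\nu,\tau)$-outexpander $D[S]$ with probability at least $1-p$. Applying the claim to $D[S]$ with $m=k$, on this same event $D[S]$ is in fact a robust $(\nu',\tau')$-outexpander; and since $p'\ge p$ gives $1-p\ge1-p'$, this event occurs with probability at least $1-p'$. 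Both clauses then hold, which is the desired conclusion.

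I do not anticipate a genuine obstacle here: the statement is a pure monotonicity bookkeeping result, fully parallel to the one-line remark made earlier in the paper for undirected robust expanders. The only place demanding a little care is the reference vertex count, since the robust outexpansion of an induced subdigraph $D[S]$ is measured relative to its own order $|S|=k$ rather than relative to $n$. For this reason I would deliberately state and prove the monotonicity claim for an arbitrary ground order $m$, so that it may be invoked both for $D$ and for the random subdigraphs $D[S]$ with no rescaling of the parameters.
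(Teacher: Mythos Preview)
Your proposal is correct and matches the paper's treatment: the paper simply asserts this fact ``by the definitions'' without writing out a proof, and your argument is exactly the straightforward unpacking of those definitions that the authors leave implicit. Your care in stating the monotonicity claim for an arbitrary ground order $m$ so that it applies both to $D$ and to the random induced subdigraphs $D[S]$ is the right way to make this precise.
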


The lemma below says that  an   induced subgraph on a subset  with relatively large size of a robust outexpander  is also a robust outexpander.

\begin{LEM}[{\cite[Lemma 14.3]{MR4550147}}]\label{lem:robustscale}
	Let $0 < \frac{1}{n} \ll \varepsilon \ll \nu' \ll  \nu, \tau,\alpha \ll 1$. Suppose $D$ is a robust $(\nu,\tau)$-outexpander on $n$ vertices satisfying $\delta^0(D) \ge \alpha n$. Then $D$ is an $(\varepsilon, n^{-2})$-robust $(\nu',4\tau)$-outexpander.
\end{LEM}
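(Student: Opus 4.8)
The plan is to verify the two clauses in the definition of an $(\varepsilon,n^{-2})$-robust $(\nu',4\tau)$-outexpander separately. The first clause, that $D$ is itself a robust $(\nu',4\tau)$-outexpander, is immediate from monotonicity: if $T\subseteq V(D)$ satisfies $4\tau n\le |T|\le (1-4\tau)n$ then $\tau n\le |T|\le (1-\tau)n$, so robust $(\nu,\tau)$-outexpansion gives $|RN^+_{\nu,D}(T)|\ge |T|+\nu n$, and since $\nu'\le\nu$ we have $RN^+_{\nu,D}(T)\subseteq RN^+_{\nu',D}(T)$, whence $|RN^+_{\nu',D}(T)|\ge |T|+\nu n\ge |T|+\nu' n$. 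All the work is in the second clause.

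So fix an integer $k\ge\varepsilon n$, let $S$ be a uniformly random $k$-subset of $V(D)$, and aim to show $\Pr[\,D[S]\text{ is not a robust }(\nu',4\tau)\text{-outexpander}\,]\le n^{-2}$. First I would record the concentration tools: for fixed $W\subseteq V(D)$ the intersection $|W\cap S|$ is hypergeometric with mean $\tfrac{k}{n}|W|$, and conditioned on $v\in S$ the count $|N^-_D(v)\cap W\cap S|$ concentrates around $\tfrac{k}{n}|N^-_D(v)\cap W|$; in both cases Chernoff--Hoeffding bounds give deviation probabilities of the form $e^{-c\lambda^2/k}$. A first application, union bounded over the $k$ vertices of $S$, shows that with probability at least $1-n^{-2}$ every vertex of $D[S]$ has in- and out-degree $(\alpha\pm o(1))k$; I condition on this event. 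A putative failure of $(\nu',4\tau)$-outexpansion is then witnessed by a set $A\subseteq S$ with $4\tau k\le |A|\le (1-4\tau)k$ together with $B:=S\setminus RN^+_{\nu',D[S]}(A)$, so that $|B|>k-|A|-\nu' k$ and every vertex of $B$ has fewer than $\nu' k$ in-neighbours in $A$ (equivalently $e_D(A\to B)<\nu' k\,|B|$, using $A\subseteq S$).

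I would split according to $a:=|A|$. When $a\le(\alpha-2\nu')k$, double-counting $\sum_{u\in A}d^+_{D[S]}(u)=\sum_{v\in S}|N^-_{D[S]}(v)\cap A|$ against the almost-regularity of $D[S]$ forces $|RN^+_{\nu',D[S]}(A)|\ge(\alpha-o(1))k\ge a+\nu' k$, excluding a witness \emph{deterministically} for all such $A$ at once; symmetrically, when $a\ge(1-\alpha+2\nu')k$ every vertex of $S$ lies in $RN^+_{\nu',D[S]}(A)$, since it loses at most $k-a$ in-neighbours in passing from $S$ to $A$, again excluding witnesses at once. This leaves the middle range $(\alpha-2\nu')k<a<(1-\alpha+2\nu')k$ (empty when $\alpha\ge\tfrac12$), which is exactly where the minimum-degree bound $|\{v:|N^-_D(v)\cap A|<\nu' k\}|\le(1-\alpha+o(1))n$ is too weak and $D$'s expansion must enter.

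For the middle range the mechanism is to dilate: choose $U\supseteq A$ with $U\cap S=A$ and $|U|=\lceil\tfrac{n}{k}a\rceil\in[4\tau n,(1-4\tau)n]\subseteq[\tau n,(1-\tau)n]$ (padding from $V(D)\setminus S$, which is feasible as $|U|-a=a\tfrac{n-k}{k}\le n-k$). Robust $(\nu,\tau)$-outexpansion then bounds the low-in-degree set $Z:=\{v:|N^-_D(v)\cap U|<\nu n\}$ by $|Z|\le n-|U|-\nu n$, so $\tfrac{k}{n}|Z|\le k-a-\nu k$. The point is that the witness $B$ should lie in $Z$, forcing $|Z\cap S|\ge|B|>k-a-\nu' k$ — an upward deviation of the hypergeometric variable $|Z\cap S|$ above its mean $k-a-\nu k+o(k)$ by at least $(\nu-\nu')k=\Omega(k)$, whose probability Chernoff bounds by $e^{-c(\nu-\nu')^2k}=e^{-c'\varepsilon n}$. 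I expect the hard part to be twofold, and it lives entirely in this middle range for small $k$ (where the witnesses sit below the scale $\tau n$ at which $D$ is assumed to expand). First, the padding $U\setminus A$ must be selected so that $B$ has few in-neighbours in all of $U$, not merely in $A$; guaranteeing enough such ``clean'' padding in $V(D)\setminus S$ is delicate because the defect of each $w\in B$ is measured per-vertex against the threshold $\nu n$. Second, and more seriously, the accompanying union bound cannot range naively over the $2^{\Theta(n)}$ possible dilations $U$, since the per-configuration bound $e^{-c'\varepsilon n}$ is far too weak for that; instead I would count only the genuinely sparse witnesses $(A,B)$ — using that $B$ is confined to a low-in-degree set whose size is controlled by the minimum-degree and expansion estimates — and weight each by $\Pr[A\cup B\subseteq S]$. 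Balancing this count against the concentration, with the quantitative slack between $\nu$ and $\nu'$ and between $\tau$ and $4\tau$ supplying the $\Omega(k)$ deviation margin that makes it close, is the crux of the proof.
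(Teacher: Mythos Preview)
The paper does not prove this lemma: it is quoted as \cite[Lemma~14.3]{MR4550147} and invoked as a black box, so there is no in-paper argument to compare your attempt against.

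As for the sketch itself, it is by your own description incomplete. The monotonicity step for the first clause is fine, and the degree-counting disposals of the extreme ranges of $a=|A|/k$ are plausible (though you should check the upper boundary $a\approx\alpha-2\nu'$ more carefully; the double-count gives $|RN^+|\ge\frac{a\alpha-\nu'}{a-\nu'}\,k$, and whether this exceeds $(a+\nu')k$ near $a=\alpha$ is sensitive to the hierarchy). In the middle range you identify two genuine obstacles and resolve neither. First, padding $A$ to $U$ by arbitrary vertices of $V(D)\setminus S$ gives no control over $|N^-_D(v)\cap U|$ for $v\in B$, so there is no reason $B\subseteq Z$; you note this but offer no mechanism for choosing ``clean'' padding. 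Second, the union bound does not close as written: even restricting to sparse witnesses, the number of candidate pairs $(A,B)$ inside a fixed $S$ is of order $e^{\Theta(k)}$, while the per-witness tail you derive is $e^{-c(\nu-\nu')^2 k}$ with $(\nu-\nu')^2\ll 1$, so the entropy dominates. ``Count only the genuinely sparse witnesses and weight by $\Pr[A\cup B\subseteq S]$'' is a reasonable heuristic, but until it is carried out with explicit bounds the argument remains a plan rather than a proof.
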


We also need the following result  by K\"uhn and Osthus.

\begin{lem}[{\cite[Lemma 3.2]{KO2014}}]\label{expander-split}  Suppose that $0 < 1/n \ll \eta \ll \nu, \tau, \alpha, \lambda, 1-\lambda < 1$.
	Let $G$ be a digraph on $n$ vertices with $\delta^0(G) \ge \alpha n$ which is a
	robust $(\nu,\tau)$-outexpander. Then $G$ can be split into two edge-disjoint
	spanning subdigraphs $G_1$ and $G_2$ such that the following two properties hold.
	\begin{enumerate}[(i)]
		\item
		$d^+_{G_1}(x) = (1 \pm \eta)\lambda d^+_G(x)$ and $d^-_{G_1}(x) = (1 \pm \eta)\lambda d^-_G(x)$ for every $x \in V(G)$.
		\item $G_1$ is a robust $(\lambda \nu/2,\tau)$-outexpander and $G_2$ is a robust $((1-\lambda)\nu/2,\tau)$-outexpander.
	\end{enumerate}
\end{lem}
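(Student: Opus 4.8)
The plan is to construct $G_1$ by a \emph{random edge-partition} and to verify the two conclusions via concentration. Concretely, I would place each edge of $G$ independently into $G_1$ with probability $\lambda$ and into $G_2$ with probability $1-\lambda$, so that $G_1$ and $G_2$ are edge-disjoint spanning subdigraphs with $E(G_1)\cup E(G_2)=E(G)$. All the randomness then lives in independent Bernoulli choices, one per edge, and the key structural observation is that both quantities we must control — vertex semidegrees and sizes of robust outneighborhoods — are sums of independent indicators over \emph{disjoint} edge sets, so Chernoff-type bounds apply cleanly.

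For property (i), fix $x\in V(G)$. Then $d^+_{G_1}(x)$ is a sum of $d^+_G(x)$ independent $\mathrm{Bernoulli}(\lambda)$ variables, one per out-edge at $x$, with $\mathbb{E}[d^+_{G_1}(x)]=\lambda d^+_G(x)$. Since $\delta^0(G)\ge \alpha n$ we have $d^+_G(x)\ge \alpha n$, so the mean is $\Omega(n)$ and a Chernoff bound gives $\Pr[\,|d^+_{G_1}(x)-\lambda d^+_G(x)|>\eta\lambda d^+_G(x)\,]\le 2\exp(-c\eta^2\lambda\alpha n)$ for an absolute constant $c>0$; the same holds for $d^-_{G_1}(x)$. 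A union bound over the $2n$ relevant events, together with $1/n\ll \eta,\alpha,\lambda$, makes (i) hold with probability $\ge 1-4n\exp(-c\eta^2\lambda\alpha n)\to 1$, and the analogous statement for $G_2$ follows from $d^{\pm}_{G_2}(x)=d^{\pm}_G(x)-d^{\pm}_{G_1}(x)$.

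For property (ii) I would show that $G_1$ inherits robust expansion at the reduced parameter $\lambda\nu/2$. Fix $S\subseteq V(G)$ with $\tau n\le |S|\le (1-\tau)n$ and set $W=RN^+_{\nu,G}(S)$, so $|W|\ge |S|+\nu n$. For $y\in W$ the number $m_y$ of inneighbors of $y$ in $S$ is at least $\nu n$ in $G$, and each such in-edge is retained in $G_1$ independently with probability $\lambda$; hence the count $X_y$ of $S$-inneighbors of $y$ in $G_1$ satisfies $\mathbb{E}[X_y]=\lambda m_y\ge \lambda\nu n$, and the Chernoff lower tail gives $\Pr[\,X_y<\lambda\nu n/2\,]\le \exp(-\lambda\nu n/8)$, i.e. $y\notin RN^+_{\lambda\nu/2,G_1}(S)$ with that probability. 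Crucially these failure events are \emph{mutually independent} across $y\in W$, as they depend on the disjoint in-stars at distinct vertices, so the probability that more than $\nu n/2$ of them fail is at most $\binom{|W|}{\nu n/2}\bigl(\exp(-\lambda\nu n/8)\bigr)^{\nu n/2}\le 2^n\exp(-\lambda\nu^2 n^2/16)$. On the complementary event we keep at least $|W|-\nu n/2\ge |S|+\nu n/2\ge |S|+\lambda\nu n/2$ vertices in $RN^+_{\lambda\nu/2,G_1}(S)$ (using $\lambda\le 1$), which is precisely the required expansion inequality; replacing $\lambda$ by $1-\lambda$ handles $G_2$ identically.

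The step I expect to be the crux is the union bound over the up to $2^n$ choices of $S$ in property (ii): a per-set estimate is worthless unless each set fails with probability far below $2^{-n}$. This is exactly what the above affords, because the deviations demanded are linear in $n$ — both the surplus $|W|-|S|\ge \nu n$ and the robust threshold $\lambda\nu n/2$ scale with $n$ — so the bad-count estimate is of order $\exp(-\Theta(n^2))$, which swamps the $2^n$ sets (and, after a further union over $G_1$ and $G_2$, still tends to $0$). Combining this with the degree estimate of (i) by one final union bound, for $n$ large the random split satisfies both (i) and (ii) with strictly positive probability, so a splitting with the asserted properties exists.
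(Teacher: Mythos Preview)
Your argument is correct: the random edge-split with Chernoff bounds for the degrees, and the independence of in-stars to control the robust outneighborhood for each fixed $S$ followed by a union bound over all $2^n$ sets, is exactly the standard route and all the estimates go through as you wrote them. Note, however, that this paper does not prove the lemma at all; it is quoted verbatim from K\"uhn and Osthus~\cite{KO2014} (their Lemma~3.2), and the proof there follows precisely the random-partition strategy you outline.
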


Let $D$ be a  multidigraph  on vertex set  $V$. Let $L$ be  a multiset consisting of paths on $V$ and isolated vertices.
We use $V(L)$ to denote the set of vertices that are  either isolated in $L$  or on the non-trivial  paths contained in $L$,
 use $E(L)$ for the multiset of edges that are contained in paths of $L$,
 and for $v\in V(L)$, we use $d_L(v)$ to denote the number of edges from $E(L)$ that have $v$
 as one endvertex.   Let $F$ be a multiset of edges.
We say $(L,F)$ is a \emph{layout} if $F \subseteq E(L)$ and $E(L) \setminus F \neq \emptyset$.
We define a layout for a multigraph exactly the same way as above.  When $F$ is a multidigraph or multigraph,
we also simply say that $(L,F)$ is a layout instead of $(L,E(F))$ being a layout.

Given an edge $uv$, we say that a  $(u,v)$-path  has  \emph{shape $uv$}.
Let $(L,F)$ be a layout on $V$. A multidigraph $\mathcal{H}$ on $V$ is a \emph{spanning configuration of shape $(L,F)$} if $\mathcal{H}$ can be decomposed into internally vertex-disjoint paths $\{P_e \mid e \in E(L)\}$ such that each $P_e$ has shape $e$; $P_f = f$ for all $f \in F$; and $\bigcup_{e \in E(L)} V^0(P_e) = V \setminus V(L)$ (given a path $P$, $V^0(P)$ denotes the set of internal vertices of $P$). (Note that the last equality implies that the isolated vertices of $L$ remain isolated in $\mathcal{H}$.)
A spanning configuration of shape $(L,F)$ for a  multigraph is defined the same way as above.
We will need the following result by  Gir\~ao, Granet, K\"uhn, Lo, and Osthus.

\begin{LEM}[{\cite[Lemma 7.3]{MR4550147}}]\label{lem:7.3}
	Let $0 < \frac{1}{n} \ll \varepsilon \ll \nu \ll \tau \ll \gamma \ll \eta, \delta \le 1$. Suppose $\ell \in \mathbb{N}$ satisfies $\ell \le (\delta - \eta)n$. If $\ell \le \varepsilon^2 n$, then let $p \le n^{-1}$; otherwise, let $p \le n^{-2}$. Let $D$ and $\Gamma$ be edge-disjoint digraphs on a common vertex set $V$ of size $n$. Suppose that $D$ is $(\delta,\varepsilon)$-almost regular and $\Gamma$ is $(\gamma,\varepsilon)$-almost regular. Suppose further that $\Gamma$ is an $(\varepsilon,p)$-robust $(\nu,\tau)$-outexpander. Let $\mathcal{F}$ be a multiset of directed edges on $V$. Any edge in $\mathcal{F}$ is considered to be distinct from the edges of $D \cup \Gamma$, even if the starting and ending points are the same. Let $F_1,\dots,F_\ell$ be a partition of $\mathcal{F}$. Assume that $(L_1,F_1),\dots,(L_\ell,F_\ell)$ are layouts such that $V(L_i) \subseteq V$ for each $i \in [1,\ell]$ and the following hold, where $L := \bigcup_{i \in [1,\ell]} L_i$.
	\begin{enumerate}[(a)]
		\item For each $i \in [1,\ell]$, $|V(L_i)| \le \varepsilon^2 n$ and $|E(L_i)| \le \varepsilon^4 n$.
		\item Moreover, for each $v \in V$, $d_L(v) \le \varepsilon^3 n$ and there exist at most $\varepsilon^2 n$ indices $i \in [1,\ell]$ such that $v \in V(L_i)$.
	\end{enumerate}
	Then there exist edge-disjoint submultidigraphs $\mathcal{H}_1,\dots,\mathcal{H}_\ell \subseteq D \cup \Gamma \cup \mathcal{F}$ such that, for each $i \in [1,\ell]$, $\mathcal{H}_i$ is a spanning configuration of shape $(L_i,F_i)$ and  that $D\cup \Gamma -\bigcup_{i \in [1,\ell]} E(\mathcal{H}_i)$ is a robust $(\nu/2,\tau)$-outexpander.
\end{LEM}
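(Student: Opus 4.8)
The plan is to read Lemma~\ref{lem:7.3} as a \emph{near-decomposition} statement and prove it by an iterative, one-layout-at-a-time embedding. The crucial degree bookkeeping is this: a spanning configuration $\mathcal{H}_i$ makes every vertex outside $V(L_i)$ an internal vertex of exactly one of its paths, so $\mathcal{H}_i$ contributes at most one out-edge and one in-edge of $D\cup\Gamma$ at each vertex. Over all layouts this is at most $\ell\le(\delta-\eta)n$ out-edges and in-edges per vertex, and since $d^{\pm}_{D\cup\Gamma}(v)\ge\delta n$ there is a surplus of at least $\eta n$ in each direction. I would spend the dense, almost-regular graph $D$ on the \emph{bulk} internal-path edges and reserve $\Gamma$ purely as a \emph{connector}, used only to splice pieces together and to absorb a handful of stray vertices. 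The point of this split is that the final outexpansion of the leftover will come entirely from the surviving part of $\Gamma$: if we only ever delete few $\Gamma$-edges per vertex, then $\Gamma$ minus the used connector edges is still a robust outexpander by Lemma~\ref{lem:stability-expansion}(a), and since adding edges can only enlarge robust outneighborhoods, the full leftover $D\cup\Gamma-\bigcup_i E(\mathcal{H}_i)$, which contains it, is a robust outexpander as well.

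For a single layout $(L_i,F_i)$ I would first place the fixed edges of $F_i$ verbatim, as required by $P_f=f$ (these come from $\mathcal{F}$ and cost nothing in $D\cup\Gamma$), and then treat each remaining skeleton edge $e=uv\in E(L_i)\setminus F_i$ as a demand for a long internal-vertex-disjoint $(u,v)$-path. Writing $k_i:=|E(L_i)\setminus F_i|\le|E(L_i)|\le\varepsilon^4 n$, this is exactly a Hamilton-linkage problem for $k_i$ pairs on the vertices not already occupied by the fixed structure. Since $\varepsilon\ll\nu$ we have $k_i\le\nu^4 n$, so the engine is the directed counterpart of the Hamilton-linkedness property of robust outexpanders (cf.\ Lemma~\ref{lem:hamiltonicity-of-expander}), applied to the current connector graph together with the high-degree almost-regular bulk from $D$. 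Carried out in this combined digraph, this produces vertex-disjoint $(x_j,y_j)$-paths that together cover all remaining vertices, i.e.\ $\bigcup_e V^0(P_e)=V\setminus V(L_i)$, using almost all their edges from $D$ and only few expansion-critical edges from $\Gamma$.

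I would iterate this over $i=1,\dots,\ell$, always drawing the configuration's edges from the \emph{current} leftover $D^{(i)}\cup\Gamma^{(i)}$, which guarantees edge-disjointness automatically. Two invariants must be maintained. First, $D^{(i)}\cup\Gamma^{(i)}$ stays a robust outexpander with minimum semidegree at least $\eta n$: the bulk deletions from $D$ amount to at most $i-1\le\ell\le(\delta-\eta)n$ per vertex, and the connector deletions from $\Gamma$ are kept small, so $\Gamma^{(i)}$ remains an outexpander by Lemma~\ref{lem:stability-expansion}(a), and Fact~\ref{fact:robust} together with the $(\varepsilon,p)$-robustness supplied by Lemma~\ref{lem:robustscale} lets every randomly chosen sub-structure used inside the linkage step be an outexpander as well, with the failure probability controlled by a union bound over the $\ell$ steps (using $p\le n^{-1}$ or $p\le n^{-2}$ according to the size of $\ell$, exactly as in the hypothesis). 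Second, conditions (a) and (b) are precisely what cap the connector demand: each $v$ lies in at most $\varepsilon^2 n$ of the $V(L_i)$ and has $d_L(v)\le\varepsilon^3 n$, so the splicing never exhausts $v$'s budget, and the accumulated $\Gamma$-deletions per vertex stay below $(\nu/2)n$. Applying Lemma~\ref{lem:stability-expansion}(a) to $\Gamma$ one final time and invoking monotonicity of outexpansion under adding the surviving $D$-edges yields that the leftover is a robust $(\nu/2,\tau)$-outexpander.

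The main obstacle I anticipate is the \emph{simultaneity combined with the spanning requirement}. Each $\mathcal{H}_i$ must be spanning, hence has about $n$ edges, so every vertex is forced to appear as an internal vertex in almost all of the up-to-linearly-many configurations at once; yet the configurations must be pairwise edge-disjoint, each internally vertex-disjoint, and the \emph{precise} outexpansion $\nu/2$ of the leftover must survive. The delicate part is therefore the connector accounting: one must realize each linkage using so few $\Gamma$-edges, distributed so evenly, that the per-vertex total over all $\ell$ steps stays under $(\nu/2)n$, while still absorbing every stray vertex through $\Gamma$ without violating vertex-disjointness inside a configuration or overspending the $\eta n$ bulk surplus at any vertex. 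Making these balance and concentration estimates hold simultaneously for all $\ell$ layouts, via the $(\varepsilon,p)$-robustness and a union bound, is where the real work lies.
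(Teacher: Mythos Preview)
The paper does not prove this statement: Lemma~\ref{lem:7.3} is quoted verbatim from \cite[Lemma~7.3]{MR4550147} and used as a black box, so there is no ``paper's own proof'' to compare your proposal against. Your sketch is therefore not competing with anything in this manuscript.

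That said, your outline is broadly in the spirit of how such results are established in \cite{MR4550147}: reserve $\Gamma$ as a connector/absorber, spend $D$ on bulk path edges, and iterate a Hamilton-linkage step while tracking per-vertex budgets so that the surviving $\Gamma$ still outexpands. But as written it is only a high-level plan, not a proof. Two points deserve caution. First, the actual argument in \cite{MR4550147} does not simply run Lemma~\ref{lem:hamiltonicity-of-expander} (or its directed analogue) $\ell$ times on the current leftover; it relies on a more structured random partitioning of $V$ and of $D\cup\Gamma$ into pieces, together with the $(\varepsilon,p)$-robustness hypothesis, precisely to guarantee that the connector cost at each vertex over all $\ell$ steps stays below $(\nu/2)n$ \emph{uniformly}. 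Your sentence ``the splicing never exhausts $v$'s budget'' is the crux and is not justified by the bounds you cite: conditions (a) and (b) control how often $v$ appears in the layouts, but a naive Hamilton-linkage call gives you no control over how many $\Gamma$-edges at $v$ are consumed when $v$ is merely an internal vertex, which happens in essentially every step. Second, the role of the dichotomy $p\le n^{-1}$ versus $p\le n^{-2}$ is tied to a union bound over random subsets chosen during the construction, and your sketch does not specify what random objects are being sampled or why the number of events matches the stated thresholds. If you intend to supply an independent proof, these are the two places where real work is required.
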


Results on  robust outexpanders can be modified for robust expanders. We will apply the result below
and adapt Lemma~\ref{lem:7.3} for robust expanders.

\begin{LEM}[{\cite[Lemma 3.1]{KO2014}}]\label{lem:orient}
	Suppose that $0 < 1/n \ll \eta' \ll \nu, \tau, \alpha < 1$, and  that $G$ is a robust $(\nu,\tau)$-expander on $n$ vertices with $\delta(G) \ge \alpha n$. Then one can orient the edges of $G$ in such a way that the oriented digraph $D$ thus obtained from $G$ satisfies the following properties:
	\begin{itemize}
		\item[(i)] $D$ is a robust $(\nu/4, \tau)$-outexpander.
		\item[(ii)] $d^+_{D}(x) = (1 \pm \eta') d_G(x)/2$ and $d^-_{D}(x) = (1 \pm \eta') d_G(x)/2$ for every vertex $x$ of $D$.
	\end{itemize}
\end{LEM}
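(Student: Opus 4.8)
The plan is to take a uniformly random orientation of $G$ and to show that it satisfies both conclusions with high probability: orient each edge independently, each of the two directions with probability $1/2$, and call the resulting digraph $D$. I would verify properties (ii) and (i) separately, relying on the hierarchy $1/n \ll \eta' \ll \nu,\tau,\alpha$ to absorb all error probabilities. Since the two events I analyse each hold with probability $1-o(1)$, a suitable orientation exists.

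Property (ii) is routine. For a fixed vertex $x$, the outdegree $d^+_D(x)$ is a sum of $d_G(x)\ge\alpha n$ independent Bernoulli$(1/2)$ variables with mean $d_G(x)/2$, so a Chernoff bound shows that $d^+_D(x)=(1\pm\eta')d_G(x)/2$ fails with probability at most $e^{-\Theta(\eta'^2\alpha n)}$, and likewise for $d^-_D(x)$. As $\eta'^2\alpha n\gg\log n$, a union bound over the $n$ vertices and both degrees gives (ii).

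Property (i) is the heart of the matter, and I would begin by reformulating what it means for out-expansion to fail. Fix $S$ with $\tau n\le|S|\le(1-\tau)n$; undirected robust expansion gives $|RN_G(S)|\ge|S|+\nu n$. If $|RN^+_{\nu/4,D}(S)|<|S|+(\nu/4)n$, then $W:=V(G)\setminus RN^+_{\nu/4,D}(S)$ has $|W|>n-|S|-(\nu/4)n$, and since $|V(G)\setminus RN_G(S)|\le n-|S|-\nu n$, the set $B:=W\cap RN_G(S)$ has $|B|\ge(3\nu/4)n$. Every vertex of $B$ has at least $\nu n$ neighbours in $S$ yet receives fewer than $(\nu/4)n$ edges from $S$. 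Thus it suffices to show that, with high probability, no set $B$ of $\lceil(3\nu/4)n\rceil$ vertices, each having at least $\nu n$ neighbours in $S$, has all of its members receiving fewer than $(\nu/4)n$ edges from $S$. The key step is to recast this vertex statement as one about a single edge count, where independence is transparent: let $I$ be the number of edges oriented from $S$ into $B$, so on the failure event $I<|B|\cdot(\nu/4)n$. Each edge with both endpoints in $S\cap B$ contributes exactly $1$ to $I$ regardless of its orientation, while every other edge counted by $I$ contributes an independent Bernoulli$(1/2)$. Writing $I$ as this deterministic part plus a sum $X$ of independent indicators, one checks that the failure event forces $X$ to fall short of its mean by at least $\frac14|B|\nu n=\Omega(\nu^2 n^2)$; since $X$ is a sum of at most $n^2$ independent indicators, Hoeffding's inequality bounds this by $e^{-\Theta(\nu^4 n^2)}$ for each fixed pair $(S,B)$. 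A union bound over the at most $2^n$ choices of $S$ and at most $2^n$ choices of $B$, together with $4^n e^{-\Theta(\nu^4 n^2)}=o(1)$, establishes (i).

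The main obstacle, and the reason the naive argument fails, is precisely this union bound over exponentially many sets $S$: a per-vertex Chernoff estimate only gives failure probability $e^{-\Theta(\nu n)}$ per vertex, which is hopeless against $2^n$. What rescues the argument is that the robust out-neighbourhood can shrink only if linearly many vertices simultaneously lose almost all their edges from $S$, and this collective failure is controlled by a single edge count deviating by $\Theta(\nu^2 n^2)$, yielding the far stronger bound $e^{-\Theta(\nu^4 n^2)}$. The one technical point to track carefully is the separation of the edges internal to $S\cap B$, which contribute deterministically and must be removed before the concentration inequality is applied.
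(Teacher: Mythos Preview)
The paper does not prove this lemma; it is quoted from \cite{KO2014} (Lemma~3.1 there) and used as a black box, so there is no in-paper proof to compare against. Your random-orientation argument is correct and is essentially the standard proof of such statements: orienting each edge uniformly and independently, Chernoff gives (ii) directly, and for (i) the key move of aggregating the per-vertex shortfall into a single edge-count deviation of order $\nu^2 n^2$ is precisely what is needed so that Hoeffding's bound $e^{-\Theta(\nu^4 n^2)}$ beats the $2^{O(n)}$ union bound over pairs $(S,B)$. The only point I would make explicit in a full write-up is the lower bound $\mathbb{E}[I]\ge\tfrac12|B|\nu n$, which follows from $\sum_{v\in B}|N_G(v)\cap S|\ge|B|\nu n$ and is what underlies your claimed shortfall of at least $\tfrac14|B|\nu n$.
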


We now adapt Lemma~\ref{lem:7.3} for robust expanders.

 \begin{thm}\label{thm:hamilton2}
 Let $0 < \frac{1}{n}  \ll \ve^* \le  \ve \ll   \nu' \ll \nu \ll \tau  \ll \gamma \ll   \alpha,\eta^* \ll 1$, and let
$\ell \le \frac{1}{2}(\alpha - \eta^*)n$. Suppose $G$ is an $(\alpha,\ve^*)$-almost regular  robust $(\nu,\tau)$-expander on $n$ vertices. Suppose that, for each $i \in [1,\ell]$, $F_i$ is a multiset of edges on $V(G)$.
 Assume that $(L_1,F_1),\dots,(L_\ell,F_\ell)$ are layouts such that $V(L_i) \subseteq V(G)$ for each $i \in [1,\ell]$ and the following hold, where $L := \bigcup_{i \in [1,\ell]} L_i$.
 \begin{enumerate}[(a)]
 	\item For each $i \in [1,\ell]$, $|V(L_i)| \le \varepsilon^2 n$ and $|E(L_i)| \le \varepsilon^4 n$.
 	\item Moreover, for each $v \in V(G)$, $d_L(v) \le \varepsilon^3 n$ and there exist at most $\varepsilon^2 n$ indices $i \in [1,\ell]$ such that $v \in V(L_i)$.
 \end{enumerate}
 Then there exist edge-disjoint  submultigraphs   $\mathcal{H}_1,\ldots,\mathcal{H}_{\ell} \subseteq G + \bigcup_{i \in [1,\ell]} F_i$ such that each $\mathcal{H}_i$ is a spanning configuration of shape $(L_i,F_i)$ and that
 $G-\bigcup_{i=1}^\ell E(\mathcal{H}_i)$
 is a robust $(\nu'/2,4\tau)$-expander.
 \end{thm}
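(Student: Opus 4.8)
The plan is to reduce the statement to its directed analogue, Lemma~\ref{lem:7.3}, by orienting everything in sight, invoking the digraph version, and then passing back to the underlying undirected structures. First I would orient the graph and the skeletons. By Lemma~\ref{lem:orient}, applied with an auxiliary constant $\eta'\ll\ve$, the edges of $G$ can be oriented to give a digraph $D_0$ that is a robust $(\nu/4,\tau)$-outexpander with $d^+_{D_0}(x),d^-_{D_0}(x)=(1\pm\eta')d_G(x)/2$ for every $x$. As $G$ is $(\alpha,\ve^*)$-almost regular with $\ve^*\le\ve$, the digraph $D_0$ is $(\alpha/2,\ve)$-almost regular. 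Simultaneously I would fix an arbitrary orientation of each path of each $L_i$ and orient the edges of $F_i$ consistently, obtaining directed layouts $(L_i',F_i')$ on $V(G)$; since orienting a path changes neither its vertex set, its edge multiset, nor any degree, each $(L_i',F_i')$ is again a layout satisfying (a) and (b), and $\mathcal{F}:=\bigcup_i F_i'$ is a multiset of directed edges which we regard as distinct from $E(D_0)$, exactly as Lemma~\ref{lem:7.3} requires.

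Next I would build the two digraphs that Lemma~\ref{lem:7.3} consumes. Applying the splitting Lemma~\ref{expander-split} to $D_0$ with $\lambda:=2\gamma/\alpha$ and a tiny parameter $\eta$ produces edge-disjoint spanning subdigraphs $\Gamma$ and $D$ with $D_0=D\cup\Gamma$, where $\Gamma$ is $(\gamma,\ve)$-almost regular and a robust $(\gamma\nu/(4\alpha),\tau)$-outexpander, while $D$ is $(\alpha/2-\gamma,\ve)$-almost regular. Since $\delta^0(\Gamma)\ge\gamma n/2$, Lemma~\ref{lem:robustscale} then upgrades $\Gamma$ to an $(\ve,n^{-2})$-robust $(\nu',4\tau)$-outexpander, where the choice of $\nu'$ is legitimate precisely because $\nu'\ll\gamma\nu/(4\alpha)$ (permitted since $\gamma,\nu,\alpha$ are fixed before $\nu'$ in the hierarchy). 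This is exactly the step that enlarges the scale $\tau$ to $4\tau$ and lowers the expansion factor to $\nu'$, matching the two parameters appearing in the conclusion.

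I would then apply Lemma~\ref{lem:7.3} to the edge-disjoint pair $D,\Gamma$ and the layouts $(L_i',F_i')$, instantiating its parameters as $\delta=\alpha/2-\gamma$, reservoir density $\gamma$, expansion $\nu'$, scale $4\tau$, and robustness radius a fixed multiple of $\ve$; here Fact~\ref{fact:robust} relaxes the robustness of $\Gamma$, and the monotonicity of (a), (b) in $\ve$ absorbs the additive degree errors from orienting and splitting. The hypothesis $\ell\le\tfrac12(\alpha-\eta^*)n$ gives $\ell\le(\delta-\eta_0)n$ for some $\eta_0\in(\gamma,\eta^*/2)$, as needed since $\gamma\ll\eta^*$. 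Lemma~\ref{lem:7.3} returns edge-disjoint directed spanning configurations $\mathcal{H}_1',\dots,\mathcal{H}_\ell'\subseteq D_0\cup\mathcal{F}$ of shapes $(L_i',F_i')$ with $D_0-\bigcup_i E(\mathcal{H}_i')$ a robust $(\nu'/2,4\tau)$-outexpander. Taking underlying undirected (multi)graphs, each $\mathcal{H}_i'$ becomes a spanning configuration $\mathcal{H}_i$ of shape $(L_i,F_i)$ (a directed path of shape $\vec{e}$ un-orients to an undirected path of shape $e$, the edges of $F_i'$ stay single edges, and the internal vertices still exhaust $V(G)\setminus V(L_i)$), the $\mathcal{H}_i$ are edge-disjoint submultigraphs of $G+\bigcup_i F_i$, and $G-\bigcup_i E(\mathcal{H}_i)$ is the underlying undirected graph of $D_0-\bigcup_i E(\mathcal{H}_i')$. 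Finally, the underlying undirected graph $G'$ of any robust $(\nu'/2,4\tau)$-outexpander is a robust $(\nu'/2,4\tau)$-expander: for every $S$ with $4\tau n\le|S|\le(1-4\tau)n$, each vertex of the $(\nu'/2)$-robust outneighborhood of $S$ has at least $(\nu'/2)n$ inneighbors in $S$, hence at least $(\nu'/2)n$ neighbors in $S$ in $G'$, so it lies in $RN_{G'}(S)$; thus $|RN_{G'}(S)|\ge|S|+(\nu'/2)n$, which gives the theorem.

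I expect the main obstacle to be the parameter bookkeeping rather than any isolated combinatorial idea: one must verify that the composite orient $\to$ split $\to$ robustscale $\to$ Lemma~\ref{lem:7.3} turns the single chain $\ve^*\le\ve\ll\nu'\ll\nu\ll\tau\ll\gamma\ll\alpha,\eta^*$ into a $\Gamma$ that is at once $(\gamma,\ve)$-almost regular and an $(\ve,n^{-2})$-robust $(\nu',4\tau)$-outexpander, and a $D$ that is $(\alpha/2-\gamma,\ve)$-almost regular, while keeping every additive degree error inside the single tolerance $\ve$ that Lemma~\ref{lem:7.3} allows. In particular the choice $\lambda=2\gamma/\alpha$ must make $\Gamma$ simultaneously small enough to have density $\gamma$ and expanding enough for Lemma~\ref{lem:robustscale} to apply, and $\nu'$ must be pinned below $\gamma\nu/(4\alpha)$ so that the reservoir survives the rescaling with exactly the target factor $\nu'/2$.
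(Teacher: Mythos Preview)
Your proposal is correct and follows essentially the same route as the paper: orient $G$ via Lemma~\ref{lem:orient}, split off a reservoir $\Gamma$ via Lemma~\ref{expander-split} with $\lambda=2\gamma/\alpha$, upgrade $\Gamma$ using Lemma~\ref{lem:robustscale}, feed everything into Lemma~\ref{lem:7.3}, and read off the undirected conclusion. You are in fact more explicit than the paper on two points it leaves implicit---namely orienting the layouts $(L_i,F_i)$ and spelling out why a robust $(\nu'/2,4\tau)$-outexpander has a robust $(\nu'/2,4\tau)$-expander as its underlying graph. One small calibration: the paper instantiates the $\varepsilon$-parameter of Lemma~\ref{lem:7.3} as $\varepsilon^{1/5}$ rather than a constant multiple of $\varepsilon$, to safely absorb the cumulative degree errors from orienting and splitting; either choice works under the hierarchy, but ``a fixed multiple of $\varepsilon$'' should be replaced by a suitable root of $\varepsilon$ if you want to match the paper's bookkeeping exactly.
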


\proof  Let  $\eta'$ be defined as  in Lemma~\ref{lem:orient} such that   $1/n \ll \eta' \ll \ve^*$.
By Lemma~\ref{lem:orient}, we can orient the edges of $G$ to get a digraph $D$
such that $D$ is a robust $(\nu/4, \tau)$-outexpander with
$d^+_{D}(x) = (1 \pm \eta') d_G(x)/2$ and $d^-_{D}(x) = (1 \pm \eta') d_G(x)/2$ for every vertex $x$ of $D$.
Since $G$ is $(\alpha,\ve^*)$-almost regular, it follows that
$d^+_{D}(x), d^-_{D}(x)= (\frac{1}{2} \alpha \pm \frac{1}{2}(\ve^*+\eta')) n$. Thus $D$ is  $(\frac{1}{2} \alpha, \frac{1}{2}(\ve^*+\eta'))$-almost regular.
As  $\ve^* \le  \ve$,  $D$  is also $(\frac{1}{2} \alpha, \ve)$-almost regular.

Taking  $\lambda =\frac{2\gamma}{\alpha}$ and applying
Lemma~\ref{expander-split},  we split $D$
into two edge-disjoint spanning subdigraphs $\Gamma$ and $D'$
such that $\Gamma$ is  a  $(\gamma,\ve)$-almost regular
  robust $(\gamma \nu/(4\alpha),\tau)$-outexpander,
  and $D'$ is a $(\alpha/2-\gamma, \ve)$-almost regular robust
  $((\alpha-2\gamma)\nu/(8\alpha),\tau)$-outexpander.
  As $\nu' \ll \nu \ll \gamma \ll \alpha$ and so  $\nu'$ can be chosen so that $\nu' \ll \gamma \nu/(4\alpha)$,
  by Lemma~\ref{lem:robustscale},
  $\Gamma$ is an $(\ve, n^{-2})$-robust $(\nu',4\tau)$-outexpander.

  Applying Lemma~\ref{lem:7.3} with $D'$, $\alpha/2-\gamma$, $\nu'$, $\varepsilon^{1/5}$, and $\eta^*/2$ playing the roles of $D,\delta,\nu,\varepsilon$, and $\eta$,
we find edge-disjoint submultidigraphs $\mathcal{H}_1,\dots,\mathcal{H}_\ell \subseteq D' \cup \Gamma + \bigcup_{i \in [1,\ell]} F_i$ such that, for each $i \in [1,\ell]$, $\mathcal{H}_i$ is a spanning configuration of shape $(L_i,F_i)$ and $D' \cup \Gamma-\bigcup_{i=1}^\ell E(\mathcal{H}_i)$ is a robust $(\nu'/2,4\tau)$-outexpander.  The latter implies that $G-\bigcup_{i=1}^\ell E(\mathcal{H}_i)$
is a robust $(\nu'/2,4\tau)$-expander.
\qed

\section{Proof of Theorem~\ref{thm:expander}}

For a graph $G$ and an integer $i\ge 0$, define $V_i(G)=\{v\in V(G)\mid d_G(v)=i\}$,  $W(G)=V(G)\setminus (V_\delta(G) \cup V_{\Delta}(G))$, and $g(G)=\Delta(G)-\delta(G)$.
We will reduce Theorem~\ref{thm:expander} into the statement below.

\begin{thm}\label{thm:thm-reduced}
Let $n\in \mathbb{N}$ and suppose that  $0 < \frac{1}{n}  \ll  \eta  \ll  \nu \ll \tau  \ll   \alpha  \ll 1$.  Let
	$G$ be  a   robust $(\nu, \tau)$-expander on $n$ vertices with $\delta(G)\geq \alpha n$ and
	$U(G) =\{v\in V(G) \mid \Delta(G)-d_G(v) \ge \eta   n\}$.
	Then $\la(G)\leq \lceil\frac{\Delta(G)+1}{2}\rceil$ if $G$ satisfies one of
	the following three conditions.
	\begin{enumerate}[(1)]
		\item   $|U(G)| \ge \eta n$ or ($g(G)  \ge  \eta n$ and $|V_\delta(G) \cup V_{\delta+1}(G)|  \ge  \eta n$);
		\item ($|U(G)| < \eta n $, $g(G) \le 2$, and $|W(G)| \ge 2$) or ($1<g(G)<\eta n$ and $|V_\delta(G) \cup V_{\delta+1}(G)| \ge \eta n$);
		\item $|U(G)| < \eta n $  and $|W(G)| \le 1$.
	\end{enumerate}
\end{thm}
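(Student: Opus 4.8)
The plan is to reduce every case to decomposing a \emph{nearly $\Delta$-regular} robust expander, for which two engines are available: Theorem~\ref{thm2.13}, which Hamilton-decomposes the regular bulk, and Theorem~\ref{thm:hamilton2}, which builds a bounded number of spanning linear forests absorbing all ``irregular'' edges. Write $\Delta=\Delta(G)$, $\delta=\delta(G)$, and $N=\lceil\frac{\Delta(G)+1}{2}\rceil$. The guiding observation is that deleting edges from a linear forest leaves a linear forest, so it suffices to build a supergraph $\widehat G\supseteq G$ obtained by adding a bounded number of edges (and, when forced, at most $\varepsilon n$ new vertices) that is essentially $\Delta$-regular and still a robust expander of linear minimum degree, and then decompose $\widehat G$ into $N$ linear forests; restricting to $G$ finishes. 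Expansion is preserved along the way since adding edges only enlarges robust neighborhoods, while adding or deleting few vertices is controlled by Lemma~\ref{lem:stability-expansion}.

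For the decomposition itself I would first use the spanning-configuration machinery of Theorem~\ref{thm:hamilton2} to produce $O(\alpha n)$ spanning linear forests whose prescribed layouts $(L_i,F_i)$ route through the irregular region and carry the edges $F_i$ that were added to regularize, while the theorem guarantees the remainder $G-\bigcup_i E(\mathcal H_i)$ is still a robust expander and (after the preprocessing) regular of an even degree. Applying Theorem~\ref{thm2.13} to this remainder yields a Hamilton decomposition; breaking each Hamilton cycle into a spanning path gives one linear forest per cycle, and the deleted edges are collected into a single further linear forest, so the forest count is exactly $N$. When $\Delta$ is odd the $\Delta$-vertices are degree-tight, and I would first strip a near-perfect matching (supplied by Lemma~\ref{lem:matching-in-complement} in the complement, also used to correct the parity of the total degree) so that $\widehat G$ minus the matching has even regularity; the matching then joins the collected leftover edges to form the last forest, giving $(\Delta-1)/2+1=N$.

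The three conditions dictate only how $\widehat G$ is manufactured. Under condition~(1) there are at least $\eta n$ vertices lying well below $\Delta$ (or a large gap together with $\ge\eta n$ vertices of degree $\delta$ or $\delta+1$), so there is enough slack to raise all degrees to $\Delta$ by adding edges among the low-degree vertices, realized through Theorem~\ref{thm:degree-sequence-multigraph} and Lemma~\ref{graphic} after a matching-based parity fix. Under condition~(2) the gap is tiny ($g(G)\le 2$) or moderate with many low vertices, so $G$ is already almost regular and Theorem~\ref{thm:hamilton2} applies with the few irregular edges absorbed into layouts, no regularization being needed. Condition~(3) is the tight regime in which almost every vertex has degree exactly $\Delta$ and the remaining low-degree vertices are too few to absorb the required edges among themselves; here I would introduce at most $\varepsilon n$ new vertices, use Lemma~\ref{graphic} to realize the total deficiency as a supergraph degree sequence, and invoke Lemma~\ref{VDelta} together with Lemma~\ref{lem:stability-expansion} to keep the maximum degree at $\Delta$, the vertex count within $\varepsilon n$, and robust expansion intact.

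I expect the main obstacle to be condition~(3). There the max-degree vertices leave no slack --- when $\Delta$ is odd, all but one of the $N$ forests must use exactly two of each $\Delta$-vertex's edges --- so the leftover-edge forest must be assembled with no room to spare, while simultaneously a deficiency $\sum_v(\Delta-d_G(v))$ that may be of order $\eta n^2$ has to be absorbed by only $O(\varepsilon n)$ added vertices without pushing any degree above $\Delta$, without destroying expansion, and while respecting the parity of the total degree. Coordinating the vertex addition via Lemma~\ref{graphic}, the parity correction via Lemma~\ref{lem:matching-in-complement}, and the tight assembly of the final forest is the delicate heart of the argument; once the conversion step is set up, conditions~(1) and~(2) should follow comparatively routinely.
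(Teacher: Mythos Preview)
Your toolkit is right, but you have the strategies for conditions~(1) and~(3) essentially reversed, and each plan fails concretely. For condition~(1) you propose to reach $\Delta$-regularity by adding edges among the existing low-degree vertices. This cannot work: one may assume without loss of generality that all vertices of degree below $\Delta$ are already pairwise adjacent (inserting any missing such edge makes the problem no easier), and under that reduction there are no edges left to add among them; even without it, a single minimum-degree vertex has deficiency $g(G)$, which in condition~(1) can be linear in $n$ and far exceed the number of its non-neighbours inside the low-degree set. The paper's approach here is the one you assigned to condition~(3): it adds a \emph{large} set $X$ of new vertices (up to $\Delta+5$, hence possibly linear in $n$), uses Lemma~\ref{graphic} to build a near-regular graph on $X$, wires $V(G)\setminus V_\Delta$ into $X$, and then does the real work of verifying that the resulting $\Delta^*$-regular graph on $n+|X|$ vertices is still a robust expander. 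The hypothesis $|U|\ge\eta n$ (or its variant) is exactly what forces the total deficiency to be at least about $\eta^2 n^2$, which in turn makes the vertices of $X$ have high enough degree inside $H[X]$ for that expansion check to go through.

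For condition~(3) your plan of adding at most $\varepsilon n$ new vertices fails for the dual reason: the gap $g(G)$ can still be as large as $(1-\alpha)n$, while there are at most $\eta n+1$ vertices of degree below $\Delta$, and a vertex of degree $\delta$ needs $g$ new neighbours, which it cannot acquire from only $\varepsilon n$ added vertices once $g>\varepsilon n$. The paper goes in the opposite direction: it \emph{removes} $\lceil g/2\rceil$ edge-disjoint spanning paths of $G-V_\delta$ (constructed via Theorem~\ref{thm:hamilton2}) whose internal vertices are the $V_\Delta$-vertices, thereby lowering the maximum degree to $\delta$ or $\delta+1$; only then does it add one or two new vertices together with a complementary matching (Lemma~\ref{lem:matching-in-complement}) to reach exact regularity before applying Theorem~\ref{thm2.13}. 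Your instinct that condition~(3) is the delicate case is correct, but the resolution is path-removal to bring $\Delta$ down to $\delta$, not vertex-addition to bring $\delta$ up to $\Delta$.
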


\begin{proof}[Proof of Theorem~{\ref{thm:expander}} Assuming Theorem~\ref{thm:thm-reduced}]
	
 Let $\tau^*=\tau(\alpha)$ be  defined as in Theorem~\ref{thm:expander}.
We choose an additional parameter   $\nu'$   such that  $$0<1/n \le 1/n_0  \ll \eta \ll \nu'\ll  \nu  \le \tau  \ll  \tau^* \ll  \alpha <1.$$

 If $G$ meets any of the conditions of Theorem~\ref{thm:thm-reduced},
 then we get $\la(G)\leq \lceil\frac{\Delta(G)+1}{2}\rceil$. Thus we
suppose that  $|U(G)| < \eta n$,   $g(G) \ge 3$,   $|W(G)| \ge 2$, and  $|V_\delta(G) \cup V_{\delta+1}(G)|<\eta n$.  We first show that $|U(G) \cup V_\delta(G) \cup V_{\delta+1}(G)|<\eta n$.

\begin{claim}\label{claim:size-U0-V0-Z0}
We have $|U(G) \cup V_\delta(G) \cup V_{\delta+1}(G)|<\eta n$.
\end{claim}
\proof
We have $|U(G)|<\eta n$ and  $|V_\delta(G) \cup V_{\delta+1}(G)|<\eta n$ by the assumption on $G$.
As $V_\delta(G) \cup V_{\delta+1}(G)\subseteq U(G)$ if $g(G)\ge \eta n+1$, and $U(G)\subseteq V_\delta(G) \cup V_{\delta+1}(G)$ if $g(G)< \eta n+1$, we have
$|U(G) \cup V_\delta(G) \cup V_{\delta+1}(G)|<\eta n$ if $g(G) \ge \eta n$.
If $g(G) < \eta n$, then we have $U(G)=\emptyset$  and so  $|U(G) \cup V_\delta(G) \cup V_{\delta+1}(G)| =|V_\delta(G) \cup V_{\delta+1}(G)|<\eta n$.
\qed

  We will remove
edge-disjoint paths that  each contain all vertices of $V_\Delta(G)$ as internal vertices and reduce $G$
to a graph $G'$ such that $\delta(G')=\delta(G)$ and $G'$ satisfies one of the three conditions of Theorem~\ref{thm:thm-reduced}.
Our goal is to apply Theorem~\ref{thm:hamilton2}.  Thus, we start with identifying edges  to be included in $F_i$ as
needed in the theorem.
  Let
  \begin{eqnarray*}
  W_0&=&W(G), \quad U_0=U(G), \quad  V_0=V_\delta(G), \\
  Z_0&=&V_{\delta+1}(G),\quad  G_0^*=G-(U_0\cup V_0 \cup Z_0),  \\
   g_0&=&g(G),  \quad \text{and} \quad d_0(v)=d_G(v)  \quad  \text{for each $v\in V(G)$.}
  \end{eqnarray*}
We perform the following algorithm.
For each  $i \ge 1$,
if $g_{i-1}  \ge 3$,  $|W_{i-1}| \ge 2$,  and $|U_{i-1} \cup V_{i-1} \cup Z_{i-1}|<\eta n$,
then we let
\begin{enumerate}[(a)]
	\item $x_i, y_i \in W_{i-1}$ be distinct;
	\item $d_i(v) = d_{i-1}(v)-2$ for each $v\in V(G_{i-1}^*)\setminus (V_{i-1} \cup Z_{i-1})$ with  $v\not\in\{x_i, y_i\}$,
	$d_i(v)  =d_{i-1}(v)-1$ for $v\in \{x_i,y_i\}$, and   $d_i(v)=d_G(v)$ for all other vertices $v$;
	\item $g_i=g_{i-1}-2$ and $W_i=\{v\in V(G) \mid   \delta(G)<d_i(v) < \Delta(G)-2i\}$;
	\item $U_i=\{v\in V(G) \mid   \Delta(G)-2i-d_i(v) \ge \eta  n\}$;
	\item $V_i=\{v\in V(G) \mid   d_i(v)  =\delta(G)\}$ and $Z_i=\{v\in V(G) \mid   d_i(v)  =\delta(G)+1\}$;
	\item  $G_i^*=G-(U_i \cup V_i \cup Z_i)$.
\end{enumerate}

Note that  $U_i\subseteq U_0$ and $V_0\cup Z_0 \subseteq V_i\cup Z_i$ for each $i$.
Although the graph $G_0^*$  does not contain any vertex of $V_0\cup Z_0$ initially, it can contain vertices of $V_i\cup Z_i$
 in some later steps.  When $g_0\ge \eta n$, it is possible that $V(G_0^*) \subseteq V(G_i^*)$ for some $i$; also in the process,
 it is possible that  $V(G_i^*) \subseteq V(G_0^*)$ when $V(G_0^*) \cap (V_i\cup Z_i) \ne \emptyset$.

Since $g_i$ is strictly decreasing with $i$, the algorithm above  will end
after  less than $\lfloor g(G)/2\rfloor$ steps.
Suppose that upon the completion of the algorithm, all the vertex pairs
we found are $x_1,y_1, \ldots, x_\ell, y_\ell$.
As the process cannot proceed any further,   we have  $g_\ell \le 2$ or  $|W_\ell| \le 1$ or $|U_\ell \cup V_\ell \cup Z_\ell|\ge \eta n$.
The graphs $G_i^*$  and $G_j^*$ might be distinct for distinct $i$ and $j$. In order to
apply Theorem~\ref{thm:hamilton2} on one single host graph, we match vertices of each $G_i^*$ that are not in $G_0^*$  for $i\ge 1$
to some vertices of $G_0^*$.

For each $i\in[1,\ell]$,  we select the following vertices from $G_0^*$ and also construct $F_i$ and $L_i$:
\begin{enumerate}[(i)]
	\item If $x_i\in V(G_0^*)$, let $x_i'=x_i$; otherwise, let $x_i'x_i\in E(G)$ with $x_i'\in V(G_0^*)\setminus (V_{i-1} \cup Z_{i-1})$ such that $x_i'x_i$ has not been used in previous steps (this is possible, since $|V(G_0^*)\setminus (V_{i-1} \cup Z_{i-1})| \ge (1-2\eta) n$ and $\delta(G) \ge \alpha n$);
	\item If $y_i\in V(G_0^*)$, let $y_i'=y_i$; otherwise, let $y_i'y_i\in E(G)$ with $y_i'\in V(G_0^*)\setminus (V_{i-1} \cup Z_{i-1})$ such that $y_i'y_i$ has not been used in previous steps and $y'_i \ne x_i'$;
	\item We match each vertex  $z$ of $V(G_i^*)\setminus V(G^*_0) $  to two distinct vertices $z_{i1}, z_{i2}$ of $V(G_0^*)\setminus (\{x_i',y_i'\}  \cup V_{i-1} \cup Z_{i-1})$   such that
	$z$ is adjacent in $G$ to both $z_{i1}, z_{i2}$,  $z_{i1}$ and $z_{i2}$ have not  been matched to any other vertex of $V(G_i^*)\setminus V(G^*_0) $,
	and that $zz_{i1}, zz_{i2}$ have not been used  in any previous steps (since  $V_0 \cup Z_0\subseteq V_i \cup Z_i$  and $U_i\subseteq U_0$ for each $i$, we know that $|V(G_i^*)\setminus V(G^*_{0})|\le |U_{0} \setminus U_i|<\eta n-1$. As  $\delta(G) \ge \alpha n$, vertices $z_{i1}, z_{i2}$ exist; also note that when $|V(G_i^*)\setminus V(G^*_{0})|>0$  happens, then we have $g_{0} \ge \eta n$);
	
	In the selection of vertices in (i)-(iii) above, we will make sure that each vertex $v\in V(G_0^*)$
	is used  in  at most $\sqrt{\eta}n$  steps. This is possible:
	for each $F_i$, we select at most $2+2(\eta n-1) =2\eta n$ distinct vertices of $V(G_0^*)\setminus (V_{i-1} \cup Z_{i-1})$;
 there are at most $3\sqrt{\eta} \ell \le 3\sqrt{\eta}(1-\alpha) n/2$ vertices of $V(G_0^*)\setminus (V_{i-1} \cup Z_{i-1})$
	that have been used  in at least  $\lfloor\sqrt{\eta} n \rfloor$ previous steps; and
	 	each vertex from   $\{x_i,y_i,z\}$
	 has in $G$ at least $(\alpha-2\eta) n$ distinct neighbors from $V(G_0^*)\setminus (V_{i-1} \cup Z_{i-1})$.
	 Since $(\alpha-2\eta) n -3\sqrt{\eta}(1-\alpha) n/2 >2\eta n$,
	we can find the desired vertices at Step $i$  from $V(G_0^*)\setminus (V_{i-1} \cup Z_{i-1})$  that are used
	at most $\lfloor\sqrt{\eta} n \rfloor -1$ times in the previous steps.
 	
		\item Let $F_i$  be the linear forest consisting of edges $x_i'y_i'$ and $z_{i1}z_{i2}$ for each $z\in V(G_i^*)\setminus V(G^*_0) $.   Since   $|V(G_i^*)\setminus V(G^*_{0})|\le |U_{0} \setminus U_i|<\eta n-1$,
		$e(F_i)<\eta n$  for each $i\in [1,\ell]$. Furthermore, by the
		comment on the selection of vertices before (iv), we know that
		 every vertex of $G_0^*$ is used by at most $\sqrt{\eta} n$ distinct $F_i$'s.
		\item For each $i \in [1,\ell]$, let $v_{i1},v_{i2} \in V(G_0^*)\setminus (V(F_i) \cup V_{i-1} \cup Z_{i-1})$ be distinct and such that, for any $v \in V(G_0^*)$, there exist at most two $(i,j) \in [1,\ell]\times[1,2]$ such that $v = v_{ij}$. For each $i \in [1,\ell]$, denote by $P_{i,1},\dots,P_{i,f_i}$ the components of $F_i$ and, for $j \in [1, f_i]$, denote by $u_{ij}$ and $w_{ij}$ the starting and ending vertices  of $P_{i,j}$.
		For each $i \in [1,\ell]$, let   $L_i$ be the union of
		\[
	\{ v_{i1}u_{i1}P_{i,1}w_{i1}u_{i2}P_{i,2}w_{i2}u_{i3}\ldots w_{i,f_i}v_{i2},v_{i2}v_{i1}\},\]
	and  $
	 \left((V_{i-1} \cup Z_{i-1})  \cap V(G_0^*)\right)\setminus \{x_i,y_i\}$ (those vertices
	 are  isolated vertices in $L_i$, and their existence implies $g_{i-1} \le \eta n+1$).

		Denote $L = \bigcup_{i \in [1,\ell]} L_i$. Note that $(L_1,F_1),\dots,(L_\ell,F_\ell)$ are layouts such that, for each $i \in [1,\ell]$, $V(L_i)\subseteq V(G_0^*)$, $|V(L_i)| \le 2e(F_i)+2+|V_{i-1} \cup Z_{i-1}| \le 3 \eta n$ and $e(L_i) \le 2e(F_i)+3< 3 \eta  n$. Moreover, for each $v \in V(G_0^*)$, $d_L(v) \le 2\sqrt{\eta} n+2 $ and there exist at most $ \sqrt{\eta}  n +2+\eta n+1<2 \sqrt{\eta} n$ indices $i \in [1,\ell]$ such that $v \in V(L_i)$ (when $(V_{i}\cup Z_{i}) \cap V(G_0^*) \ne \emptyset$, we have $g_i \le \eta n+1$. Thus a vertex in $V_{i}\cup Z_{i} $ can be used by at most $\eta n$ other $L_j$'s for $j \ge i$).
\end{enumerate}

Let $\alpha^*= \Delta(G)/n$.   As $d_G(v) \ge (\alpha^*-\eta)n$ for any $v\in V(G_0^*)$,
  by Claim~\ref{claim:size-U0-V0-Z0},  $G_0^*$  is
  $(\alpha^*, 2\eta)$-almost regular.
Furthermore, $G_0^*$  is  a  robust $(\nu-\eta, 2\tau)$-expander by Lemma~\ref{lem:stability-expansion}.
 Applying Theorem~\ref{thm:hamilton2} with
 $G_0^*$, $\alpha^*$,  $\nu-2\eta$, $2\tau$, and $\alpha$ playing the roles of $G,\alpha,\nu,\tau$, and $\eta^*$,
 and with  $\ve^*=2\eta$ and $\ve= \eta^{1/7}$,
 we find  edge-disjoint  subgraphs $C_1, \ldots, C_\ell$  of  $G_0^* \cup (\bigcup_{i=1}^\ell F_i)$ such that  $C_i$ is a
 spanning configuration of shape $(L_i,F_i)$ and
 $E(F_i)\subseteq E(C_i)$.

 For each  $i\in [1,\ell]$,  for $x_i'y_i'\in E(F_i)\cap E(C_i)$, we delete $x_i'y_i'$ from $C_i$, and  add the edges  $x_i'x_i$ (if $x_i\ne x_i'$) and $y_i'y_i$ (if $y_i\ne y_i'$) to the
 resulting path; for any other edge $z_{i1}z_{i2}$ of $F_i$, we let $z\in V(G_i^*)\setminus V(G^*_0) $ such that $z$
 was matched to $z_{i1}, z_{i2}$ in step (iii) above, then we  replace $z_{i1}z_{i2}$  on  $C_i$  by the path
 $z_{i1}zz_{i2}$.  Denote by $P_i$ the resulting path. Then $P_i$ is a Hamilton $(x_i,y_i)$-path of $G[V(G_i^*) \cup \{x_i,y_i\}]$.
  Let $G'=G-\bigcup_{i=1}^{\ell}E(P_i)$.    As $G^*_0-\bigcup_{i=1}^\ell E(C_i)$ is a robust $(\nu'/2,8\tau)$-expander by Theorem~\ref{thm:hamilton2},
  $G'$ is a robust $(\nu'/2-\eta,16\tau)$-expander by Lemma~\ref{lem:stability-expansion}.  As $\tau \ll \tau^*$, $G'$ is also a robust $(\nu'/2-\eta,\tau^*)$-expander.
Note that   $g(G')=g_\ell$, $W(G')=W_\ell$,  $U(G') =U_\ell,   V_\delta(G') =V_\ell$,  $V_{\delta+1}(G')=Z_\ell$,  $U(G')\subseteq U(G)$, and $\delta(G')=\delta(G) \ge \alpha n$.
Since  $|U(G')| \le | U(G)|<\eta n$, and  ($g_\ell \le 2$ or  $|W_\ell| \le 1$ or $|U_\ell\cup V_\ell \cup Z_\ell| \ge \eta n$),  we have one of the following three
possibilities:
\begin{enumerate}[(A)]
	\item  $|W_\ell| \le 1$. In this case, Theorem~\ref{thm:thm-reduced}(3) holds.
	\item $|W_\ell|  \ge 2$ and $g_\ell \le 2$. In this case, Theorem~\ref{thm:thm-reduced}(2) holds.
	\item $|W_\ell|  \ge 2$, $g_\ell  \ge 3$, and $|U_\ell\cup V_\ell \cup Z_\ell| \ge \eta n$.
	\begin{itemize}
		\item If $g_\ell \ge \eta n+1$, then we have $V_\ell \cup Z_\ell \subseteq U_\ell$ and so  $|U_\ell|=|U_\ell\cup V_\ell \cup Z_\ell|  \ge \eta n$. This gives a contradiction to
		$|U_\ell| \le | U(G)|<\eta n$.
		\item If $ \eta n\le g_\ell <\eta n+1$, then we have $U_\ell =V_\ell$ and so  $|V_\ell\cup Z_\ell|=|U_\ell\cup V_\ell \cup Z_\ell|  \ge \eta n$. In this case, Theorem~\ref{thm:thm-reduced}(1) holds.
		\item If $g_\ell <\eta n$, then $U_\ell=\emptyset$ and so  $|V_\ell\cup Z_\ell|=|U_\ell\cup V_\ell \cup Z_\ell|  \ge \eta n$. Since also $g_{\ell}\geq 3$, in this case, Theorem~\ref{thm:thm-reduced}(2) holds.
	\end{itemize}
\end{enumerate}
Applying  Theorem~\ref{thm:thm-reduced},
we get  $\la(G') \le \lceil \frac{1}{2}(\Delta(G')+1 )\rceil $.
Thus  $\la(G) \le \ell+\lceil \frac{1}{2}(\Delta(G')+1 )\rceil  =\lceil \frac{1}{2}(\Delta(G)+1 )\rceil $.
\qed

The following lemma will be used in the proof of Theorem~\ref{thm:thm-reduced} when the second condition is met.
	 For a positive integer $d$,
	the \emph{$d$-deficiency} of a vertex $v$ in $G$ is $\df_{G}(v,d)=\max\{d-d_G(v), 0\}$.

	\begin{lem}\label{lem:reduce-to-regular}
	Let $n\in \mathbb{N}$ and suppose that $0<1/n \ll \nu  \le  \tau \ll  \alpha <1$.  Suppose that
	$G$ is a   robust $(\nu, \tau)$-expander on $n$ vertices with $\delta(G)\geq \alpha n$, and that
		for some positive integer $d$,    $\df_G(v_1,d) \ge \ldots \ge \df_G(v_n, d)$, and
		$\df_G(v_1,d) \le  \nu^6 n+1$.  If   $\sum_{i=1}^n\df_G(v_i, d)$ is even and  $\df_G(v_1,d) \le \sum_{i=2}^n\df_G(v_i,d)$, then there exists some integer $\ell \le 3 \nu^2 n$ such that after removing
		$\ell$ edge-disjoint linear forests from $G$, we can get a graph $G^*$
		such that $d_{G^*}(v_i)=d_G(v_i)-2\ell+\df_G(v_i,d)$ for each $i\in [1,n]$, and $G^*$ is a robust
		$(\nu/2, \tau)$-expander with $\delta(G^*) \ge (\alpha -6 \nu^2 )n$.
	\end{lem}
	
	\proof By Theorem~\ref{thm:degree-sequence-multigraph},  there exists a multigraph $H$ on $V(G)$ such that $d_{H}(x_i)=\df_G(v_i,d)$ for each $i\in [1,n]$. The multigraph $H$ will aid us to find a desired subgraph $G^*$ of $G$. Note that $\Delta(H)=\df_G(v_1,d)=d-d_G(v_1)\leq \nu^6 n+1$.
	
	By Vizing's Theorem on chromatic index, we have $\chi'(H)\leq \Delta(H)+\mu(H)\leq 2\Delta(H)\leq 2\nu^6n+2$, where $\mu(H)$
	is the maximum number of edges joining two vertices in $H$. By further partitioning the edges  in each color class
	of a $(2\nu^6n+2)$-edge coloring of $H$ into $\nu^{-4}$ matchings,
	we can greedily partition the edges of $H$ into
	$$
	\ell \le \frac{2\nu^6n+2}{\nu ^{4}}  <3 \nu^2 n
	$$
	matchings $M_1,M_2,\ldots,M_\ell$ each of size at most
	\begin{eqnarray*}
		\frac{\frac{n}{2}}{\nu^{-4}} &=&\nu^4n/2.
	\end{eqnarray*}

	Now we take out linear forests from $G$ by applying Lemma~\ref{lem:hamiltonicity-of-expander} with $M_1,M_2,\ldots,M_\ell$ iteratively.
	More precisely, we define spanning subgraphs $G_0,G_1,\ldots, G_\ell$ of $G$ and edge-disjoint linear forests $F_1,F_2,\ldots,F_\ell$ such that
	
	\begin{enumerate}[(a)]
		\item $G_0:=G$ and $G_i:=G_{i-1}- E(F_i)$ for $i\geq 1$.
		\item $F_i$ is a spanning linear forest in $G_{i-1}$ whose leaves are precisely the vertices in $V(M_i)$.
	\end{enumerate}
	
	Let $G_0=G$ and suppose that for some $i\in [1,\ell]$, we have already defined $G_0,G_1,\ldots,G_{i-1}$ and $F_1,\ldots,F_{i-1}$. Since $\Delta(F_1\cup\ldots \cup F_{i-1})\leq 2(i-1)\leq 2(\ell-1)\leq 6\nu^2n$, it follows that  $G_{i-1}$ is still
	a robust $(\nu-6\nu^2, \tau)$-expander and so is a robust $(2^{-1/4} \nu, \tau)$-expander. Since  $M_i$ has size at most $\nu^4 n/2$,  we can apply Lemma~\ref{lem:hamiltonicity-of-expander} to $G_{i-1}$ and $M_i$   to obtain
	a spanning linear forest $F_i$ in $G_{i-1}$ whose leaves are precisely the vertices in $V(M_i)$.
	Let $G_i=G_{i-1}-E(F_i)$.
	
	We claim that $G_\ell$ satisfies the degree constraint for Lemma~\ref{lem:reduce-to-regular}. Consider any vertex $u\in V(G_\ell)$. For every $i\in [1,\ell]$, $d_{F_i}(u)=1$ if $u$ is an endvertex of some edge of $M_i$ and $d_{F_i}(u)=2$ otherwise. Since $M_1,M_2,\ldots,M_\ell$ partition $E(H)$, we know that $\sum\limits_{i=1}^{\ell}d_{F_i}(u)=2\ell-d_{H}(u)=2\ell-\df_G(u,d)$. Thus $$d_{G_{\ell}}(u)=d_{G}(u)-\sum\limits_{i=1}^{\ell}d_{F_i}(u)=d_{G}(u)-2\ell+\df_G(u,d).$$
	Furthermore $\delta(G_\ell) \ge (\alpha -6 \nu^2 )n$.
	Thus letting $G^*=G_\ell$ gives us the desired subgraph.
	\qed

	\proof[Proof of Theorem~\ref{thm:thm-reduced}]
Let 	$\tau^*= \max\{\tau(\alpha/2), \tau(\alpha), \tau(\alpha-6\nu^2) \}$, where $\tau(\cdot)$  is  defined in Theorem~\ref{thm2.13}.
We choose  an additional parameter    $\nu'$  such that  $$0<1/n \le 1/n_0  \ll \eta \ll \nu'\ll  \nu  \le \tau  \ll    \tau^* \ll  \alpha <1,$$
where
$n_0 \ge \max\{N_0(\alpha/2,   \eta^2, \tau^*), N_0(\alpha-6\nu^2,   \nu'/3, \tau^*), N_0(\alpha,   \nu/2, \tau^*) \}$
	satisfying  $0< 1/n_0   \ll \eta$
	and $N_0(\cdot)$ is  defined in Theorem~\ref{thm2.13}.

	Let  $G$ be a robust $(\nu,\tau)$-expander   on $n\ge n_0$ vertices with $\delta(G) \ge \alpha n$, and
	 let
	 $$
	 \Delta=\Delta(G), \quad \delta=\delta(G),  \quad V_\delta =V_\delta(G),  \quad V_{\delta+1}=V_{\delta+1}(G), \quad W=W(G), \quad g=g(G).
	 $$
Additionally, define  $$U =\{v\in V(G)\mid \Delta(G)-d_G(v) \ge \eta n\}.$$
	
 For any two vertices of degree less than $\Delta$, if they are not adjacent in
	 $G$, we add an edge joining them to $G$. By iterating this process, we thus assume that
	 all vertices with degrees less than $\Delta$ are mutually adjacent in $G$. Thus,
	by Lemma~\ref{VDelta}, we have  $|V_\Delta| >\frac{n}{2}$.
	 Since $W\cup V_\delta$ is  a clique in $G$,  we have $\delta \ge   |W\cup V_\delta|-1$.
	 Thus
	 \begin{equation}\label{eqn:V_D-size}
	 |V_\Delta|  >  \max\{n/2, n-\delta-2\}.
	 \end{equation}
	
	   If $G$ is regular, then we are done by Theorem~\ref{cor:regular-expander}. Thus we assume that $G$ is not regular and so $g\ge 1$.
	 This together with the fact that $W\cup V_\delta$ is a clique in $G$ imply
	 \begin{equation}\label{eqn:V_D-size2}
	 	\Delta \le n-2.
	 \end{equation}

	We  let  $V(G)=\{v_1, \ldots, v_n\}$, and
	 we assume, without loss of generality, that $$d_G(v_1) \le \ldots \le d_G(v_n).$$
	 We  proceed with  the proof according to the three given cases.
	
	 {\bf \noindent Case 1: $|U|\geq \eta n$ or  ($g \ge  \eta n$ and $|V_\delta\cup V_{\delta+1}|  \ge  \eta n$).}

	 If $\Delta$ is odd, we delete a perfect matching $M$ from $G$ if $n$ is even, and we delete a matching covering all
	 vertices of $V(G)\setminus \{v_1\}$ if $n$ is odd, where recall that we assumed  $d_G(v_1)=\delta$ and $M$ exists as $G$ is Hamiltonian by Lemma~\ref{lem:hamiltonicity-of-expander} with $k=1$. Thus in the following,  we let $\Delta^*=\Delta$ and $G^*=G$  if $\Delta$
	 is even,  and $\Delta^*=\Delta-1$ and $G^*=G-M$  if $\Delta$ is odd. Note that $\sum_{i=1}^n \df_{G^*}(v_i,\Delta^*) \ge (\sum_{i=1}^n \df_G(v_i, \Delta)) -1$,  here we have this extra ``$-1$''
	 in the bound
	 since when both $\Delta$ and $n$ are odd, we have $\Delta^*-d_{G^*}(v_1) =\Delta-d_G(v_1)-1$.
	  We will add vertices to $G^*$ and construct a simple $\Delta^*$-regular graph $H$ in three steps such that $G^*\subseteq H$, and $H$ is a robust $(\eta^2,2\tau)$-expander.

	 Let $\df(G^*)=\sum_{i=1}^n \df_{G^*}(v_i,\Delta^*)$. Then by the condition of this case and the way of reducing $G$ to $G^*$,
	 we have  $\df(G^*)\ge  \eta n(\eta n-1)-1 \ge \eta^2 n^2 /2  \ge \Delta^*+5$.
	 We add a set $X$ of new vertices to $G^*$ such that $|X| \ge \frac{\Delta^*+4}{2}$ is the smallest integer satisfying the following properties:
	 \begin{enumerate}
	 	\item[(i)] $|X|\equiv n\ (\mbox{mod}\ 2)$;
	 	\item[(ii)] $|X|\geq \Delta^*-\lfloor\frac{{\rm def}(G^*)}{|X|}\rfloor+4$.
	 \end{enumerate}
	
	 Note that when $|X|=\Delta^*+4$ or $|X|=\Delta^*+5$, the conditions above are satisfied. Thus a smallest integer $|X|$ satisfying the conditions above
	 exists and $|X|\leq \Delta^*+5\leq n+4$.
	
	Let $\delta^*=\delta(G^*)$.
	 We claim  that   $(\Delta^*+4)^2 -4\df(G^*)> (\Delta^* - 2\delta^*)^2$. To see this,
	 we have
	 $\df(G^*) \le (\Delta^*-\delta^*) \times |W\cup V_\delta| < (\Delta^*-\delta^*) (\delta^*+2)$ by $|V_\Delta|  >  \max\{n/2, n-\delta-2\}$ from~\eqref{eqn:V_D-size} (the set of maximum degree vertices of $G$ and $G^*$ stay the same) and so
	 \begin{eqnarray}
	(\Delta^*+4)^2 -4\df(G^*)  &\ge &  (\Delta^*+4)^2 -4\Delta ^* \delta^*+4 (\delta^*)^2  -8(\Delta^*-\delta^*)\nonumber  \\
	&= & (\Delta^* - 2\delta^*)^2+8\delta^*+16 >0. \label{eqn:X-size}
	 \end{eqnarray}
Now from (ii) that $|X|\geq \Delta^*-\lfloor\frac{{\rm def}(G^*)}{|X|}\rfloor+4$, we get
$|X|^2-(\Delta^*+4)|X| +\df(G^*) \ge 0$. As $|X| \ge \frac{\Delta^*+4}{2}$ and~\eqref{eqn:X-size}, solving
the quadratic inequality, we get
\begin{eqnarray}
|X| &\ge& \frac{\Delta^*+4 +\sqrt{(\Delta^*+4)^2 -4\df(G^*)}}{2}  \nonumber \\
	 &>& \frac{\Delta^*+4 +\sqrt{(\Delta^* - 2\delta^*)^2}}{2} \nonumber  \\
	 	&>& \Delta^*-\delta^*, \label{eqn:X-size2}
\end{eqnarray}
where the last inequality above holds regardless  whether  $\Delta^*-2\delta^*\ge 0$.

Let $d=\Delta^*-(\lceil\frac{{\rm def}(G^*)}{|X|}\rceil-1)$.
We also claim that $d-1  \ge  |X|-6$. If $|X| <  \frac{\Delta^*}{2}+4$, then as $|X| \ge  \frac{\Delta^*+4}{2}$ and  $\df(G^*) < \frac{1}{4}(\Delta^*+4)^2$, we have $d-1=\Delta^*-\lceil\frac{{\df}(G^*)}{|X|}\rceil+1-1\geq\frac{\Delta^*}{2}-2>|X|-6$.
If $|X|  \ge   \frac{\Delta^*}{2}+4$,  then by the  minimality  requirement
on the size of $X$, we know that $|X|-2< \Delta^*-\lfloor\frac{{\rm def}(G^*)}{|X|}\rfloor+4 $ and so  $|X|-6\le \Delta^*-\lceil\frac{{\df}(G^*)}{|X|}\rceil+1-1=d-1$.

	 Suppose that ${\rm def}(G^*)\equiv \ell\ (\mbox{mod}\ |X|)$, where $\ell\in[0, |X|-1]$. Let $d_1=\ldots= d_{\ell}=\Delta^*-\lceil\frac{{\rm def}(G^*)}{|X|}\rceil=d-1$, and $d_{\ell+1}=\ldots=d_{|X|}=\Delta^*-(\lceil\frac{{\rm def}(G^*)}{|X|}\rceil-1)=d$. Since $|X| \equiv n\ (\mbox{mod}\ 2)$, we know that
	 \begin{equation*}\sum\limits_{i=1}^{|X|}d_i=\Delta^*|X|-{\rm def}(G^*)\equiv \Delta^*n-{\rm def}(G^*)=2e(G^*)\equiv 0\ (\mbox{mod}\ 2).
	 \end{equation*}
	
	 Furthermore, we have
	 \begin{equation*}
	 	d=\Delta^*-\left(\left\lceil\frac{{\df}(G^*)}{|X|}\right\rceil-1 \right) \ge \Delta^*-\frac{(\Delta^*+4)^2/4}{(\Delta^{*}+4)/2}= \Delta^*-\frac{\Delta^*+4}{2}>2.
	 \end{equation*}
	 Let $X=\{x_1,\ldots,x_{|X|}\}$. By Lemma~\ref{graphic}, there is a graph $R$ on $X$ such that $d_{R}(x_i)=d_i$ for each $i\in [1,|X|]$.
	 We now add ${\rm{def}}(G^*)$ edges joining vertices from $V(G^*)\setminus V_{\Delta}$ to $X$. We list these ${\rm{def}}(G^*)$ edges as $e_1, e_2,\ldots, e_{{\rm{def}}(G^*)}$ and add them one by one such that
	 \begin{enumerate}
	 	\item[(a)] For each vertex $v\in V(G^*)\setminus V_{\Delta}$, the ${\rm def}_{G^*}(v)$ edges joining $v$ with the vertices of $X$ are listed consecutively in the ordering above.
	 	\item[(b)] For each $x_i\in X$, $x_i$ is incident with $\Delta^*-d_i$ edges for each  $i\in [1,|X|]$.
	 \end{enumerate}

	 The resulting multigraph is denoted by $H$. Then $H$ is $\Delta^*$-regular by the above construction. The multigraph $H$ is actually simple, since    $|X|>\Delta^*-\delta^*$ by~\eqref{eqn:X-size2}.
	 Let $|V(H)|=n+|X|=n'$.  By  $|X| \le n+4$ and $\Delta^*\ge \delta+\eta n-1>\alpha n+2$, it follows that $\Delta^* >\alpha n'/2$.
	 In the rest of the proof, we mainly show that $H$ is a robust $(\eta^2,2\tau)$-expander.
	 Let $S\subseteq V(H)$ with  $2\tau n'\leq |S|\leq (1-2\tau)n'$. We show that $|RN_{H}(S)|\geq |S|+\eta^2 n'$.
	
	 Consider first that $|S\cap V(G^*)|\geq \tau n$.
	 Since $G^*$ is  a  robust $(\nu/2,\tau)$-expander implies it is a  $(5\eta^2,\tau)$-expander, we have $|RN_{G^*}(S)|\geq |S\cap V(G^*)|+5\eta^2 n$.
	 If $|S\cap X| \le  \eta^2 n'+6$, then
	 \begin{equation*}\begin{aligned}|RN_{H}(S)|&\geq|RN_{G^*}(S)|\geq |S\cap V(G^*)|+|S\cap X|+2.5\eta^2 n\\&\geq |S|+\eta^2 n',
	 	\end{aligned}
	 \end{equation*}
	 as  $|X| \le n+4$ and so $n'\le 2n+4$.
	 Thus we
	 assume that $|S\cap X|>\eta^2 n'+6$. Then  as $\delta(R)=d-1$ and    $d-1 \ge  |X|-6$,
	 we get $X\subseteq RN_H(S)$.
	 Therefore,
	 \begin{equation*}
	 	|RN_{H}(S)|\geq |RN_{G^*}(S)|+|X\cap S|\geq|S\cap V(G^*)|+5\eta^2 n+|X\cap S|\geq |S|+\eta^2 n'.
	 \end{equation*}
	
	 Consider then that $|S\cap V(G^*)|< \tau n$. As $|S|\geq 2\tau n'$, we have that $|S\cap X|> \tau  n>\eta^2 n'+6$. Again, as $\delta(R)=d-1  \ge  |X|-6$, we have $X\subseteq RN_{H}(S)$.
	
	 Assume for now  that $|S\cap V(G^*)|>\eta n/2$.  Then as $\delta(G^*)  \ge \alpha n-1$, we get
	 \begin{eqnarray*}
	 	|S\cap V(G^*) |	 (\alpha n-1) &\le&   \sum_{v\in S\cap V(G^*)} d_{G^*}(v)  \\
	 	&\le & |RN_{G^*}(S\cap V(G^*))| \times |S\cap V(G^*) |	 + \eta^2 n' \times n.
	 \end{eqnarray*}
	 This gives $ |RN_{G^*}(S\cap V(G^*))|   \ge (\alpha n-1)- \frac{\eta^2 n' \times n}{|S\cap V(G^*) |}  \ge (\alpha n-1) -5 \eta n > \tau n+\eta^2 n'$.
	 Thus
	 $$|RN_{H}(S)|\geq |X|+|RN_{G^*}(S\cap V(G^*))| \geq |X\cap S|+ \tau n+\eta^2 n'\geq |S|+\eta^2 n'.$$

	 Assume then that $|S\cap V(G^*)| <\eta n/2$.
	 If $|X\backslash S|\geq \eta^2 n'+\eta n/2$, then
	 $$|RN_{H}(S)|\geq |X|=|S|-|S\cap V(G^*)|+|X\setminus S|\geq |S|-\eta n/2+\eta^2 n'+\eta n/2=|S|+\eta^2 n'.$$
	 Therefore, we have $|X\backslash S|< \eta^2 n'+\eta n/2$. Let $U^*=U$ if $|U| \ge \eta n$ and $U^*=V_\delta \cup V_{\delta+1}$ otherwise.
Since every vertex from $U^*$ has in $H$ at least $\eta n-1$ neighbors   from $X$,
	 we then know that   $U^{*}\subseteq RN_H(S)$ and so
	 $$|RN_{H}(S)|\geq |X|+|U^*|\geq |X\cap S|+|V(G^*)\cap S|+\eta n/2\geq |S|+\eta^2 n'.$$

	 Thus $H$ is a $\Delta^*$-regular  robust $(\eta^2,2\tau)$-expander, where $\Delta^* > \alpha n'/2$ is even.
	 As $\tau  \ll \tau^*$, $H$ is a robust $(\eta^2, 2\tau)$-expander implies that $H$ is  a robust $(\eta^2, \tau^*)$-expander.
	 As $n$ is taken to be at least  $N_0(\alpha/2,   \eta^2, \tau^*)$  as defined in Theorem~\ref{thm2.13}, the theorem  implies that  $H$ has a Hamilton decomposition into $ \Delta^*/2 $ Hamilton  cycles.
	 By deleting the edges incident with vertices of $X$ from the decomposition, we obtain a decomposition of $G^*$ into $ \Delta^*/2 $ linear forests.  If $\Delta=\Delta^*$, we get $\la(G) \le \Delta/2$.
	 Otherwise, we get $\la(G) \le 1+(\Delta-1)/2 =(\Delta+1)/2$.

	 {\bf \noindent Case 2:  ($|U| < \eta n $, $g \le 2$,  and $|W| \ge 2$) or ($1<g<\eta n$ and $|V_\delta \cup V_{\delta+1}| \ge \eta n$).}

	 Since  $d_G(v_1) \le \ldots \le  d_G(v_n)$,  we have
	 $\df_G(v_1, \Delta) \ge \ldots  \ge \df_G(v_n, \Delta)$.
	 If $\sum_{i=1}^n\df_G(v_i, \Delta)$ is even and  $\df_G(v_1, \Delta) \le \sum_{i=2}^n\df_G(v_i, \Delta)$,
	 applying Lemma~\ref{lem:reduce-to-regular} with $d=\Delta$, we can reduce $G$ into a regular graph  $G^*$ with degree $\Delta-2\ell$
	 by taking out $\ell\le 3\nu^2n$ edge-disjoint linear forests.  Then $G^*$ is still a  robust $(\nu/2,\tau)$-expander  and so a  robust $(\nu/2,\tau^*)$-expander  with $\delta(G^*) \ge (\alpha-6\nu^2)n$.
	 By Theorem~\ref{cor:regular-expander}, we have
	 $\la(G^*) \le  \lceil (\Delta-2\ell+1)/2 \rceil $. Thus  $\la(G) \le  \ell+\lceil (\Delta-2\ell+1)/2 \rceil =\lceil (\Delta+1)/2 \rceil $.

	 Thus we assume that $\sum_{i=1}^n\df_G(v_i, \Delta)$ is odd or $\df_G(v_1, \Delta) > \sum_{i=2}^n\df_G(v_i, \Delta)$.
	If $|V_\delta \cup V_{\delta+1}|  \ge \eta n$,  as $g>1$, then we always have
	 $\df_G(v_1, \Delta)  \le   \sum_{i=2}^n\df_G(v_i, \Delta)$.  If $|V_\delta \cup V_{\delta+1}| <\eta n$, $g\le 2$, and $|W| \ge 2$,
	 then as $\df_G(v_1, \Delta)\leq2$,  and $ \sum_{i=2}^n\df_G(v_i, \Delta) \ge \sum_{w\in W}\df_G(w, \Delta) \ge 2$, it follows that $\df_G(v_1, \Delta) \le \sum_{i=2}^n\df_G(v_i, \Delta)$.
	 Therefore,   $\sum_{i=1}^n\df_G(v_i, \Delta)$ is odd and $\df_G(v_1, \Delta)  \le  \sum_{i=2}^n\df_G(v_i, \Delta)$.
	 Since  $\sum_{i=1}^n\df_G(v_i, \Delta)=n\Delta -2e(G)$, it follows that both $n$ and $\Delta$ are odd.
	 Since $\sum_{i=1}^n\df_G(v_i, \Delta)$ is odd  and $\df_G(v_1, \Delta) \le  \sum_{i=2}^n\df_G(v_i, \Delta)$, we know
	 $\df_G(v_3, \Delta) >0$.  Let  $P$ be a Hamilton $(v_1,v_n)$-path in $G$ by Lemma~\ref{lem:hamiltonicity-of-expander}.
	 Let $G_1=G-E(P)$. Then we have  $d_{G_1}(v)=d_G(v)-2$ for $v\in V(G_1)$ with $v\not\in \{v_1, v_n\}$, $d_{G_1}(v_1)=\delta-1$ and $d_{G_1}(v_n)=\Delta-1$. Thus  $\Delta(G_1)=\Delta-1$ and  $ \delta-2\le \delta(G_1) \le \delta-1$.
	
	 For every vertex $v\in V_\Delta$, by the definition, we have $\df_{G_1}(v, \Delta-2)=\df_G(v, \Delta)=0$;
	 for every vertex $v\in V(G)\setminus V_\Delta$ with $v\ne v_1$, we have $\df_{G_1}(v, \Delta-2)=\df_G(v, \Delta)$,
	 and $\df_{G_1}(v_1, \Delta-2)=\df_G(v_1, \Delta)-1=\Delta-\delta-1=g-1<\eta n-1$. Therefore, we get
	 $\sum_{i=1}^{n} \df_{G_1}(v_i, \Delta-2)=\sum_{i=1}^n\df_G(v_i, \Delta) -1$ and so $\sum_{i=1}^{n} \df_{G}(v_i, \Delta-2)$
	 is even.  We  next show that the largest value among  $\df_{G_1}(v_1, \Delta-2), \ldots, \df_{G_1}(v_n, \Delta-2)$ is at most
	 the sum of the rest values.
	 If $\df_{G_1}(v_1, \Delta-2)\ge\df_{G_1}(v_2, \Delta-2)$,  then $\df_{G_1}(v_1, \Delta-2)$ is a largest value
	 among $\df_{G_1}(v_1, \Delta-2), \ldots, \df_{G_1}(v_n, \Delta-2)$. Then
	 we have $\df_{G_1}(v_1, \Delta-2) <\df_G(v_1, \Delta) \le  \sum_{i=2}^n\df_G(v_i, \Delta)=\sum_{i=2}^{n} \df_{G_1}(v_i, \Delta-2)$.
	 Otherwise, we have  $\df_{G_1}(v_1, \Delta-2)<\df_{G_1}(v_2, \Delta-2)$.
	 Then we have $\df_{G_1}(v_2, \Delta-2) =\df_{G_1}(v_1, \Delta-2)+1$
	 as $\df_{G_1}(v_1, \Delta-2)+1=\df_G(v_1, \Delta) \ge \df_G(v_2, \Delta)=\df_{G_1}(v_2, \Delta-2)$.  In this case, $\df_{G_1}(v_2, \Delta-2)$ is a  largest value among
	 $\df_{G_1}(v_1, \Delta-2), \ldots, \df_{G_1}(v_n, \Delta-2)$.
	 Since $\df_{G_1}(v_3, \Delta-2)=\df_G(v_3, \Delta) >0$, we have
	 $\df_{G_1}(v_2, \Delta-2)\le \df_{G_1}(v_1, \Delta-2)+\sum_{i=3}^{n} \df_{G_1}(v_i, \Delta-2)$.
	 Furthermore, we have $\delta(G_1) \ge \delta(G)-1 \ge\alpha ^*n$ where $\alpha^* \ge 0.99\alpha $,  $\df_{G_1}(v_1, \Delta-2) \le \Delta-\delta +1 <\eta n+1$,  and  $\df_{G_1}(v_2, \Delta-2) <\eta n$.
	 Therefore, the graph $G_1$ and sequence of integers $\df_{G_1}(v_1, \Delta-2), \ldots, \df_{G_1}(v_n, \Delta-2)$
	 satisfy  the conditions  of  Lemma~\ref{lem:reduce-to-regular}.

	 Now applying Lemma~\ref{lem:reduce-to-regular} with  $G_1$ in the place of $G$
	 and with $d=\Delta-2$, we can reduce $G_1$ into a  graph  $G^*$
	 by taking out $\ell \le 3 \nu^2 n$ edge-disjoint linear forests such that for any $v\in V(G_1)$, we
	 have $d_{G^*}(v)=d_{G_1}(v)-2\ell+\df_{G_1}(v, \Delta-2)$.
	 For any $i\in [1,n-1]$, as $d_{G_1}(v_i) \le \Delta-2$, it follows that $d_{G_1}(v_i)+\df_{G_1}(v_i, \Delta-2)=\Delta-2$.
	 Thus all vertices except $v_n$ of $G^*$ have
	 degree $\Delta-2-2\ell$  and $v_n$ has degree  $\Delta-1-2\ell$, and $G^*$ is a robust $(\nu/2,\tau)$-expander.

	 Applying Lemma~\ref{lem:matching-in-complement}, $\overline{G^*}$  has a matching
	 excluding the vertex $v_n$ and covering at least  $n- \frac{n}{n-(\Delta-2-2\ell)}-3$
	 vertices of $G^*$.  Since $\ell \le  3 \nu^2 n$ and $\Delta \ge \alpha n$,
	 we have $(\Delta-2-2\ell)+1>\frac{n}{n-(\Delta-2-2\ell)}+2$, and so
	 $n- \frac{n}{n-(\Delta-2-2\ell)}-3>n-1-((\Delta-2-2\ell)+1)$.
	 Thus $\overline{G^*}$ has a matching $M$
	 covering exactly $n-1-((\Delta-2-2\ell)+1)$ vertices excluding the vertex $v_n$.
	 We let $G_2$ be obtained from $G^*+M$ by adding a new vertex $x$ and all the edges
	 joining $x$ and vertices from $V(G^*)\setminus (V(M) \cup \{v_n\})$.
	 The graph $G_2$ is $((\Delta-2-2\ell)+1)$-regular and is a  robust $(\nu/2,\tau)$-expander and so a robust $(\nu/2,\tau^*)$-expander.
	 Thus $G_2$  has a Hamilton decomposition by Theorem~\ref{thm2.13}.
	 Deleting all the edges of $G_2$ incident with $x$    and all the edges of $M$ from the decomposition gives
	 a linear forest decomposition of $G^*$ into $\lceil((\Delta-2-2\ell)+1)/2\rceil$ linear forests. Thus
	 $\la(G)  \le 1+\ell+ \lceil((\Delta-2-2\ell)+1)/2\rceil=\lceil(\Delta+1)/2\rceil$, as desired.

	  {\bf \noindent Case 3: $|U|<\eta n$ and $|W|\le 1$.}

	We may assume  that $|V_\delta|<\eta n$.  Indeed,
	if $g\ge \eta n$, then we have $V_\delta \subseteq U$ and so $|V_\delta|<\eta n$; and if
 $g<\eta n$ and  $|V_\delta| \ge \eta n$, then we are done by Case 2, thus
$|V_\delta|<\eta n$ again.

		 As $g=\Delta-\delta\leq n-2-\delta$ by Inequality~\eqref{eqn:V_D-size2}, we have
	 $|V_{\Delta}|  \ge g+1$ by Inequality~\eqref{eqn:V_D-size}.  If $|W|=1$, let $w\in W$
	 and $g_0=d_G(w)-\delta$.   We will remove from $G$ some edge-disjoint paths
	 and reduce it to a graph whose maximum degree and minimum degree differ by exactly one.
	 Recall that by our definition of $W$, we have $V_\Delta =V(G)\setminus (V_\delta \cup W)$.
	
	 We will select  $2\lceil g/2\rceil$ vertices  $x_1,y_1,\ldots,x_{\lceil g/2 \rceil},y_{\lceil g/2\rceil}$
	 and  take a Hamilton path  of $G-V_\delta$  or $G-V_\delta -W$ between  $x_i$ and $y_i$ for each $i\in [1,\lceil g/2\rceil]$, where
	 we will let $w$ be exactly one of those selected vertices  if $|W|=1$ and $g_0$ is odd.   Each pair $x_i,y_i$
	 will  be the two endvertices of a path connecting them.
	 We
	 describe below how to  select those vertices.
	
	Let $\alpha^*=\Delta/n$.
	 As $|V_\delta|<\eta n$, $G-V_\delta$ is  a robust $(\nu-\eta,2\tau)$-expander by Lemma~\ref{lem:stability-expansion}.
	 Thus $G-V_\delta$
	 is Hamilton-connected by Lemma~\ref{lem:hamiltonicity-of-expander}.
	 If   $|W|=1$ and $g_0$ is odd,  we let $x_1=w$ and $y_1$ be any vertex
	 from $V_\Delta$, and let $P_1$ be a Hamilton $(x_1,y_1)$-path of $G-V_\delta$.
	 Otherwise,  we let $x_1, y_1\in V_\Delta$ be any two distinct vertices, and let
	 $P_1$ be a Hamilton $(x_1,y_1)$-path of $G-V_\delta-W$.   Denote $G_0=G-V_\delta-W-E(P_1)$.
	
	 Suppose  $P_1=u_1u_2 \ldots u_p$, where $p=n-|V_\delta|$ if $|W|=1$ and  $g_0$ is odd and $p=n-|V_\delta|-|W|$ otherwise, $u_1=x_1$ and $u_p=y_1$.
	 Note that $p \ge g+1$ and  $G_0$ is    an  $(\alpha^*,2\eta)$-almost regular robust $(\nu-2\eta,2\tau)$-expander by Lemma~\ref{lem:stability-expansion}.
	 For each $i\in [2,\lceil g/2 \rceil]$, we let
	 $x_i=u_{2(i-1)}$ and $y_i=u_{2(i-1)+1}$.  Let $\ell = \lceil g/2 \rceil-1$.

	 If $g_0\le 1$, for each $i\in [2, \lceil g/2 \rceil]$, we let $F_i=x_iy_i$ be a linear forest.
	 For each $i \in [2,\ell+1]$, let $v_{i1},v_{i2} \in V(G_0)\setminus V(F_i)$ be distinct and such that, for any $v \in V(G_0^*)$, there exist at most two $(i,j) \in [2,\ell+1]\times[1,2]$ such that $v = v_{ij}$.
	 For each $i \in [2,\ell+1]$, let
	 \[
	 L_i=\{ v_{i1}x_iy_iv_{i2},v_{i2}v_{i1}\}.\]
	 	 Denote $L = \bigcup_{i \in [2,\ell+1]} L_i$. Note that $(L_2,F_2),\dots,(L_{\ell+1},F_{\ell+1})$ are layouts such that, for each $i \in [2,\ell+1]$, $V(L_i)\subseteq V(G_0)$, $|V(L_i)| =2e(F_i)+2<\eta n$ and $e(L_i) \le 2e(F_i)+3<  \eta  n$. Moreover, for each $v \in V(G_0)$, $d_L(v) \le 1+2 <\eta  n$ and there exist at most $ 1+2<\eta n$ indices $i \in [2,\ell+1]$ such that $v \in V(L_i)$.

	Applying Theorem~\ref{thm:hamilton2} with
	$G_0$, $\alpha^*$,  $\nu-2\eta$, $2\tau$, and $\alpha$ playing the roles of $G,\alpha,\nu,\tau$, and $\eta^*$,
	and with $\ve^* =2\eta$,  $\ve= \eta^{1/5}$,  and $\ell = \lceil g/2 \rceil-1 \le \frac{1}{2}(\alpha^*-\alpha) n$,
	 we find $\ell$ edge-disjoint Hamilton cycles $C_2,  \ldots, C_{\ell+1}$  of  $G_0+\{x_iy_i\mid i\in [2, \lceil g/2 \rceil]\}$
	 such that $E(F_i)\subseteq E(C_i)$ (as $L_i$ has no isolated vertices, a spanning configuration of shape $(L_i,F_i)$ is a Hamilton cycle of $G_0 \cup F_i$). Let $P_i=C_i-x_iy_i$. Then $P_i$ is a Hamilton $(x_i,y_i)$-path of $G_0-\bigcup_{j=2}^{i-1} E(P_j)$.
	  By Theorem~\ref{thm:hamilton2},  $G_0- \bigcup_{i=2}^{ \lceil g/2 \rceil} E(P_i)$
	 is a robust $(\nu'/2, 8\tau)$-expander.

	 If $g_0 \ge 2$, for each $i\in [2, \lceil (g_0+1)/2 \rceil]$,  and for the vertex $w\in W$,
	 we let $w_{i1},w_{i2}$ be two distinct vertices of $V_\Delta\setminus \{x_i,y_i\}$ such that
	 they are adjacent in $G$ to $w$, and the edges $ww_{i1},ww_{i2}$ have not been used so far.
	 We let $F_i $ be the  linear forest consisting of edges $x_iy_i$ and $w_{i1}w_{i2}$.  (This step is to reduce
	 the  degree of the vertex $w$ to $\delta$).   For each $i\in [\lceil (g_0+1)/2 \rceil+1, \lceil g/2 \rceil]$, we let $F_i=x_iy_i$ be a linear forest.
	
	  For each $i \in [2,\ell+1]$, let $v_{i1},v_{i2} \in V(G_0)\setminus V(F_i)$ be distinct and such that, for any $v \in V(G_0)$, there exist at most two $(i,j) \in [2,\ell+1]\times[1,2]$ such that $v = v_{ij}$.
	 For each $i\in [2, \lceil (g_0+1)/2 \rceil]$, let
	 \[
	 L_i=\{ v_{i1}x_iy_iw_{i1}w_{i2}v_{i2},v_{i2}v_{i1}\}.\]
	 For each $i\in [\lceil (g_0+1)/2 \rceil+1, \lceil g/2 \rceil]$, let
	 \[
	 L_i=\{ v_{i1}x_iy_iv_{i2},v_{i2}v_{i1}\}.\]
	 Denote $L = \bigcup_{i \in [2,\ell+1]} L_i$. Note that $(L_2,F_2),\dots,(L_{\ell+1},F_{\ell+1})$ are layouts such that, for each $i \in [2,\ell+1]$, $V(L_i)\subseteq V(G_0)$, $|V(L_i)| \le e(F_i)+4<\eta n$ and $e(L_i) \le 2e(F_i)+3<  \eta  n$. Moreover, for each $v \in V(G_0)$, $d_L(v) \le 1+2 <\eta  n$ and there exist at most $ 1+2<\eta n$ indices $i \in [1,\ell]$ such that $v \in V(L_i)$.

	 Applying Theorem~\ref{thm:hamilton2} with
	 $G_0$, $\alpha^*$,  $\nu-2\eta$, $2\tau$, and $\alpha$ playing the roles of $G,\alpha,\nu,\tau$, and $\eta^*$,
	 and with  $\ve^*=2\eta$,  $\ve= \eta^{1/5}$,  and $\ell = \lceil g/2 \rceil-1 \le \frac{1}{2}(\alpha^*-\alpha) n$,
	    we find edge-disjoint Hamilton cycles $C_2, \ldots, C_{\ell+1}$
	  of $G_0 \cup (\bigcup_{i=2}^{\ell+1} F_i)$ such that  $E(F_i)\subseteq E(C_i)$.  For each $i\in  [2, \lceil (g_0+1)/2 \rceil]$, we let $P_i$ be obtained from $C_i$ by deleting $x_iy_i$ and replacing $w_{i1}w_{i2}$ by  $w_{i1}ww_{i2}$;
	  and for each $i\in [\lceil (g_0+1)/2 \rceil+1, \lceil g/2 \rceil]$, we let $P_i$ be obtained
	  from $C_i$ by deleting $x_iy_i$. Then $P_i$ is
	 a Hamilton  $(x_i,y_i)$-path of  $G-V_\delta-E(P_1)-\bigcup_{j=2}^{i-1} E(P_j)$ if $i\in  [2, \lceil (g_0+1)/2 \rceil]$, and is a Hamilton  $(x_i,y_i)$-path
 of $G_0-\bigcup_{j=2}^{i-1} E(P_j)$ otherwise.
	  Furthermore,  by Theorem~\ref{thm:hamilton2},  $G_0- \bigcup_{i=2}^{ \lceil g/2 \rceil} E(P_i)$
	  is a robust $(\nu'/2, 8\tau)$-expander.

	 Let $G_1=G- \bigcup_{i=1}^{ \lceil g/2 \rceil} E(P_i)$ and $Z=\{x_1,y_1,\ldots,x_{\lceil g/2 \rceil},y_{\lceil g/2\rceil} \}$.
	  Since $|V_\delta\cup W|<\eta n+1$ and $G_0- \bigcup_{i=2}^{ \lceil g/2 \rceil} E(P_i)$
	  is a robust $(\nu'/2, 8\tau)$-expander,
	 it follows   by Lemma~\ref{lem:stability-expansion}  that $G_1$  is a robust $(\nu'/3, 16\tau)$-expander.

	 By the construction above,
	 vertices of $V_\delta$ were not involved in the  process. Thus we have $d_{G_1}(v)=\delta$
	 for each $v\in V_\delta$. Every vertex $v$ of $V_\Delta\setminus Z$ was used as an internal vertex for
	 each of the path removed, thus we have $d_{G_1}(v)=\Delta -2\lceil g/2\rceil$.
	 If $|W|=0$, then every vertex of $Z$ was used as an endvertex of exactly one of the paths removed,
	 and as an internal vertex of all other paths, thus we have $d_{G_1}(v)=\Delta -2\lceil g/2\rceil+1$
	 for each $v\in Z$.
	 If $|W|=1$, then the vertex $w$ was  used exactly $\lceil g_0/2 \rceil$ times,
	 where exactly $\lceil (g_0+1)/2 \rceil-1$ times as an internal
	 vertex of  $P_2, \ldots, P_{\lceil (g_0+1)/2 \rceil}$, and   $\lceil g_0/2 \rceil+1-\lceil (g_0+1)/2 \rceil$
	 times as an endvertex of $P_1$ ($\lceil g_0/2 \rceil+1-\lceil (g_0+1)/2 \rceil=1$ if and only if $g_0$ is odd, and is 0 otherwise).
	 As a consequence, we have $d_{G_1}(w)=d_G(w)-2\lceil g_0/2 \rceil+\lceil g_0/2 \rceil+1-\lceil (g_0+1)/2 \rceil=\delta$.

	 As edges of $E(P_1)$ were removed from $G$ when getting $G_1$,   we know that $E(P_1)\subseteq E(\overline{G}_1)$.
	 Also, for $P_1$, we have  $w\in V(P_1)$ if and only if $g_0$ is odd, and $w=u_1$ in  this  case.
Since  $Z=\{u_1,u_p\} \cup \{u_{2(i-1)}, u_{2(i-1)+1}\mid i\in [2,\lceil g/2 \rceil]\}$,
	 we know that $ V(u_{2\lceil g/2\rceil}P_1u_{p-1} )\cap (Z\cup V_\delta \cup W) =\emptyset$.
	Since   $u_{2\lceil g/2\rceil}P_1u_{p-1}$  is a path, it has a perfect matching if its order is even, and
	has a matching that covers all but one vertices if its order is odd. Since
	$|V(u_{2\lceil g/2\rceil}P_1u_{p-1})| \ge n-|V_\delta|-|Z|-|W|$ and $u_{2\lceil g/2\rceil}P_1u_{p-1} \subseteq \overline{G_1}$, we then conclude that $\overline{G}_1$ has a matching $M_0$
	 that covers  at least $n-|V_\delta|-|Z|-2$ vertices  of $G_1$ excluding  vertices of $Z \cup V_\delta \cup W$.
	 We consider two cases below regarding the parity of $g$ to complete the proof.
	
	 \medskip
	 {\bf \noindent Case 3.1}: $g$ is even.
	 \medskip
	
	 Then  we have  $d_{G_1}(v)=\delta+1$ for every $v\in Z$ and $d_{G_1}(v)=\delta$ for every $v\in V(G)\setminus Z$.  Thus
	 $\Delta(G_1)=\delta+1$.  Since $g$ is even, we know that $\delta+1$ and $\Delta$ have different parities.

	 If $\Delta$ is odd, then $\delta+1$ is even.   As $G_1$ has  precisely $n-g$ vertices of  degree $\delta$, it follows that $n-g$ is even and so $n$  is even.
	 Also, we know that $n-g\ge \Delta+1-g =\delta+1$.
	 As $n-|V_\delta|-|Z|-2\geq n-\delta-g-1$ (since $|V_\delta|<\eta n$ and $\delta \ge \alpha n$), $\overline{G}_1$ has a matching  $M\subseteq M_0$ covering  exactly $n-g-(\delta+1)$ vertices not including  any vertex of $Z$.
	 Let $G_2$ be obtained from $G_1+M$ by adding a new vertex $x$ and all the edges joining $x$
	 and each vertex from $V(G_1)\setminus (Z\cup V(M))$. The graph $G_2$ is $(\delta+1)$-regular
	 and is a robust $(\nu'/3, \tau^*)$-expander.
	 Thus $G_2$  has a Hamilton decomposition by Theorem~\ref{thm2.13}.
	 Deleting all the edges of $G_2$ incident with $x$ and all the edges of $M$ from the decomposition gives
	 a linear forest decomposition of $G_1$ into $(\delta+1)/2$ linear forests. Thus
	 $\la(G) \le g/2+(\delta+1)/2 =(\Delta+1)/2$, as desired.

	 If $\Delta$ is even, then $\delta+1$ is odd.  If $n$ is odd, then  $n-g-(\delta+1)$ is even. We  proceed
	 exactly the same way above to obtain a $(\delta+1)$-regular graph $G_2$
	 and obtain $\la(G) \le g/2+(\delta+2)/2 =\lceil (\Delta+1) /2\rceil$, as desired.
	 Thus we assume that $n$ is even.  As $n-g-\delta-2 <n-|V_\delta|-|Z|-2$, $\overline{G}_1$  has a matching $M_1 \subseteq M_0$
	 covering  exactly  $n-g-\delta-2$ vertices  of $G_1$   avoiding  vertices of $Z\cup V_\delta \cup W$.
	 We let $G_2$ be obtained from $G_1+M_1$ by adding a new vertex $x$ and all the edges joining $x$
	 and each vertex from $V(G_1)\setminus (Z\cup V(M_1))$.  Then we have $d_{G_2}(x)=\delta+2$ and $d_{G_2}(v)=\delta+1$ for all $v\in V(G_2)$ with $v\ne x$.
	Applying Lemma~\ref{lem:matching-in-complement}, $\overline{G_2}$  has a matching
	excluding the vertex $x$ and covering at least  $n- \frac{n}{n-\delta-1}-3$
	vertices of $G_2$.  Since $\delta \le n-3$ by~\eqref{eqn:V_D-size2} and $g\ge 1$,
	we have $n- \frac{n}{n-\delta-1}-3 \ge n-\delta -2$.
	Thus $\overline{G_2}$ has a matching $M_2$
	covering exactly $n-\delta -2$ vertices excluding the vertex $x$.
	 Now we let
	 $G_3$ be obtained from $G_2+M_2$ by adding a new vertex $y$ and all the edges joining $y$
	 and each vertex from $V(G_2)\setminus (V(M_2) \cup \{x\})$.
	 The graph $G_3$ is  $(\delta+2)$-regular  and is a robust $(\nu'/3, \tau^*)$-expander.
	 Thus $G_3$  has
a Hamilton decomposition by Theorem~\ref{thm2.13}.
	 Deleting all the edges of $G_3$ incident with $x$ or $y$  and all the edges of $M_1\cup M_2$ from the decomposition gives
	 a linear forest decomposition of $G_1$ into $(\delta+2)/2$ linear forests. Thus
	 $\la(G)  \le g/2+(\delta+2)/2 =(\Delta+2)/2 =\lceil(\Delta+1)/2\rceil $, as desired.

	 \medskip
	 {\bf \noindent Case 3.2}: $g$ is odd.
	 \medskip
	
	 Then we have  $d_{G_1}(v)=\delta-1$ for each $v\in V_\Delta \setminus Z$, and $d_{G_1}(v) =\delta$
	 for each $v\in V_\delta \cup W\cup Z$.  Thus $\Delta(G_1)=\delta$.
	
	 If $\delta$ is even, then $\Delta$  is odd.  As $G_1$ has precisely $n-(g+1)-|V_\delta|-|W|$ vertices of  odd degrees, which are all $\delta-1$,
	 we know that  $n-(g+1)-|V_\delta|-|W|$ is even.
	 As $n-|V_\delta|-|Z|-2\geq n-(g+1)-\delta-|V_\delta|-|W|$, $\overline{G}_1$ has a matching  $M\subseteq M_0$ covering  exactly $n-(g+1)-\delta-|V_\delta|-|W|$ vertices not including  any vertex of $Z\cup W\cup V_\delta$.
	 Let $G_2$ be obtained from $G_1+M$ by adding a new vertex $x$ and all the edges joining $x$
	 and each vertex from $V(G_1)\setminus (Z\cup W\cup V_\delta \cup  V(M))$. The graph $G_2$ is $\delta$-regular
	  and is a robust $(\nu'/3, \tau^*)$-expander.
	 Thus $G_2$  has
a Hamilton decomposition by Theorem~\ref{thm2.13}.
	 Deleting all the edges of $G_2$ incident with $x$ and all the edges of $M$ from the decomposition gives
	 a linear forest decomposition of $G_1$ into $\delta/2$ linear forests. Thus
	 $\la(G) \le (g+1)/2+\delta/2 =(\Delta+1)/2$, as desired.

	 Next, consider that $\delta$ is odd and so $\Delta$ is even. Then $G_1$ has  precisely $g+1+|V_\delta|+|W|$ vertices of odd degrees, which are all $\delta$, it follows that $g+1+|V_\delta|+|W|$ is even. Thus if $n$ is even, then
	 $n-(g+1)-|V_\delta|-|W|$ is even.
	 We  proceed
	 exactly the same way above to obtain  a $\delta$-regular graph $G_2$. By Theorem~\ref{thm2.13},
	 we have $\la(G_2) \le (\delta+1)/2$. Thus $\la(G) \le (g+1)/2+(\delta+1)/2 =\lceil (\Delta+1)/2 \rceil$, as desired.

	 Thus we assume that $n$ is odd.  We claim that $p=n-|V_{\delta}|-|W|$ in this case; that is,
	 we cannot have  that $|W|=1$ and $g_0$ is odd.  This is true, since if $|W|=1$ and $g_0$ is odd,
	 then $d_G(w) =\delta+g_0$ is even. Since $\Delta$ is even, this would imply that $|V_\delta|$
	 must be even and so $|W|$ must be even (since $g+1+|V_\delta|+|W|$ is even), a contradiction
	 to $|W|=1$.
	Then as  $g+1+|V_\delta|+|W|$ is even,  we know that  $p=n-|V_{\delta}|-|W|$ is odd. As all the edges in $u_{2\lceil g/2\rceil}P_1u_{p-1}$ are contained in $\overline{G}_1$,
	 and $V(u_{2\lceil g/2\rceil}P_1u_{p-1})=V(G)\setminus (Z\cup V_\delta \cup W)$, we
	 let $M_1$ be a matching of $\overline{G}_1$ covering all the vertices of $V(u_{2\lceil g/2\rceil}P_1u_{p-1})\setminus \{u_{p-1}\}$.  Let $G_2$ be obtained from $G_1$ by adding all the edges of $M_1$
	 and the edge $u_{p-1}u_p$ from $\overline{G}_1$.
	 Then in $G_2$, all vertices other than $u_p$ have degree $\delta$ and $u_p$
	 has degree $\delta+1$.  The graph $\overline{G}_2$ has one single vertex of degree $n-1-(\delta+1)$
	 and all other vertices of degree $n-1-\delta$. By Lemma~\ref{lem:matching-in-complement}, $\overline{G}_2$
	 has a matching covering at least $n-n/(n-\delta)-3$ vertices avoiding  the vertex $u_p$.
	 As $n$ is odd and $\delta$ is odd, we have $\delta \le n-2$. As a consequence,
	 $n-n/(n-\delta)-3 >n-1-(\delta+1)$. Thus $\overline{G}_2$ has a matching $M_2$
	 covering exactly $n-1-(\delta+1)$ vertices excluding the vertex $u_p$.
	 We let $G_3$ be obtained from $G_2+M_2$ by adding a new vertex $x$ and all the edges
	 joining $x$ and vertices from $V(G_2)\setminus (V(M_2) \cup \{u_p\})$.
	 The graph $G_3$ is   $(\delta+1)$-regular  and is a robust $(\nu'/3, \tau^*)$-expander.
	 Thus $G_3$  has
 a Hamilton decomposition by Theorem~\ref{thm2.13}.
	 Deleting all the edges of $G_3$ incident with $x$    and all the edges of $M_1\cup M_2$ from the decomposition gives
	 a linear forest decomposition of $G_1$ into $(\delta+1)/2$ linear forests. Thus
	 $\la(G) \le (g+1)/2+(\delta+1)/2 =(\Delta+2)/2 =\lceil(\Delta+1)/2\rceil $, as desired.

The proof of Theorem~\ref{thm:thm-reduced} is now completed.
\end{proof}

\section*{Acknowledgments}
The authors are deeply grateful to the referee, who carefully read the paper, provided us with highly constructive comments, and guided us toward proving a more general result.

\bibliographystyle{abbrv}
\bibliography{arxiv}

\end{document}